\numberwithin{equation}{section}
\NewDocumentCommand\Crefnameitem { m m m O{\textup} O{(\roman*)}} {%
	\Crefname{#1enumi}{#2}{#3} 
	\AtBeginEnvironment{#1}{%
		\crefalias{enumi}{#1enumi}%
		\setlist[enumerate,1]{
			label={#4{#5}.},
			ref={#5}
		}%
	}  
}
\NewDocumentCommand\placeholder{}{\:\cdot\:} 
\NewDocumentCommand\NewPairedDelimiterS{mmm}{%
	\DeclarePairedDelimiterX{#1}[1]{#2}{#3}%
	{\ifblank{##1}{\placeholder}{##1}}%
}
\NewDocumentCommand\NewPairedDelimiterSS{mmmO{,}}{%
	\DeclarePairedDelimiterX{#1}[2]{#2}{#3}%
	{\ifblank{##1}{\placeholder}{##1}%
		#4%
		\ifblank{##2}{\placeholder}{##2}}%
}
\NewPairedDelimiterS\normord{\mathopen{:}}{\mathclose{:}} 
\langle\rangle[%
\newtheorem{thm}{Theorem}[section]
\newtheorem*{theorem*}{Theorem}
\newtheorem{prop}[thm]{Proposition}
\newtheorem{lm}[thm]{Lemma}
\newtheorem{coro}[thm]{Corollary}
\newcommand\numberthis{\addtocounter{equation}{1}\tag{\theequation}}
\theoremstyle{definition}
\newtheorem{df}[thm]{Definition}
\newtheorem{remark}[thm]{Remark}
\newtheorem{construction}[thm]{Construction}
\providecommand{\customgenericname}{}
\newcommand{\newcustomtheorem}[2]{%
	\newenvironment{#1}[1]
	{%
		\renewcommand\customgenericname{#2}%
		\renewcommand\theinnercustomgeneric{##1}%
		\innercustomgeneric
	}
	{\endinnercustomgeneric}
}
\newcommand{\bt}{\boxtimes}
\newcommand{\fusion}[3]{{\binom{#3}{#1\;#2}}}
\newcommand{\al}{\alpha}
\newcommand{\Hom}{{\rm Hom}\,}
\newcommand{\End}{{\rm End}\,}
\newcommand{\Res}{{\rm Res}\,}
\newcommand{\Z}{\mathbb{Z}}
\newcommand{\N}{\mathbb{N}}
	\def\Res{{\rm Res}}
	\def\wt{{\rm wt}}
	\newcommand{\la}{\lambda}
	\def\C{{\mathbb C}}
	\def\P{{\mathbb P}}
	\def\Z{{\mathbb Z}}
	\def\N{{\mathbb N}}
	\def\1{{\em 1}}
	\def \End{{\rm End}}
	\def \Hom{{\rm Hom}}
	\def \b{\beta}
	\def \h{\mathfrak{h}}
	\def \ra {\rightarrow}
	\def \g{\mathfrak{g}}
	\def \ssq{\subseteq}
	\def \C {\mathbb{C}}
	\def\Id{\mathrm{Id}}
	\def\op{\oplus}
	\def\vac{\mathbf{1}}
	\def\spn{\mathrm{span}}
	\def \vac{\mathbf{1}}
	\def\bs{\backslash}
	\def\om{\omega}
	\def\o{\otimes}
	\def\<{\langle}
	\def\>{\rangle}
	\def\Om{\Omega}
	\def\ds{\dots}
	\def\gl{\mathfrak{gl}}
	\def\adm{\mathsf{Adm}}
	\def\sl{\mathfrak{sl}}
	\def\wphi{\widetilde{\varphi}}
	\def\ord{\mathrm{ord}}
	\def\Mod{\mathsf{Mod}}
	\newcommand \WV {\begin{smallmatrix}W\\WV\end{smallmatrix}}
	\def\<{\langle}
	\def\>{\rangle}
\begin{document}
		\title{One-Point Restricted Conformal Blocks and the Fusion Tensor Product}

		\author{Jianqi Liu}
		\address{Department of Mathematics, University of Pennsylvania, Philadelphia, PA, 19104}
		\email{jliu230@sas.upenn.edu}

		\begin{abstract}
			We investigate a one-point restriction of conformal blocks on $(\P^1,\infty,1,0)$ associated with modules over a vertex operator algebra. By restricting the module attached to the point $\infty$ to its bottom degree, we obtain a new formula for computing fusion rules in terms of a left $A(V)$-module $M^1\odot M^2$ over the Zhu algebra $A(V)$. As a consequence, for strongly rational VOAs, the construction of $M^1\odot M^2$ induces the fusion tensor product on the module category $\Mod(A(V))$.


		\end{abstract}
		\subjclass[2010]{
			17B69,  
			17B10, 
			81R10,  
			16D20, 
			81T40}
		
		\keywords{Vertex operator algebra, conformal block, intertwining operators,
			fusion rules, tensor product}
		\maketitle
		
		\tableofcontents
		
		\section{Introduction}
		The space of conformal blocks on the three-pointed genus-zero smooth curve defined by modules over a vertex operator algebra (VOA) $V$ is isomorphic to the vector space of intertwining operators among these modules, whose dimension is the fusion rule \cite{NT05,FBZ04}. Using the restriction technique of conformal blocks \cite{GLZ24}, we obtain a new hom-space description of the space of intertwining operators 	$I\fusion{M^1}{M^2}{M^3}$ using a new left module $M^1\odot M^2$ over the degree-zero Borcherds' Lie algebra $L(V)_0$ \cite{B86} or the Zhu algebra $A(V)$ \cite{Z96}. If $V$ is a strongly rational VOA, and $M^1$ and $M^2$ are generalized Verma modules associated to the $A(V)$-modules $S^1$ and $S^2$, respectively, we prove that $M^1\odot M^2$ is exactly the fusion tensor product $S^1\bt S^2$ in the module category of Zhu algebra $\Mod(A(V))$. 
		
		Motivated by finding a mathematical rigorous definition of the WZNW-conformal field theory, Tsuchiya, Ueno, and Yamada introduced the notions of coinvariants (covacua) and conformal blocks (vacua) on stable algebraic curves defined by highest-weight representations of affine Kac-Moody algebras $\hat{\g}$ of non-generic level $k\in \Z_{>0}$ \cite{TUY89}. From the VOA point of view, the representation theory of affine Lie algebras is in parallel with the representation theory of affine VOAs of the same level \cite{FZ92}. With this key observation, 
		the notions of coinvariants and conformal blocks in WZNW-model were generalized to the general VOA case by Zhu \cite{Z94}, Frenkel-Ben-Zvi \cite{FBZ04}, and Nagatomo-Tsuchiya \cite{NT05} for smooth curves, and by Damiolini-Gibney-Tarasca \cite{DGT24} for general stable curves. 
		The space of three-pointed conformal blocks associated to the datum $$\Sigma((M^3)',M^1,M^2)=(\P^1,(\infty, 1,0),(1/z,z-1,z), ((M^3)', M^1,M^2))$$ 
        is canonically isomorphic to the space of intertwining operators $I\fusion{M^1}{M^2}{M^3}$  \cite{FHL93} among these $V$-modules. These spaces are the building blocks of the space of conformal blocks on higher-genus algebraic curves via the factorization theorem \cite{DGT24}. The dimension of these spaces, namely the fusion rules, are not only one of the fundamental concept in the conformal field theory (CFT), but also carry important information about the rank of the vector bundle on the moduli space $\overline{\mathcal{M}}_{g,n}$, parametrizing $n$-pointed genus-$g$ stable curves, defined by the VOA-conformal blocks \cite{DGT24,DG23}. 
		
		The structure of the vector space $I\fusion{M^1}{M^2}{M^3}$ has been studied extensively in the VOA theory. Frenkel-Zhu's fusion rules theorem states that this vector space is isomorphic to the hom-space $\Hom_{A(V)}(A(M^1)\o_{A(V)}\Om(M^2), \Om(M^3))$ \cite{FZ92}, where $A(M^1)$ is a bimodule over Zhu algebra $A(V)$. Li revised this theorem by adding the assumptions that $M^2$ and $(M^3)'$ are generalized Verma modules \cite{DLM98} associated to their bottom degrees $\Om(M^2)$ and $\Om((M^3)')$, respectively \cite{Li99}. The author proved a variant of this theorem using the technique of three-pointed correlation functions defined by intertwining operators  \cite{Liu23}, which was further generalized to the $g$-twisted case by Gao, the author, and Zhu by developing a theory of twisted correlation functions \cite{GLZ23}.
      Furthermore, we introduced a notion of (twisted) restricted conformal blocks in \cite{GLZ24}. With this new notion, we noticed that the hom space $\Hom_{A(V)}(A(M^1)\o_{A(V)}\Om(M^2), \Om(M^3))$ can be identified with the space of two-point restricted conformal blocks defined on the projective line $(\P^1,(\infty, 1,0),(1/z,z-1,z), (\Om((M^3)'), M^1,\Om(M^2)))$, wherein the $V$-modules $(M^3)'$ and $M^2$ attached to $\infty$ and $0$ are restricted to their bottom degrees $\Om((M^3)')$ and $\Om(M^2)$, respectively, and the fusion rules theorem can be interpreted as a theorem about extending a restricted conformal block to a regular conformal block.  The fusion rules theorem had generalizations from various aspects in the theory of VOAs. For instance, Dong and Ren generalized it to the higher-level and modular representation case in \cite{DR13,DR14} using a bimodule over the higher-level Zhu algebra $A_N(V)$ \cite{DLM98(2)}; Huang and Yang generalized it to the logarithmic intertwining operator case in \cite{HY12}; Huang also gave an interpretation of the space of logarithmic intertwining operators case using a bimodule over his associative algebra $A^\infty(V)$ in \cite{H22,H24}. All of these generalizations were influenced by the idea of using a bimodule over certain variants of the associative algebra $A(V)$ to describe the space of intertwining operators.

		
In this paper, instead of the restricting the modules attached at $(\infty,0)$ to bottom degrees, we consider the case when only the module $(M^3)'$ attached at $\infty$ is restricted to its bottom-degree $\Om((M^3)')$ and investigate the space of conformal blocks associated to the datum  $$\Sigma(\Om((M^3)'), M^1, M^2)=(\P^1,(\infty, 1,0),(1/z,z-1,z), (\Om((M^3)'), M^1,M^2)).$$
    The space of conformal blocks associated to this one-point restricted datum turns out to be more interesting than the two-point restricted datum. It has an intimate relation with the tensor structure on $\Mod(A(V))$ for strongly rational VOAs $V$ and leads to a new fusion rules theorem.  
    
    

	To state our results precisely, and describe how they are proved, we set a small amount of notation. 
The chiral (current) Lie algebra $\mathcal{L}_{\P^1\bs \{\infty,1,0\}}(V)$ associated to the datum $(\P^1,\infty,1,0)$ defined by the VOA $V$ has spanning elements represented by $a\o f(z)$, where $f(z)$ is a rational function on $\P^1$ with poles at $\infty,1,0$. Relations between spanning elements are given by $L(-1)a\o f(z)=-a\o \frac{d}{dz}(f(z))$. 
The chiral Lie algebra $\mathcal{L}_{\P^1\bs \{\infty,1,0\}}(V)$ has natural actions on the $V$-modules $(M^3)'$, $M^1$, and $M^2$ given by the expansions of $f(z)$ around the poles.  
The space of {\em three-pointed conformal blocks on $\P^1$}, denoted by $\mathscr{C}\left(\Sigma((M^3)', M^1, M^2)\right)$, consists of linear functionals $f\in (M^3)'\o_\C M^1\o_\C M^2\ra \C$ that are invariant under the action of the chiral Lie algebra $\mathcal{L}_{\P^1\bs \{\infty,1,0\}}(V)$ on the tensor product \cite{Z94,NT05}.

		We define the {\em $\infty$-restricted chiral Lie algebra $\mathcal{L}_{\P^1\bs\{0,1\}}(V)_{\leq 0}$} to be the Lie subalgebra of $\mathcal{L}_{\P^1\bs \{\infty,1,0\}}(V)$ spanned by elements $a\o f(z)$ whose action on the tensor product leaves the subspace $\Om((M^3)') \o_\C M^1\o _\C M^2$ invariant (see Definition~\ref{def:inftyrestrictedchiralLie}). Then we define a {\em $\infty$-restricted conformal block} to be a linear functional $f:\Om((M^3)') \o_\C M^1\o _\C M^2\ra \C$ that is invariant under the action of $\mathcal{L}_{\P^1\bs\{0,1\}}(V)_{\leq 0}$. The space of such conformal blocks is denoted by $\mathscr{C}_{\mathrm{res}}\left(\Sigma(\Om((M^3)'), M^1, M^2)\right)$ (see Definition~\ref{def:restrictedcfb}). The Lie algebra $\mathcal{L}_{\P^1\bs\{0,1\}}(V)_{\leq 0}$ has an ideal $\mathcal{L}_{\P^1\bs\{0,1\}}(V)_{< 0}$ consisting of elements whose action vanishes on $\Om((M^3)')$, and such that the quotient algebra $\mathcal{L}_{\P^1\bs\{0,1\}}(V)_{\leq 0}/\mathcal{L}_{\P^1\bs\{0,1\}}(V)_{< 0}$ is isomorphic to the degree zero Borcherds Lie algebra $L(V)_0$ (see Lemma~\ref{lm:propertyofinftychiralLie}). Let $$M^1\odot M^2:=M^1\o_\C M^2/\mathcal{L}_{\P^1\bs\{0,1\}}(V)_{< 0}.(M^1\o_\C M^2),$$ which is a module over the Lie algebra $L(V)_0$. This quotient space also occurred in a construction of fusion tensor product of modules over $C_2$-cofinite VOAs by Tsuchiya and Wood (see \cite[(2.84)]{TW13}), wherein the chiral Lie algebra $\mathcal{L}_{\P^1\bs \{\infty,1,0\}}(V)$ is referred to as the current Lie algebra associated to $(\P^1, \infty,1,0)$, and is denoted by $\g^{\P}(V)$. 
		
		Using a set of spanning elements of $\mathcal{L}_{\P^1\bs\{0,1\}}(V)_{<0}$, we can show that $M^1\odot M^2$ is spanned by the equivalent classes $v_1\odot v_2$ of the elements $v_1\o v_2\in M^1\o_\C M^2$, subject to the following relations (see Definition~\ref{df:M1odotM2}): 
		
		\begin{align*}
			\sum_{j\geq 0}\binom{\wt a-1}{j} a(j-1)v_1\odot v_2&=v_1\odot \sum_{j\geq 0} a(\wt a-1+j)v_2,\\
			\sum_{j\geq 0} \binom{\wt a-k}{j} a(j)v_1\odot v_2&=-v_1\odot a(\wt a-k)v_2,\quad k\geq 2,
		\end{align*}
		where $a\in V$, $v_1\in M^1$, and $v_2\in M^2$. Although there is no $A(V)$-bimodule appear in $M^1\odot M^2$, it is closely related to the left $A(V)$-module $A(M^1)\o_{A(V)}\Om(M^2)$ in the fusion rules theorem \cite{FZ92}. The following is our first main theorem (see Theorem~\ref{thm:AVaction} and Corollary~\ref{corc:comparison}): 
		
		\begin{customthm}{A}\label{thm:A}
		Let $M^1$ and $M^2$ be $V$-modules. If either $M^1$ or $M^2$ is generated by its bottom degree, then $M^1\odot M^2$ is a left $A(V)$-module with respect to the following action:
			$$	A(V)\times \left(M^1\odot M^2\right)\ra M^1\odot M^2,\quad [a].(v_1\odot v_2)=(a\ast v_1-v_1\ast a)\odot v_2+ v_1\odot o(a)v_2,
			$$
			where  $[a]\in A(V)$, $v_1\in M^1$ and $v_2\in M^2$. In particular, if $M^2$ is generated by $\Om(M^2)$, then there is an epimorphism of left $A(V)$-modules $A(M^1)\o_{A(V)}\Om(M^2)\twoheadrightarrow M^1\odot M^2$. 
		\end{customthm}
		
		We prove Theorem~\ref{thm:A} by relating $O(V)$ and $O(M)$ in the definitions of $A(V)$ and $A(M)$ \cite{Z96,FZ92} with the properties of the Lie algebra  $\mathcal{L}_{\P^1\bs\{0,1\}}(V)_{\leq 0}$ and its ideal $\mathcal{L}_{\P^1\bs\{0,1\}}(V)_{< 0}$.  In general, the $L(V)_0$-module $M^1\odot M^2$ is a proper quotient of $A(M^1)\o _{A(V)}\Om(M^2)$ (see Section~\ref{Sec:5.4.2}).  But if the VOA $V$ is rational and $C_2$-cofinite, and $M^1,M^2$ are irreducible $V$-modules, then the $A(V)$-module $M^1\odot M^2$ is isomorphic to $A(M^1)\o _{A(V)}\Om(M^2)$, and they are both the bottom degree of Huang-Lepowsky's $P(z)$-tensor product module $M^1\bt_{P(z)}M^2$ (see Corollary~\ref{coro:bottomdegree}). 
		
		The $L(V)_0$-module $M^1\odot M^2$ can be used to describe the space of intertwining operators. The following is our second main theorem (see Theorem~\ref{thm:fusion}): 
		
		\begin{customthm}{B}\label{thm:B}
       Let $M^1,M^2,M^3$ be ordinary $V$-modules. Suppose that the contragredient module $(M^3)'$ is isomorphic to the generalized Verma module $\bar{M}(\Om((M^3)'))$ associated with the left $A(V)$-module $\Om((M^3)')$, and that $\Om((M^3)')^\ast\cong \Om(M^3)$. Assume further that either $M^1$ or $M^2$ is generated by its bottom degree $\Om(M^1)$ or $\Om(M^2)$, respectively. Then
	\begin{equation}\label{eq:fusionintro}
		I\fusion{M^1}{M^2}{M^3}\cong \Hom_{A(V)}(M^1\odot M^2,\Om(M^3)).
	\end{equation}
	In particular, if $V$ is rational, then \eqref{eq:fusion} holds for any irreducible $V$-modules $M^1,M^2$, and $M^3$.
		\end{customthm}
		
		In comparison with the Frenkel--Zhu fusion rules theorem, Theorem~\ref{thm:B} relaxes the condition that $M^2$ is a generalized Verma module, which makes an essential difference when the VOA $V$ is not rational. Moreover, if we only assume that $(M^3)'$ is generated by its bottom degree $\Om((M^3)')$ and $\Om((M^3)')^\ast\cong\Om(M^3)$, then we obtain an estimate of the fusion rule (see Lemma~\ref{lm:estimate}):
\[
	N\fusion{M^1}{M^2}{M^3}\leq \dim \Hom_{A(V)}(M^1\odot M^2,\Om(M^3)).
\]
This estimate is in general sharper than the usual estimate
$N\fusion{M^1}{M^2}{M^3}\leq \dim \Hom_{A(V)}(A(M^1)\o_{A(V)} \Om(M^2),\Om(M^3))$.
We use Li's example of modules over the universal Virasoro VOA \cite{Li99}
to illustrate this fact in Section~\ref{Sec:5.4.2}. 
		

		Instead of using the language of correlation functions, we use the language of conformal blocks to prove Theorem~\ref{thm:B}. The proof is much shorter than the correlation function arguments \cite{Liu23,GLZ23}. It is well-known that the space of intertwining operators is isomorphic to the space of conformal blocks $\mathscr{C}\left(\Sigma((M^3)', M^1, M^2)\right)$ \cite{FBZ04,NT05}. Using some basic facts about the representation theory of Lie algebras, we can also show that the space $\Hom_{A(V)}(M^1\odot M^2,\Om(M^3))$ can be identified with the space of $\infty$-restricted conformal blocks $\mathscr{C}_{\mathrm{res}}\left(\Sigma(\Om((M^3)'), M^1, M^2)\right)$ (see Proposition~\ref{prop:inftycfbHom}). Then Theorem~\ref{thm:B} is equivalent to showing that  $\mathscr{C}\left(\Sigma((M^3)', M^1, M^2)\right)\cong \mathscr{C}_{\mathrm{res}}\left(\Sigma(\Om((M^3)'), M^1, M^2)\right)$. The proof can be carried out using an explicit set of spanning elements of $\mathcal{L}_{\P^1\bs \{\infty,1,0\}}(V)$ (see Theorem~\ref{thm:main}). There is an alternative proof of a more general version of the isomorphism between restricted and unrestricted conformal blocks in \cite{GLZ24} using the Riemann-Roch theorem of algebraic curves. The proof we present in this paper is purely algebraic. 

One interesting consequence of Theorem~\ref{thm:B} is that it shows the contracted tensor product $\odot$ gives the fusion tensor product of $A(V)$-modules for strongly rational VOAs. Indeed, Dong--Li--Mason constructed a pair of adjoint functors
\[
(\bar{M}(-)\dashv \Om(-)):\Mod(A(V))\rightleftarrows \mathsf{Adm}(V),
\]
where $\bar{M}(-)$ is the generalized Verma module functor, and proved that this pair defines an adjoint equivalence of categories when $V$ is rational \cite{DLM98}. Huang--Lepowsky constructed a $P(z)$-tensor product $M^1\bt_{P(z)}M^2$ for VOA-modules using the theory of intertwining operators \cite{HL95}. Huang proved the associativity of the $P(z)$-tensor product for strongly rational VOAs by establishing the convergence of correlation functions defined by compositions of intertwining operators via differential equations \cite{H05}. Furthermore, $\mathsf{Adm}(V)$ is a modular tensor category when $V$ is strongly rational \cite{H08}. The modular tensor category structure on $\mathsf{Adm}(V)$ then transfers to a modular tensor category structure on $\Mod(A(V))$ under the equivalence functor $\Om(-)$. The following theorem is our final main result (see Theorem~\ref{coro:bottomdegree}).

\begin{customthm}{C}\label{mainC}
    Let $V$ be a strongly rational VOA, and let $M^1,M^2$ be irreducible $V$-modules. Then
$	
\Om(M^1\bt_{P(z)}M^2)\cong M^1\odot M^2
$	
	as left $A(V)$-modules, and $\bar{M}(M^1\odot M^2)\cong M^1\bt_{P(z)} M^2$ as $V$-modules. Moreover, the fusion tensor product of two irreducible $A(V)$-modules $S,T$ is given by
	\begin{equation}\label{coro1tensor}
		S\bt T\cong\bar{M}(S)\odot \bar{M}(T).
	\end{equation}
\end{customthm}
We verify the correctness of Theorem~\ref{mainC} using the standard examples of VOAs, including the level-one affine/lattice VOA $L_1(\sl_2)$, and the critical Ising model Virasoro VOA $L(\frac{1}{2},0)$. The fusion tensor products given by \eqref{coro1tensor} turn out to be both the correct and standard ones, see Proposition~\ref{prop:affinelattice} and Proposition~\ref{prop:Ising}.

		
		This paper is organized as follows. We first recall the basics of vertex operator algebras and related constructions, and then review the definition of three-pointed conformal blocks on $\P^1$ in Section~\ref{Sec:2}. In Section~\ref{Sec:3}, we introduce the notion of $\infty$-restricted chiral Lie algebras and discuss their basic properties and spanning elements. In Section~\ref{Sec:4}, using the results from the previous sections, we introduce the notions of $\infty$-restricted conformal blocks and the $L(V)_0$-module $M^1\odot M^2$, and prove Theorem~\ref{thm:A}. We prove Theorem~\ref{thm:B} in Section~\ref{sec:5} and discuss some of its consequences. In Section~\ref{sec:6}, we prove Theorem~\ref{mainC}. Finally, in Section~\ref{sec:7}, we present examples of the tensor product $M^1\odot M^2$ for standard examples of VOAs.


		

		
		

		
		
		\section{Space of VOA-conformal blocks on three-pointed $\P^1$}\label{Sec:2}
	Throughout this paper, we adopt the following conventions:
	\begin{itemize}
		\item All vector spaces and algebraic curves are defined over $\C$, the field of complex numbers.
		\item $\N$ represents the set of all natural numbers, including $0$.
	\end{itemize}
	
	In this section, we first review vertex operator algebras and related structures, and then introduce the notions of chiral Lie algebras ancillary to the three-pointed projective line $(\P^1,\infty,1,0)$ and the space of three-pointed conformal blocks using an algebraic language.
	
	\subsection{Preliminaries of VOAs}
	We recall the definitions of vertex operator algebras (VOAs) and related notions such as modules over VOAs, intertwining operators and fusion rules, contragredient modules, Borcherds' Lie algebra and Zhu's algebra, and generalized Verma modules over VOAs. These notions will be used later in this paper. We refer to \cite{B86,FLM88,FHL93,FZ92,Z96,DLM98,LL04} for more details.
	
	\subsubsection{Vertex operator algebras and modules}

 A VOA $V$ is said to be of {\em CFT-type} if $V=V_0\oplus V_+$, where $V_0=\C \vac$ and $V_+=\bigoplus_{n=1}^\infty V_n$. In this paper, unless stated otherwise, a VOA $V$ is always of CFT-type.
	
	\begin{df}
		Let $V$ be a VOA. An {\em admissible $V$-module} is a $\N$-graded vector space
$
		M=\bigoplus_{n=0}^\infty M(n),
$
		equipped with a linear map
		\[
		Y_M:V\ra \End(M)[\![z,z^{-1}]\!],\qquad
		Y_M(a,z)=\sum_{n\in \Z} a(n)z^{-n-1},
		\]
		called the {\em module vertex operator}, satisfying the following axioms:
		\begin{enumerate}
			\item (truncation property) For any $a\in V$ and $u\in M$, $Y_M(a,z)u\in M((z))$.
			\item (vacuum property) $Y_M(\vac,z)=\Id_M$.
			\item (Jacobi identity for $Y_M$) For any $a,b\in V$ and $u\in M$,
			\begin{equation}\label{eq:formalJacobi}
				\begin{aligned}
					z_0^{-1}\delta\left(\frac{z_1-z_2}{z_0}\right) &Y_M(a,z_1)Y_M(b,z_2)u
					- z_0^{-1}\delta\left(\frac{-z_2+z_1}{z_0}\right)Y_M(b,z_2)Y_M(a,z_1)u\\
					&= z_2^{-1}\delta\left(\frac{z_1-z_0}{z_2}\right) Y_M(Y(a,z_0)b,z_2)u.
				\end{aligned}
			\end{equation}
			\item ($L(-1)$-derivative property) $Y_M(L(-1)a,z)=\frac{d}{dz} Y_M(a,z)$ for any $a\in V$.
			\item (grading property) For any $a\in V$, $m\in \Z$, and $n\in \N$,
			\[
			a(m)M(n)\subseteq M(n+\wt a-m-1),
			\]
			i.e., $\wt(a(m))=\wt a-m-1$.
		\end{enumerate}
		We write $\deg v=n$ if $v\in M(n)$, and call it the {\em degree} of $v$. Denote the category of admissible $V$-modules by $\mathsf{Adm}(V)$.
		
		An admissible $V$-module $M$ is called {\em ordinary} if each degree-$n$ subspace $M(n)$ is a finite-dimensional eigenspace of $L(0)$ with eigenvalue $h+n$, where $h\in \C$ is called the {\em conformal weight} of $M$. In particular, if we write $L(0)v=(\wt v)\cdot v$ for $v\in M(n)$, then $\wt v=\deg v+h$.
		
		We abbreviate an {\em irreducible ordinary $V$-module} simply as an {\em irreducible $V$-module}. Submodules, quotient modules, and irreducible modules are defined in the usual categorical sense. The VOA $V$ is called {\em rational} if the category of admissible $V$-modules is semisimple \cite{DLM98}.
	\end{df}
	
	\begin{remark}
		We recall the following well-known facts about the Jacobi identity:
		\begin{enumerate}
			\item Using Cauchy's integral (or residue) theorem, one can rewrite the formal-variable Jacobi identity \eqref{eq:formalJacobi} into the {\em residue form} \cite{FLM88,FZ92,Z96}:
			\begin{equation}\label{eq:resJacobi}
				\begin{aligned}
					&\Res_{z=0} Y_{M}(a,z)Y_M(b,w)\,\iota_{z,w}(F(z,w))
					-\Res_{z=0} Y_M(b,w)Y_M(a,z)\,\iota_{w,z}(F(z,w))\\
					&=\Res_{z-w=0} Y_M(Y(a,z-w)b,w)\,\iota_{w,z-w}(F(z,w)),
				\end{aligned}
			\end{equation}
			where $F(z,w)=z^n w^m (z-w)^l$ with $m,n,l\in \Z$, and $\iota_{z,w}$, $\iota_{w,z}$, and $\iota_{w,z-w}$ are the expansion operations of a rational function in complex variables $z$ and $w$ on the domains $|z|>|w|$, $|w|>|z|$, and $|w|>|z-w|$, respectively.
			\item The Jacobi identity \eqref{eq:resJacobi} has the following component form:
			\begin{equation}\label{borcherds}
				\begin{aligned}
					&\sum_{i=0}^\infty \binom{l}{i} a(m+l-i) b(n+i)
					-\sum_{i=0}^\infty (-1)^{l+i} \binom{l}{i} b(n+l-i) a(m+i)\\
					&=\sum_{i=0}^\infty (a(l+i)b)(m+n-i),
				\end{aligned}
			\end{equation}
			where $a,b\in V$ and $m,n,l\in \Z$. This identity is also called the {\em Borcherds identity} \cite{B86}.
		\end{enumerate}
	\end{remark}

		
	\subsubsection{Intertwining operators and fusion rules}
	Intertwining operators among $V$-modules are generalizations of intertwining operators among modules over Lie algebras \cite{FHL93,FZ92,Li98}:
	\begin{df}\label{def:IO}
		Let $M^1,M^2,M^3$ be ordinary $V$-modules of conformal weights $h_1,h_2,h_3\in \C$, respectively. Let $h:=h_1+h_2-h_3$. An {\em intertwining operator of type $\fusion{M^1}{M^2}{M^3}$} is a linear map
		\[
		I(\cdot,z):M^1\ra \Hom(M^2,M^3)[\![z,z^{-1}]\!]z^{-h},\qquad
		I(v_1,z)=\sum_{n\in \Z}v_1(n)z^{-n-1-h},
		\]
		satisfying the following axioms:
		\begin{enumerate}
			\item (truncation property) For any $v_1\in M^1$ and $v_2\in M^2$, $v_1(n)v_2=0$ for $n\gg 0$.
			\item ($L(-1)$-derivative property) For any $v_1\in M^1$,
			$I(L(-1)v_1,z)=\frac{d}{dz} I(v_1,z)$.
			\item (Jacobi identity) For any $v_1\in M^1$, $v_2\in M^2$, and $a\in V$, one has
			\begin{align*}
				z_0^{-1}\delta\left(\frac{z_1-z_2}{z_0}\right) &Y_{M^3}(a,z_1)I(v_1,z_2)v_2
				- z_0^{-1}\delta\left(\frac{-z_2+z_1}{z_0}\right)I(v_1,z_2)Y_{M^2}(a,z_1)v_2\\
				&=z_2^{-1}\delta\left(\frac{z_1-z_0}{z_2}\right) I(Y_{M^1}(a,z_0)v_1,z_2)v_2.
			\end{align*}
		\end{enumerate}
		The vector space of intertwining operators of type $\fusion{M^1}{M^2}{M^3}$ is denoted by $I\fusion{M^1}{M^2}{M^3}$. Its dimension, denoted by $N\fusion{M^1}{M^2}{M^3}$, is called the {\em fusion rule among $M^1,M^2$, and $M^3$}.
	\end{df}
	
	Using the Jacobi identity and the $L(0)$-eigenspace property for $M^i(n)$, one can easily show that
$
	v_1(n)M^2(m)\subseteq M^3(\deg v_1-n-1+m),
$
	for any $v_1\in M^1$, $n\in \Z$, and $m\in \N$ \cite{FZ92}.
	
	\subsubsection{Contragredient modules}
	
	\begin{df}\cite{FHL93}
		Let $M$ be an ordinary $V$-module. Its {\em contragredient module} is the graded dual space
$
		M'=\bigoplus_{n=0}^\infty M(n)^\ast,
$
		with $Y_{M'}:V\ra \End(M')[\![z,z^{-1}]\!]$ given by
		\begin{equation}\label{eq:contra}
			\<Y_{M'}(a,z)v',v\>
			:=\<v',Y_M(e^{zL(1)}(-z^{-2})^{L(0)}a,z^{-1})v\>
			=\<v',Y'_M(a,z)v\>,
		\end{equation}
		where $\<\cdot,\cdot\>:M'\times M\ra \C$ is the natural pairing between graded vector spaces.
	\end{df}
	
	It was proved in \cite[Section 5]{FHL93} that $Y_{M'}$ defined by \eqref{eq:contra} satisfies the Jacobi identity \eqref{eq:formalJacobi}, and that $Y''_M(a,z)=Y_M(a,z)$.
	
	Moreover, if we write $Y'_{M}(a,z)=\sum_{n\in \Z}a'(n) z^{-n-1}$, then by taking the formal residue,
	\begin{equation}\label{eq:a'n}
		a'(n)=\sum_{j\geq 0}\frac{(-1)^{\wt a}}{j!} (L(1)^j a)(2\wt a-n-j-2).
	\end{equation}
	It follows that $a'(n)M(m)\subseteq M(-\wt a+n+1+m)$, i.e., $\wt (a'(n))=-\wt a+n+1$.

	\subsubsection{Borcherds' Lie algebra and Zhu's algebra associated to a VOA}\label{sec:2.1.4}
	Let $V$ be a VOA. Then $\hat{V}=V\otimes_\C \C[t,t^{-1}]$ is the tensor product vertex algebra \cite{FHL93}, with vacuum element $\vac\otimes 1$ and differential
$
	\nabla=L(-1)\otimes \Id+\Id\otimes \frac{d}{dt};
$
	see \cite{B86} for more details.
	
	\begin{df}\cite{B86}\label{def:BorLie}
		Let $V$ be a VOA. The Borcherds Lie algebra $L(V)$ is defined by
		\[
		L(V)=\hat{V}/\nabla (\hat{V})
		=\spn\{a_{[n]}=a\otimes t^n+\nabla (\hat{V}) :a\in V, n\in \Z\},
		\]
		with $(L(-1)a)_{[n]}=-n a_{[n-1]}$, and Lie bracket
		\begin{equation}\label{eq:bracketborcherdslie}
			[a_{[m]},b_{[n]}]=\sum_{j\geq 0}\binom{m}{j} (a(j)b)_{[m+n-j]},
			\quad a,b\in V,\ m,n\in \Z.
		\end{equation}
		
	\end{df}
	
	\begin{df}\cite{Z96}\label{def:AV}
		Let $V$ be a VOA. The {\em Zhu algebra $A(V)$} is defined by $A(V)=V/O(V)$, where
		\begin{equation}\label{eq:O(V)}
			O(V)=\spn\left\{ a\circ b=\Res_{z=1}Y(a,z-1)b\,\iota_{1,z-1}\left(\frac{z^{\wt a}}{(z-1)^2}\right): a,b\in V\right\}.
		\end{equation}
		The space $A(V)=\spn\{[a]=a+O(V):a\in V\}$ is an associative algebra with respect to
		\begin{equation}\label{eq:prodAV}
			[a]\ast [b]
			=\Res_{z=1} [Y(a,z-1)b]\iota_{1,z-1}\left(\frac{z^{\wt a}}{z-1}\right)
			=\sum_{j\geq 0}\binom{\wt a}{j} [a(j-1)b],
		\end{equation}
		for $a,b\in V$.
		The category of (left) $A(V)$-modules is denoted by $\mathsf{Mod}(A(V))$.
	\end{df}
	
	We remark the following facts about the Zhu algebra and the Borcherds Lie algebra; see \cite{B86,Z96,DLM98} for more details.
	\begin{enumerate}
		\item There is an anti-involution $\theta: A(V)\ra A(V)$ defined by
		$\theta([a])=[e^{L(1)}(-1)^{L(0)}a]$, with
		$\theta([a]\ast [b])=\theta([b])\ast \theta([a])$.
		\item There is a similar anti-involution $\theta: L(V)\ra L(V)$ defined by
		$\theta (a_{[n]})=a'_{[n]}$ via \eqref{eq:a'n}, with
		$\theta([a_{[m]},b_{[n]}])=[\theta(b_{[n]}),\theta(a_{[m]})]$.
		\item Let $M=\bigoplus_{n=0}^\infty M(n)$ be an admissible $V$-module (or, more generally, a weak $V$-module). Then the subspace of ``highest-weight vectors''
		\[
		\Om(M)=\{v\in M: a(n)v=0,\ \deg(a(n))=\wt a-n-1<0\}
		\]
		is a left $A(V)$-module via $A(V)\ra \End(\Om(M))$, $[a]\mapsto o(a):=a(\wt a-1)$.
		\item If $M$ is an irreducible admissible $V$-module, then $\Om(M)$ is an irreducible $A(V)$-module, which is also the bottom degree $M(0)$ of $M$, i.e., $\Om(M)=M(0)$. Throughout this paper, we refer to $\Om(M)$ as the {\em bottom degree} of the $V$-module $M$.
		\item Let $M$ be an ordinary $V$-module, and let $M'$ be its contragredient module.
		The bottom degree $M'(0)=M(0)^\ast$ is naturally a right $A(V)$-module. It is a left $A(V)$-module via the anti-involution. $A(V)\times M(0)^\ast\ra  M(0)^\ast, ([a],v')\mapsto [a].v'$, where
		\begin{equation}\label{eq:contraleftA(V)}
			\braket{[a]. v'}{v}
			:=\braket{v'.[\theta(a)]}{v}
			=\braket{v'}{[\theta(a)].v}
			=\braket{v'}{o(\theta(a))v},
		\end{equation}
		where $[a]\in A(V)$, $v'\in M(0)^\ast$, and $v\in M(0)$. 	If $M$ is an irreducible ordinary module, then $M'$ is also irreducible and $\Om(M')=M(0)^\ast$. 
		\item There is an epimorphism of Lie algebras
		\begin{equation}\label{eq:LieandAV}
			L(V)_0\twoheadrightarrow A(V)_{\mathrm{Lie}},\qquad
			a_{[\wt a-1]}\mapsto [a],\quad a\in V.
		\end{equation}
	\end{enumerate}
	
	\subsubsection{Generalized Verma module associated to an $A(V)$-module}\label{sec:2.1.5}
	Let $U$ be an irreducible left $A(V)$-module. Then $U$ is a module over the Lie algebra $L(V)_0$ via \eqref{eq:LieandAV}, which can be lifted to a module over $L(V)_0\oplus L(V)_+$ by letting $(L(V)_+)\cdot U=0$.
	
	Consider the induced module
$
	M(U):=U(L(V))\otimes_{U(L(V)_0\oplus L(V)_+)} U
	=U(L(V)_-)\otimes_\C U.
$
	Let $J$ be the $U(L(V))$-submodule of $M(U)$ generated by the coefficients of the Borcherds identity \eqref{borcherds}. It was proved in \cite{DLM98} that
$
	\bar{M}(U):=M(U)/J
$
	is an admissible $V$-module generated by $U$, with bottom degree $\bar{M}(U)(0)=U$. The module vertex operator is given by
	\[
	Y_{\bar{M}(U)}(a,z)=\sum_{n\in \Z} a(n)z^{-n-1}
	=\sum_{n\in \Z} a_{[n]}z^{-n-1},
	\]
	for any $a\in V$. Moreover, any admissible $V$-module $W$ generated by $W(0)=U$ is a quotient module of $\bar{M}(U)$.
	
	If we view $\bar{M}(-)$ and $\Om(-)$ as functors, then they form an adjoint pair of functors \cite[Theorem 6.1]{DLM98}:
	\[
	(\bar{M}(-)\dashv \Om(-)):\Mod(A(V))\rightleftarrows \mathsf{Adm}(V).
	\]
Moreover, we have the following fact about rationality:  
\begin{lm}\cite[Theorem 8.1]{DLM98}.\label{lmrationality}
A VOA $V$ is rational if and only if $A(V)$ is semisimple and $(\bar{M}(-)\dashv \Om(-)):\Mod(A(V))\rightleftarrows \mathsf{Adm}(V)$ is an adjoint equivalence.   
\end{lm}

	Let $M$ be an irreducible $V$-module. Then $\Om(M')=M(0)^\ast=\Om(M)^\ast$ is an irreducible left $A(V)$-module via \eqref{eq:contraleftA(V)}.
	The anti-involution $\theta: L(V)\ra L(V)$ of the Borcherds Lie algebra induces an isomorphism of associative algebras
	\[
	\theta: U(L(V))\ra U(L(V))^{\mathrm{op}}.
	\]
	Note that
$
	\theta(b_r(n_r)\cdots b_1(n_1))=b'_1(n_1)\cdots b'_r(n_r),
$
	where $b'(n)$ is given by \eqref{eq:a'n}, and
$
	\bar{M}(\Om(M'))
	=U(L(V))\cdot\Om(M')
	=\theta(U(L(V)))\cdot\Om(M').
$
	It follows that
	\begin{equation}\label{eq:spanverma}
		\bar{M}(\Om(M'))
		=\spn\{b'_1(n_1)\cdots b'_r(n_r)v'
		:r\geq 0,\ b_i\in V,\ n_i\in \Z,\ v'\in \Om(M') \}.
	\end{equation}
	
	Moreover, by carefully choosing the coefficients in the Borcherds identity \eqref{borcherds} for the vertex operator $Y_{\bar{M}(U)}$, one can also show that
	\[
	\bar{M}(\Om(M'))
	=\spn\{b'(n)v':b\in V,\ n\in \Z,\ v'\in \Om(M') \}
	\]
	\cite{LL04}. These facts will be used in Section~\ref{sec:5}.

	\subsection{The chiral Lie algebra ancillary to $(\P^1,\infty,1,0)$}
	The chiral Lie algebra $\mathcal{L}_{C\bs P_\bullet}(V)$ ancillary to a VOA $V$ and a stable $n$-pointed curve $(C,P_\bullet)$ was defined as
	$H^0(C\bs P_\bullet, \mathscr{V}_C\o \Omega_C/\mathrm{Im}\,\nabla)$; see \cite{FBZ04,BD04,DGT24}.
	In this paper, we are only interested in the smooth curve $C=\P^1$ with three marked points $P_\bullet=(\infty,1,0)$.
	In this case, the chiral Lie algebra $\mathcal{L}_{\P^1\bs\{\infty,1,0\}}(V)$ has the following simple description:
	
	\begin{df}\cite{NT05,DGT24}\label{def:chiralLie}
		The chiral (current) Lie algebra $\mathcal{L}_{\P^1\bs\{\infty,1,0\}}(V)$ ancillary to the smooth curve $\P^1\bs\{\infty,1,0\}$
		is defined by
		\begin{equation}\label{eq:defofchiralLie}
			\mathcal{L}_{\P^1\bs\{\infty,1,0\}}(V)
			:=\left(V\o \C[z^{\pm 1}, (z-1)^{\pm 1}]\right)/\mathrm{Im}\,\nabla,
		\end{equation}
		where $\nabla=L(-1)\o\Id+\Id\o \frac{d}{dz}$.
		Then, as a vector space,
		\begin{equation}\label{spnofchiralLie}
			\mathcal{L}_{\P^1\bs\{\infty,1,0\}}(V)
			=\spn\left\{a\o \frac{z^n}{(z-1)^m}: a\in V,\ m,n\in \Z\right\},
		\end{equation}
		with
$
		(L(-1)a)\o \frac{z^n}{(z-1)^m}
		=-a\o \frac{d}{dz}\left(\frac{z^n}{(z-1)^m}\right),
$
		where we use the same symbol for the equivalence class of
		$a\o \frac{z^n}{(z-1)^m}\in V\o \C[z^{\pm 1}, (z-1)^{\pm 1}]$ in the quotient space.
		The Lie bracket on $\mathcal{L}_{\P^1\bs\{\infty,1,0\}}(V)$ is given by
		\begin{equation}\label{eq:chiralLiebracket}
			\left[a\o \frac{z^n}{(z-1)^m},\, b\o \frac{z^s}{(z-1)^t}\right]
			=\sum_{i\geq 0}\sum_{j\geq 0} \binom{n}{i} \binom{-m}{j}\,
			a_{i+j} b\o \frac{z^{n+s-i}}{(z-1)^{m+t+j}},
		\end{equation}
		where $a,b\in V$ and $m,n,s,t\in \Z$.
	\end{df}
	
\begin{remark}
The original definition of the three-point chiral (current) Lie algebra on $\P^1$ by Nagatomo--Tsuchiya \cite{NT05} was
\[
\mathcal{L}_{\P^1\bs\{\infty,1,0\}}(V)
=\frac{\bigoplus_{\Delta=0}^\infty V_{\Delta}\o H^0(\P^1, \Om^{1-\Delta}(\ast(\infty,1,0)))}
{\nabla\bigoplus_{\Delta=0}^\infty V_{\Delta}\o H^0(\P^1, \Om^{-\Delta}(\ast(\infty,1,0)))}.
\]
Thus, elements in $\mathcal{L}_{\P^1\bs\{\infty,1,0\}}(V)$ are represented by $a\o f(z)\,(dz)^{1-\Delta}$, where $f(z)$ is a global meromorphic function on $\P^1$ with poles only at $\infty$, $1$, and $0$.
In this paper, since we focus only on the algebraic side of the theory, we omit the formal differential symbol $(dz)^{1-\Delta}$ in our definition~\eqref{eq:defofchiralLie} and \eqref{spnofchiralLie}.

\end{remark}
	
	The following proposition is a purely algebraic version of the chiral Lie algebra action on the space of coinvariants \cite{FBZ04,NT05,DGT24}.
	The proof is an immediate consequence of the Jacobi identity of VOAs together with \eqref{eq:chiralLiebracket}, so we omit it.
	
	\begin{prop}\label{prop:chiralLieaction}
		Let $M^1,M^2$, and $M^3$ be $V$-modules, and let
		$a\o\frac{z^n}{(z-1)^m}\in \mathcal{L}_{\P^1\bs\{\infty,1,0\}}(V)$.
		Then:
		\begin{enumerate}
			\item $(M^3)'$ is a module over the Lie algebra $\mathcal{L}_{\P^1\bs\{\infty,1,0\}}(V)$ via
			$\rho_\infty:\mathcal{L}_{\P^1\bs\{\infty,1,0\}}(V)\ra \gl((M^3)')$,
			\begin{equation}\label{eq:inftyaction}
				\begin{aligned}
					\rho_\infty\left(a\o\frac{z^n}{(z-1)^m}\right)(v'_3)
					&=\Res_{z=\infty} Y_{(M^3)'}(\vartheta(a),z^{-1})v'_3\,
					\iota_{z,1}\left(\frac{z^n}{(z-1)^m}\right)\\
					&=-\sum_{j\geq 0}\binom{-m}{j} (-1)^j a'(n-m-j)v'_3,\quad v'_3\in (M^3)',
				\end{aligned}
			\end{equation}
			where $\vartheta(a)=e^{zL(1)}(-z^{-2})^{L(0)}(a)$, and $a'(k)$ is given by \eqref{eq:a'n}.
			\item $M^1$ is a module over the Lie algebra $\mathcal{L}_{\P^1\bs\{\infty,1,0\}}(V)$ via
			$\rho_1:\mathcal{L}_{\P^1\bs\{\infty,1,0\}}(V)\ra \gl(M^1)$,
			\begin{equation}\label{eq:1action}
				\begin{aligned}
					\rho_1\left(a\o \frac{z^n}{(z-1)^m}\right)(v_1)
					&=\Res_{z=1} Y_{M^1}(a,z-1)v_1\,
					\iota_{1,z-1}\left(\frac{z^n}{(z-1)^m}\right)\\
					&=\sum_{j\geq 0} \binom{n}{j} a(j-m)v_1,\quad v_1\in M^1.
				\end{aligned}
			\end{equation}
			\item $M^2$ is a module over the Lie algebra $\mathcal{L}_{\P^1\bs\{\infty,1,0\}}(V)$ via
			$\rho_0:\mathcal{L}_{\P^1\bs\{\infty,1,0\}}(V)\ra \gl(M^2)$,
			\begin{equation}\label{eq:0action}
				\begin{aligned}
					\rho_{0}\left(a\o\frac{z^n}{(z-1)^m}\right)(v_2)
					&=\Res_{z=0} Y_{M^2}(a,z)v_2\,
					\iota_{1,z}\left(\frac{z^n}{(z-1)^m}\right)\\
					&=\sum_{j\geq 0} \binom{-m}{j}(-1)^{-m-j} a(n+j)v_2,\quad v_2\in M^2.
				\end{aligned}
			\end{equation}
		\end{enumerate}
		
		In particular, $(M^3)'\o_\C M^1\o_\C M^2$ is a tensor product module over the chiral Lie algebra
		$\mathcal{L}_{\P^1\bs\{\infty,1,0\}}(V)$, with module action given by
		\begin{equation}\label{eq:actionofchiralliealgebraelements}
			\begin{aligned}
				&\left(a\o \frac{z^n}{(z-1)^m}\right)\cdot (v'_3\o v_1\o v_2)\\
				&=-\sum_{j\geq 0}\binom{-m}{j} (-1)^j a'(n-m-j)v'_3\o v_1\o v_2
				+\sum_{j\geq 0} \binom{n}{j}v'_3\o a(j-m)v_1\o v_2\\
				&\ \ \ +\sum_{j\geq 0} \binom{-m}{j}(-1)^{-m-j} v'_3\o v_1\o a(n+j)v_2,
			\end{aligned}
		\end{equation}
		where $v'_3\o v_1\o v_2\in (M^3)'\o_\C M^1\o_\C M^2$.
	\end{prop}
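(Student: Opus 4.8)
The plan is to reduce the proposition to three Lie algebra homomorphism checks and then obtain the last assertion for free. Recall that making a vector space $W$ into a module over a Lie algebra $\mathfrak{g}$ amounts to giving a Lie algebra homomorphism $\mathfrak{g}\to\gl(W)$, and that if $W_1,\dots,W_k$ are $\mathfrak{g}$-modules then $W_1\otimes_\C\cdots\otimes_\C W_k$ is a $\mathfrak{g}$-module via the ``coproduct'' action $\xi\mapsto\sum_i 1\otimes\cdots\otimes\rho_i(\xi)\otimes\cdots\otimes 1$: because operators acting on distinct tensor factors commute, the bracket of two such coproduct operators collapses to $\sum_i 1\otimes\cdots\otimes[\rho_i(\xi),\rho_i(\eta)]\otimes\cdots\otimes 1=\sum_i 1\otimes\cdots\otimes\rho_i([\xi,\eta])\otimes\cdots\otimes 1$. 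Since \eqref{eq:actionofchiralliealgebraelements} is precisely this coproduct for $\rho_\infty,\rho_1,\rho_0$, it suffices to prove that each of $\rho_\infty,\rho_1,\rho_0$ is a well-defined homomorphism out of $\mathcal{L}_{\P^1\bs\{\infty,1,0\}}(V)$. I would also observe at the start that translating the coordinate via $z\mapsto z-1$ turns the computation for $\rho_1$ into the one for $\rho_0$, while passing to the coordinate $1/z$ at $\infty$ turns the computation for $\rho_\infty$ into the analogous one for the contragredient module $(M^3)'$ with its vertex operator $Y_{(M^3)'}$; so essentially there is a single computation to carry out.

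First I would verify well-definedness, i.e.\ that each $\rho_P$ annihilates $\mathrm{Im}\,\nabla$ with $\nabla=L(-1)\otimes\Id+\Id\otimes\frac{d}{dz}$. For a finite point this is the identity
$$\Res\Big(\tfrac{d}{dz}Y_M(a,z)\cdot f(z)+Y_M(a,z)\cdot f'(z)\Big)=\Res\,\tfrac{d}{dz}\big(Y_M(a,z)f(z)\big)=0,$$
using the $L(-1)$-derivative property of $Y_M$ and the vanishing of the residue of a total derivative; at $\infty$ the same argument applies, now invoking the $L(-1)$-derivative property of $Y_{(M^3)'}$ together with the definition of $\vartheta$. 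This also confirms the relation $(L(-1)a)\otimes\frac{z^n}{(z-1)^m}=-a\otimes\frac{d}{dz}\big(\frac{z^n}{(z-1)^m}\big)$ built into Definition~\ref{def:chiralLie}.

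Next comes the main point, bracket preservation. Put $f(z)=\frac{z^n}{(z-1)^m}$, $g(w)=\frac{w^s}{(w-1)^t}$, $F(z,w)=f(z)g(w)$, and take $\xi=a\otimes f$, $\eta=b\otimes g$. Working with $\rho_0$: composing the operators in \eqref{eq:0action} and using that module vertex operators act coefficient-wise, I would rewrite
$$[\rho_0(\xi),\rho_0(\eta)]=\Res_z\Res_w\big(Y_{M^2}(a,z)Y_{M^2}(b,w)-Y_{M^2}(b,w)Y_{M^2}(a,z)\big)\iota_{1,z}(f)\,\iota_{1,w}(g).$$
Since $F$ is regular at $z=w$, its expansions in the regions $|z|>|w|$ and $|w|>|z|$ both equal $\iota_{1,z}(f)\iota_{1,w}(g)$, so the residue form of the Jacobi identity \eqref{eq:resJacobi} (applied to $F$ and then followed by $\Res_w$) converts the right-hand side into $\Res_w\Res_{z-w}Y_{M^2}\big(Y(a,z-w)b,w\big)\iota_{w,z-w}(F)$. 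Expanding $F$ around $z=w$ by $z^n=\sum_{i\ge0}\binom{n}{i}w^{n-i}(z-w)^i$ and $(z-1)^{-m}=\sum_{j\ge0}\binom{-m}{j}(w-1)^{-m-j}(z-w)^j$, inserting $Y(a,z-w)b=\sum_l\big(a(l)b\big)(z-w)^{-l-1}$, and taking $\Res_{z-w}$ (which picks out $l=i+j$) yields
$$\sum_{i\ge0}\sum_{j\ge0}\binom{n}{i}\binom{-m}{j}\Res_w Y_{M^2}\big(a(i+j)b,w\big)\,\iota_{1,w}\!\Big(\tfrac{w^{\,n+s-i}}{(w-1)^{\,m+t+j}}\Big),$$
which by \eqref{eq:0action} equals $\rho_0\big(\sum_{i,j}\binom{n}{i}\binom{-m}{j}(a(i+j)b)\otimes\frac{z^{n+s-i}}{(z-1)^{m+t+j}}\big)=\rho_0([\xi,\eta])$ by \eqref{eq:chiralLiebracket}. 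Replacing $z,w$ by $z-1,w-1$ gives the same for $\rho_1$, and running the computation in the coordinate $1/z$ with $Y_{(M^3)'}$ in place of $Y_{M^2}$ --- the sign and index shift coming from $\vartheta$ and $\Res_{z=\infty}$ --- gives it for $\rho_\infty$, where one additionally uses $Y''_{M^3}=Y_{M^3}$ and the explicit formula \eqref{eq:a'n} for $a'(n)$.

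The step I expect to demand the most care is pure bookkeeping: matching each formal expansion of $F$ (the two radial orderings and $\iota_{w,z-w}$) to the correct ordering of the vertex operators so that \eqref{eq:resJacobi} applies verbatim, and --- for $\rho_\infty$ --- checking that the twist $\vartheta(a)=-e^{zL(1)}(-z^{-2})^{L(0)}(a)$ together with the residue at infinity produces an operator in $\gl((M^3)')$ with the stated formula that closes under the bracket. That last point is the algebraic counterpart of the trivialization of the vertex algebra bundle $\mathscr{V}_{\P^1}$ at $\infty$ used in \cite{FBZ04,DGT24}; it can be checked directly from the contragredient vertex operator axioms of \cite{FHL93}. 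Everything else is a straightforward consequence of the Jacobi identity, which is why the proposition may reasonably be left without a detailed proof.
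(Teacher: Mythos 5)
Your proposal is correct and follows exactly the route the paper indicates (it omits the proof, remarking only that the statement is an immediate consequence of the Jacobi identity together with \eqref{eq:chiralLiebracket}): well-definedness modulo $\mathrm{Im}\,\nabla$ via the $L(-1)$-derivative property and vanishing of residues of total derivatives, and bracket preservation via the residue form of the Jacobi identity expanded around $z=w$, which reproduces \eqref{eq:chiralLiebracket}. The only part deserving extra care is $\rho_\infty$, where the sign conventions in $\vartheta$ and $\Res_{z=\infty}$ must combine with the opposite-order commutator of the primed modes (cf.\ \eqref{eq:primebracket}) to yield a homomorphism rather than an anti-homomorphism; you flag this correctly.
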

	
	\subsection{Space of three-pointed conformal blocks on $\P^1$}
	
	Consider the following datum:
	\begin{equation}\label{eq:datum}
		\Sigma((M^3)', M^1, M^2)=\left(\P^1, (\infty, 1,0),(1/z, z-1,z), ((M^3)',M^1,M^2)\right),
	\end{equation}
	where $1/z$, $z-1$, and $z$ are the local coordinates around the points $\infty,1$, and $0$ on $\P^1$, respectively.
	The contragredient module $(M^3)'$ is attached to $\infty$, and the $V$-modules $M^1$ and $M^2$ are attached to the points $1$ and $0$, respectively.
	Recall that $(M^3)'\o_\C M^1\o_\C M^2$ is a module over the chiral Lie algebra $\mathcal{L}_{\P^1\bs\{\infty,1,0\}}(V)$ via \eqref{eq:actionofchiralliealgebraelements}.
	
	\begin{df}\label{def:cfb}\cite{NT05,FBZ04,DGT24}
		Let $V$ be a VOA, and let $M^1,M^2$, and $M^3$ be $V$-modules. The quotient space
		\begin{equation}\label{eq:defcoinv}
			\mathbb{V}\left(\Sigma((M^3)', M^1, M^2)\right)
			:=\frac{(M^3)'\otimes_\C M^1\otimes_\C M^2}
			{\mathcal{L}_{\P^1\bs\{\infty,1,0\}}(V)\cdot\left((M^3)'\otimes_\C M^1\otimes_\C M^2\right)}
		\end{equation}
		is called the {\em space of coinvariants} associated to the datum $\Sigma((M^3)', M^1, M^2)$.
		The dual space
		\begin{equation}\label{eq:defofconformalblocks}
			\mathscr{C}\left(\Sigma((M^3)', M^1, M^2)\right)
			:= \left(\mathbb{V}\left(\Sigma((M^3)', M^1, M^2)\right)\right)^\ast
		\end{equation}
		is called the {\em space of conformal blocks} associated to $\Sigma((M^3)', M^1, M^2)$.
		We refer to an element $\varphi \in \mathscr{C}\left(\Sigma((M^3)', M^1, M^2)\right)$ as a {\em (three-pointed) conformal block} associated to the datum $\Sigma((M^3)', M^1, M^2)$.
	\end{df}
	
	One can replace the marked point $1$ on $\P^1$ by another point $w\in \C^\times$ on the same chart containing $0$, and define the chiral Lie algebra $\mathcal{L}_{\P^1\bs\{\infty,w,0\}}(V)$ and its actions in the same way as in Definition~\ref{def:chiralLie} and Proposition~\ref{prop:chiralLieaction}, with $1$ replaced by $w$.
	In particular,
	\begin{equation}\label{eq:defwchiral}
		\mathcal{L}_{\P^1\bs\{\infty,w,0\}}(V)=\spn\left\{a\o \frac{z^n}{(z-w)^m}: a\in V,\ m,n\in \Z\right\},
	\end{equation}
	with $
	(L(-1)a)\o \frac{z^n}{(z-w)^m}
	=-a\o \frac{d}{dz}\left(\frac{z^n}{(z-w)^m}\right),$
	and the module action of $\mathcal{L}_{\P^1\bs\{\infty,w,0\}}(V)$ on $(M^3)'\o_\C M^1\o_\C M^2$ is given by
	\begin{equation}\label{eq:wchiralaction}
		\begin{aligned}
			&\left(a\o \frac{z^n}{(z-w)^m}\right)\cdot (v'_3\o v_1\o v_2)\\
			&=-\sum_{j\geq 0}\binom{-m}{j} (-1)^j w^j a'(n-m-j)v'_3\o v_1\o v_2
			+\sum_{j\geq 0} \binom{n}{j} w^{n-j}v'_3\o a(j-m)v_1\o v_2\\
			&\ \ \ +\sum_{j\geq 0} \binom{-m}{j}(-1)^{-m-j} w^{-m-j}v'_3\o v_1\o a(n+j)v_2.
		\end{aligned}
	\end{equation}
	
	Similar to \eqref{eq:defofconformalblocks}, for the datum
	\[
	\Sigma_{w}((M^3)', M^1, M^2):=\left(\P^1, (\infty, w,0),(1/z, z-w,z), ((M^3)',M^1,M^2)\right),
	\]
	we can define the space of conformal blocks $\mathscr{C}\left(\Sigma_{w}((M^3)', M^1, M^2)\right)$ as the vector space of linear functionals on $(M^3)'\o_\C M^1\o_\C M^2$ that are invariant under the action of the chiral Lie algebra $\mathcal{L}_{\P^1\bs\{\infty,w,0\}}(V)$.
	The conformal blocks associated to $\Sigma((M^3)', M^1, M^2)$ and $\Sigma_{w}((M^3)', M^1, M^2)$ are related by the following formula; see \cite[(eq.~4.11)]{GLZ24}:
	\begin{equation}\label{eq:cfbw1iso}
		\begin{aligned}
			\mathscr{C}\left(\Sigma((M^3)', M^1, M^2)\right)&\xrightarrow{\simeq}
			\mathscr{C}\left(\Sigma_{w}((M^3)', M^1, M^2)\right),\quad \varphi_1\mapsto \varphi_w,\\
			\braket{\varphi_w}{v'_3\o v_1\o v_2}&=\braket{\varphi_1}{w^{L(0)-h_3}v'_3\o w^{-L(0)+h_1}v_1\o w^{-L(0)+h_2}v_2}.
		\end{aligned}
	\end{equation}
	
	It is well known that there is a one-to-one correspondence between three-pointed conformal blocks and intertwining operators of VOAs; see \cite{TUY89,Z94,NT05,GLZ24}:
	
	\begin{prop}\label{prop:IOconformalblocks}
		Let $M^1,M^2$, and $M^3$ be $V$-modules. Then there is an isomorphism of vector spaces
		$\mathscr{C}\left(\Sigma((M^3)', M^1, M^2)\right) \cong I\fusion{M^1}{M^2}{M^3}$.
		In particular, the fusion rule $N\fusion{M^1}{M^2}{M^3}$ is equal to the dimension of the space of three-pointed conformal blocks on $\P^1$.
	\end{prop}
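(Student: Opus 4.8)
The plan is to construct mutually inverse linear maps between $I\fusion{M^1}{M^2}{M^3}$ and $\mathscr{C}\left(\Sigma_{1}((M^3)', M^1, M^2)\right)$. To an intertwining operator $I(\cdot,z)$ of type $\fusion{M^1}{M^2}{M^3}$ I would attach the linear functional
\[
\varphi_I\colon (M^3)'\o_\C M^1\o_\C M^2\ra \C,\qquad \varphi_I(v'_3\o v_1\o v_2):=\braket{v'_3}{I(v_1,1)v_2}.
\]
This is well defined because, by the grading property $v_1(n)M^2(m)\ssq M^3(\deg v_1-n-1+m)$ noted after Definition~\ref{def:IO}, the matrix coefficient $\braket{v'_3}{I(v_1,z)v_2}$ is a single (possibly fractional) power of $z$ for homogeneous $v'_3,v_1,v_2$, so evaluating at $z=1$ is unambiguous. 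The substance of the proposition is then that $\varphi_I$ annihilates $\mathcal{L}_{\P^1\bs\{\infty,1,0\}}(V).\big((M^3)'\o_\C M^1\o_\C M^2\big)$, and conversely that every invariant functional is of this form.

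For the invariance it suffices to pair $\varphi_I$ with $\big(a\o\tfrac{z^n}{(z-1)^m}\big).(v'_3\o v_1\o v_2)$ as $a\o\tfrac{z^n}{(z-1)^m}$ ranges over a spanning set of the chiral Lie algebra (Theorem~\ref{thm:main}). By \eqref{eq:actionofchiralliealgebraelements} this is a sum of three groups of terms, coming from $\rho_\infty$, $\rho_1$, $\rho_0$. Put $F(z):=\braket{v'_3}{Y_{M^3}(a,z)I(v_1,1)v_2}$. The Jacobi identity of Definition~\ref{def:IO}, in the residue form \eqref{eq:resJacobi} — which for ordinary modules captures the full Jacobi identity because all matrix coefficients are polynomial — shows that $F(z)$ is the expansion on $|z|>1$ of a rational function with poles only at $z=0,1,\infty$, whose expansion near $z=1$ reproduces the iterate $\braket{v'_3}{I(Y_{M^1}(a,z-1)v_1,1)v_2}$, whose expansion near $z=0$ reproduces $\braket{v'_3}{I(v_1,1)Y_{M^2}(a,z)v_2}$, and whose expansion at $\infty$, after the substitution $a\mapsto\vartheta(a)$, $z\mapsto z^{-1}$, is the one appearing in \eqref{eq:inftyaction}. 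Identifying the expansion operators $\iota_{z,1}$, $\iota_{1,z-1}$, $\iota_{1,z}$ of \eqref{eq:inftyaction}--\eqref{eq:0action} with the regions $|z|>1$, $|z-1|$ small, $|z|$ small, the three groups of terms in $\braket{\varphi_I}{(a\o\tfrac{z^n}{(z-1)^m}).(v'_3\o v_1\o v_2)}$ become exactly $\Res_{z=\infty}$, $\Res_{z=1}$, and $\Res_{z=0}$ of $F(z)\cdot\tfrac{z^n}{(z-1)^m}$, whose sum vanishes by the residue theorem on $\P^1$. Hence $\varphi_I\in\mathscr{C}\left(\Sigma_{1}((M^3)', M^1, M^2)\right)$; the map $I\mapsto\varphi_I$ is clearly linear, and it is injective since $\varphi_I=0$ forces every monomial $\braket{v'_3}{I(v_1,z)v_2}$ to vanish, hence $I=0$.

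For the reverse map, given $\varphi\in\mathscr{C}\left(\Sigma_{1}((M^3)', M^1, M^2)\right)$ I would transport it to the family $\{\varphi_w\}_{w\in\C^\times}$ through \eqref{eq:cfbw1iso} and recover an intertwining operator $I_\varphi(v_1,z)\in\Hom_\C(M^2,M^3)[\![z,z^{-1}]\!]z^{-h}$, with $h=h_1+h_2-h_3$, by reading off the coefficients $\braket{v'_3}{I_\varphi(v_1,w)v_2}$ from the monomial $\braket{\varphi_w}{v'_3\o v_1\o v_2}$, with the normalization $z^{-h}$ dictated by the conformal weights as in Definition~\ref{def:IO}. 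The truncation property of $I_\varphi$ is immediate from the grading of $M^3$; the $L(-1)$-derivative property follows by differentiating the $w$-family and using the relation $(L(-1)a)\o f=-a\o f'$, which holds in $\mathcal{L}_{\P^1\bs\{\infty,1,0\}}(V)$ by the definition \eqref{eq:defofchiralLie} as a quotient by $\mathrm{Im}\,\nabla$. The Jacobi identity for $I_\varphi$ is obtained by running the residue computation of the previous paragraph in reverse: the invariance of each $\varphi_w$ under every element $a\o\tfrac{z^n}{(z-w)^m}\in\mathcal{L}_{\P^1\bs\{\infty,w,0\}}(V)$ is, after the same matching of expansions, precisely the residue-form identity \eqref{eq:resJacobi} for $I_\varphi$, hence the full Jacobi identity. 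Thus $I_\varphi\in I\fusion{M^1}{M^2}{M^3}$, and a direct comparison of the definitions gives $\varphi_{I_\varphi}=\varphi$ and $I_{\varphi_I}=I$, so the two maps are mutually inverse isomorphisms. The final assertion follows since $N\fusion{M^1}{M^2}{M^3}=\dim I\fusion{M^1}{M^2}{M^3}$ by Definition~\ref{def:IO} and $\dim\mathscr{C}\left(\Sigma_{1}((M^3)', M^1, M^2)\right)=\dim\mathbb{V}\left(\Sigma_{1}((M^3)', M^1, M^2)\right)$.

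I expect the main obstacle to be the precise dictionary between conformal-block invariance and the intertwining-operator Jacobi identity. In the forward direction the delicate step is the bookkeeping that identifies the three sums produced by \eqref{eq:actionofchiralliealgebraelements} with the three residues $\Res_{z=\infty}$, $\Res_{z=1}$, $\Res_{z=0}$ of one rational function, keeping careful track of the expansion operators $\iota_{z,1}$, $\iota_{1,z-1}$, $\iota_{1,z}$ and of the contragredient twist $\vartheta$ at $\infty$. In the reverse direction the subtle point is that conformal-block invariance a priori delivers only the residue form \eqref{eq:resJacobi}; one must use that the modules are \emph{ordinary} — so matrix coefficients are polynomial and the residue and formal-variable forms of the Jacobi identity coincide — and that the chosen spanning set of $\mathcal{L}_{\P^1\bs\{\infty,1,0\}}(V)$ is large enough to detect every component of \eqref{eq:resJacobi}. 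These verifications are the content of the corresponding arguments in \cite{FBZ04,NT05} and, in the form closest to the present setup, in \cite{GLZ24}.
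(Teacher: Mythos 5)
The paper does not actually prove Proposition~\ref{prop:IOconformalblocks}; it records it as a known fact and defers to \cite{TUY89,FBZ04,NT05,GLZ24}. Your argument is the standard proof found in those references, and as a sketch it is essentially correct: the functional $\varphi_I=\langle v_3', I(v_1,1)v_2\rangle$ is well defined by the grading, its invariance is the residue theorem applied to the rational function underlying $\langle v_3', Y_{M^3}(a,z)I(v_1,1)v_2\rangle\cdot z^n(z-1)^{-m}$ (rationality being the duality consequence of the intertwining-operator Jacobi identity), and the inverse reconstructs $I$ from the family $\varphi_w$ exactly as the paper later does for generalized intertwining operators in Proposition~\ref{prop:isowrestrictedcfb}. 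Two points are thinner than the rest. First, the $L(-1)$-derivative property of $I_\varphi$ does not follow just from ``differentiating the $w$-family'': you need to combine $w\frac{d}{dw}\braket{\varphi_w}{\cdot}=(\deg v_3'-\deg v_1-\deg v_2)\braket{\varphi_w}{\cdot}$ with invariance under a specific Virasoro element of the chiral Lie algebra (e.g.\ $\om\o(z-w)$), as in the displayed computation inside the proof of Proposition~\ref{prop:isowrestrictedcfb}; this is where the normalization $z^{-h}$ with $h=h_1+h_2-h_3$ is actually forced. Second, the closing identity $\dim\mathscr{C}=\dim\mathbb{V}$ is neither needed nor valid in general (these spaces may be infinite-dimensional a priori); the fusion-rule statement follows directly from $\mathscr{C}\cong I\fusion{M^1}{M^2}{M^3}$. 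Also, your parenthetical that matrix coefficients are ``polynomial'' should read that they are expansions of rational functions in $z$ (they are genuinely infinite series in $z^{-1}$); what ordinariness buys you is polynomiality in $w$ up to the overall factor $w^{-h}$, which is exactly what lets you pass from invariance at each numerical $w$ to the formal-variable Jacobi identity, as you correctly note at the end.
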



		\section{Restriction of the chiral Lie algebra at $\infty$}\label{Sec:3}
		
		In this Section, we introduce the notion of $\infty$-restricted chiral Lie algebra $\mathcal{L}_{\P^1\bs\{0,1\}}(V)_{\leq 0}$
		and its augmented ideal $\mathcal{L}_{\P^1\bs\{0,1\}}(V)_{< 0}$. We discuss their basic properties and give a short list of spanning elements of these Lie algebras. These properties will used to define the $\infty$-restricted conformal blocks in the next Section.

		\subsection{Spanning elements of $\infty$-restricted chiral Lie algebra}	
Let $f(z)=\frac{z^n}{(z-1)^m}$, with $m,n\in \Z$, be a global meromorphic function on $\P^1$ with poles at $0$, $1$, and $\infty$. Then
\[
\iota_{z,1}(f(z))=\sum_{j\geq 0}\binom{-m}{j} (-1)^j z^{n-m-j},
\]
and hence the order of the pole at $\infty$ is
$
\ord_\infty(f(z))=\ord_{0} f(1/z)=n-m.$

\begin{df}\label{def:inftyrestrictedchiralLie}
	Consider the following subspace of the chiral Lie algebra $\mathcal{L}_{\P^1\bs\{\infty,1,0\}}(V)$:
	\begin{equation}\label{eq:defofinftyrestrictchiral}
		\begin{aligned}
			\mathcal{L}_{\P^1\bs\{0,1\}}(V)_{\leq 0}
			&=\spn\{a\o f(z)\in \mathcal{L}_{\P^1\bs\{\infty,1,0\}}(V)
			: a\in V,\ \ord_\infty (f(z))\leq \wt a-1 \}\\
			&=\spn\left\{a\o\frac{z^{n}}{(z-1)^m}\in \mathcal{L}_{\P^1\bs\{\infty,1,0\}}(V)
			: a\in V,\ n-m\leq \wt a-1 \right\}.
		\end{aligned}
	\end{equation}
	We call this the {\em $\infty$-restricted chiral Lie algebra}.
\end{df}

\begin{lm}\label{lm:inftychiralLiestab}
	The following properties hold for $\mathcal{L}_{\P^1\bs\{0,1\}}(V)_{\leq 0}$ defined in \eqref{eq:defofinftyrestrictchiral}.
	\begin{enumerate}
		\item $\mathcal{L}_{\P^1\bs\{0,1\}}(V)_{\leq 0}$ is a Lie subalgebra of $\mathcal{L}_{\P^1\bs\{\infty,1,0\}}(V)$.
		\item $\mathcal{L}_{\P^1\bs\{0,1\}}(V)_{\leq 0}
		\subseteq \mathrm{Stab}_{(M^3)'\otimes_\C M^1\otimes_\C M^2}
		(\Om((M^3)')\o_\C M^1\o_\C M^2)$.
		That is,
		\[
		\left(\mathcal{L}_{\P^1\bs\{0,1\}}(V)_{\leq 0}\right)
		\cdot(\Om((M^3)')\o_\C M^1\o_\C M^2)
		\subseteq \Om((M^3)')\o_\C M^1\o_\C M^2.
		\]
	\end{enumerate}
\end{lm}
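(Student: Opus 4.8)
The plan is to prove both parts by direct computation, using the single bookkeeping device of attaching to each homogeneous spanning element $a\o z^n/(z-1)^m$ the integer $n-m-\wt a$, so that membership in $\mathcal{L}_{\P^1\bs\{0,1\}}(V)_{\leq 0}$ means exactly $n-m-\wt a\leq -1$. Part (1) will follow from the bracket formula \eqref{eq:chiralLiebracket}, and part (2) from the action formula \eqref{eq:actionofchiralliealgebraelements} together with the weight-shift of the operators $a'(k)$ recorded after \eqref{eq:a'n}.

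For (1), I would first reduce by bilinearity to the bracket of two homogeneous spanning elements $a\o z^n/(z-1)^m$ and $b\o z^s/(z-1)^t$ with $n-m\leq\wt a-1$ and $s-t\leq\wt b-1$. Expanding via \eqref{eq:chiralLiebracket}, each summand is, up to a scalar, $(a(i+j)b)\o z^{n+s-i}/(z-1)^{m+t+j}$ with $i,j\geq 0$; discarding those with $a(i+j)b=0$ (which contribute the zero element), the remaining ones are homogeneous with $\wt(a(i+j)b)=\wt a+\wt b-(i+j)-1$, and the new exponent difference satisfies
$$(n+s-i)-(m+t+j)=(n-m)+(s-t)-(i+j)\leq(\wt a-1)+(\wt b-1)-(i+j)=\wt(a(i+j)b)-1 .$$
Hence every summand, and therefore the bracket, lies in $\mathcal{L}_{\P^1\bs\{0,1\}}(V)_{\leq 0}$. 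Along the way I would also observe that the defining inequality $n-m\leq\wt a-1$ is compatible with the relation $(L(-1)a)\o f=-a\o\tfrac{d}{dz}f$ used to pass to the quotient by $\mathrm{Im}\,\nabla$ (differentiating $z^n/(z-1)^m$ lowers $n-m$ by $1$ while $L(-1)$ raises $\wt a$ by $1$), so that the subspace is unambiguously defined.

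For (2), the key input is that, by \eqref{eq:a'n}, $a'(k)$ has weight $-\wt a+k+1$ as an operator, and $(M^3)'=\bigoplus_{p\geq 0}M^3(p)^\ast$ is $\N$-graded with bottom piece $M^3(0)^\ast$; hence $a'(k)\,M^3(0)^\ast\ssq M^3(k-\wt a+1)^\ast$, which is $0$ when $k<\wt a-1$ and is $M^3(0)^\ast$ when $k=\wt a-1$ — in either case it sits inside $M^3(0)^\ast$ as soon as $k\leq\wt a-1$. Then I would take an arbitrary spanning element $a\o z^n/(z-1)^m$ with $n-m\leq\wt a-1$ and an arbitrary $v_3'\o v_1\o v_2\in M^3(0)^\ast\o_\C M^1\o_\C M^2$, and read off from \eqref{eq:actionofchiralliealgebraelements} that the only operators hitting $v_3'$ are $a'(n-m-j)$ with $j\geq 0$; since $n-m-j\leq n-m\leq\wt a-1$, each such term keeps $v_3'$ inside $M^3(0)^\ast$, while the remaining terms leave $v_3'$ alone and merely move $v_1$ within $M^1$ and $v_2$ within $M^2$. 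Thus the action of $a\o z^n/(z-1)^m$ — and, by linearity, of all of $\mathcal{L}_{\P^1\bs\{0,1\}}(V)_{\leq 0}$ — preserves $M^3(0)^\ast\o_\C M^1\o_\C M^2$, which is precisely the stabilizer containment in (2).

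I do not expect a genuine obstacle: the whole argument is additive bookkeeping of degrees. What needs care is just the two reductions — to homogeneous elements in (1), and to the index range $n-m-j\leq\wt a-1$ in (2) — together with the observation that the normalization $n-m\leq\wt a-1$ chosen in Definition~\ref{def:inftyrestrictedchiralLie} is exactly the one making both computations close up (and the same normalization is what will later force $\mathcal{L}_{\P^1\bs\{0,1\}}(V)_{\leq 0}/\mathcal{L}_{\P^1\bs\{0,1\}}(V)_{< 0}\cong L(V)_0$).
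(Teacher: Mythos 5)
Your proposal is correct and follows essentially the same route as the paper: part (1) via the bracket formula \eqref{eq:chiralLiebracket} and the additive estimate $(n+s-i)-(m+t+j)\leq \wt(a(i+j)b)-1$, and part (2) via the degree count $\deg(a'(n-m-j)v'_3)=-\wt a+n-m-j+1\leq -j\leq 0$ applied to the action formula \eqref{eq:actionofchiralliealgebraelements}. Your added remark on compatibility with the relation $(L(-1)a)\o f=-a\o\tfrac{d}{dz}f$ is a sensible extra check, but otherwise the arguments coincide.
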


\begin{proof}
	Let $a\o \frac{z^{n}}{(z-1)^m}$ and $b\o \frac{z^s}{(z-1)^t}\in \mathcal{L}_{\P^1\bs\{1,0\}}(V)_{\leq 0}$, with $n-m\leq \wt a-1$ and $s-t\leq \wt b-1$.
	By \eqref{eq:chiralLiebracket},
	$[a\o \frac{z^n}{(z-1)^m},\, b\o \frac{z^s}{(z-1)^t}]
	=\sum_{i\geq 0}\sum_{j\geq 0} \binom{n}{i} \binom{-m}{j}
	a_{i+j} b\o \frac{z^{n+s-i}}{(z-1)^{m+t+j}},$
	with
	\begin{equation}\label{eq:estimate}
		\begin{aligned}
			(n+s-i)-(m+t+j)
			&=(n-m)+(s-t)-i-j
			\leq \wt a-1+\wt b-1-i-j\\
			&=\wt (a_{i+j}b)-1,
		\end{aligned}
	\end{equation}
	for all $i,j\geq 0$.
	Thus,
	$[a\o \frac{z^n}{(z-1)^m},\, b\o \frac{z^s}{(z-1)^t}]
	\in \mathcal{L}_{\P^1\bs\{1,0\}}(V)_{\leq 0}$,
	and $\mathcal{L}_{\P^1\bs\{1,0\}}(V)_{\leq 0}$ is a Lie subalgebra.
	
	Let $v'_3\o v_1\o v_2\in \Om((M^3)')\o_\C M^1\o_\C M^2$, and
	$a\o \frac{z^{n}}{(z-1)^m}\in \mathcal{L}_{\P^1\bs\{1,0\}}(V)_{\leq 0}$.
	Since
	$$\deg a'(n-m-j)=-\wt a+n-m-j+1\leq -j\leq 0$$
	for any $j\geq 0$, in view of \eqref{eq:defofinftyrestrictchiral}, it follows that
	$\sum_{j\geq 0}\binom{-m}{j} (-1)^j a'(n-m-j)v'_3\in \Om((M^3)')$.
	Then, by \eqref{eq:actionofchiralliealgebraelements}, we have
	\begin{align*}
		&\left(a\o \frac{z^n}{(z-1)^m}\right)\cdot (v'_3\o v_1\o v_2)\\
		&=-\sum_{j\geq 0}\binom{-m}{j} (-1)^j a'(n-m-j)v'_3\o v_1\o v_2
		+\sum_{j\geq 0} \binom{n}{j}v'_3\o a(j-m)v_1\o v_2\\
		&\ \ \ +\sum_{j\geq 0} \binom{-m}{j}(-1)^{-m-j} v'_3\o v_1\o a(n+j)v_2
		\in \Om((M^3)')\o_\C M^1\o_\C M^2.
	\end{align*}
	This shows
	$\mathcal{L}_{\P^1\bs\{0,1\}}(V)_{\leq 0}
	\ssq \mathrm{Stab}_{(M^3)'\otimes_\C M^1\otimes_\C M^2}
	(\Om((M^3)')\o_\C M^1\o_\C M^2)$.
\end{proof}

	\begin{prop}\label{prop:spanningelt}
		The Lie algebra $\mathcal{L}_{\P^1\bs\{0,1\}}(V)_{\leq 0}$ is spanned by the following elements:
		\begin{equation}\label{eq:spanninglements}
			a\o \frac{z^{\wt a}}{z-1},\quad a\o z^{\wt a-k},\quad
			a\in V\ \mathrm{homogeneous},\ k\geq 1.
		\end{equation}
	\end{prop}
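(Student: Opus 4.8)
Since the elements in \eqref{eq:spanninglements} visibly satisfy $n-m\le\wt a-1$ (namely $\wt a-1\le\wt a-1$ and $(\wt a-k)-0\le\wt a-1$ for $k\ge1$), they lie in $\mathcal{L}_{\P^1\bs\{0,1\}}(V)_{\leq 0}$; so the task is the reverse containment. By linearity it suffices to show that each defining spanning element $a\otimes\frac{z^{n}}{(z-1)^m}$ with $a\in V$ homogeneous and $n-m\le\wt a-1$ lies in the span of \eqref{eq:spanninglements}. I will argue by induction on the pole order $m$ at $z=1$, the only tool being the relation $(L(-1)a)\otimes f(z)\equiv -a\otimes f'(z)$ modulo $\mathrm{Im}\,\nabla$ in $\mathcal{L}_{\P^1\bs\{\infty,1,0\}}(V)$ (Definition~\ref{def:chiralLie}).

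\textbf{Step 1: lowering the pole order at $z=1$.} Suppose $m\ge2$ and $a$ is homogeneous. Applying $\nabla$ to $a\otimes\frac{z^{n}}{(z-1)^{m-1}}$ and using $\frac{d}{dz}\bigl(\frac{z^{n}}{(z-1)^{m-1}}\bigr)=\frac{nz^{n-1}}{(z-1)^{m-1}}-\frac{(m-1)z^{n}}{(z-1)^{m}}$, one obtains, modulo $\mathrm{Im}\,\nabla$,
$$(m-1)\,a\otimes\frac{z^{n}}{(z-1)^{m}}=(L(-1)a)\otimes\frac{z^{n}}{(z-1)^{m-1}}+n\,a\otimes\frac{z^{n-1}}{(z-1)^{m-1}}.$$
Since $L(-1)a$ is homogeneous of weight $\wt a+1$, the inequality $n-m\le\wt a-1$ implies $n-(m-1)\le(\wt a+1)-1$ and $(n-1)-(m-1)\le\wt a-1$, so both terms on the right again lie in $\mathcal{L}_{\P^1\bs\{0,1\}}(V)_{\leq 0}$ and have pole order $m-1$ at $z=1$. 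Dividing by $m-1\neq0$ and iterating, the problem reduces to the cases $m\le1$, where at worst $a$ has been replaced by some $L(-1)^{j}a$ (still homogeneous).

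\textbf{Step 2: the base cases $m\le1$.} If $m\le0$, then $\frac{z^{n}}{(z-1)^{m}}=z^{n}(z-1)^{-m}$ is a Laurent polynomial, so $a\otimes\frac{z^{n}}{(z-1)^{m}}$ is a combination of the elements $a\otimes z^{n+i}$ with $0\le i\le -m$; the hypothesis $n-m\le\wt a-1$ forces $n+i\le n-m\le\wt a-1$, so each of these is one of the $a\otimes z^{\wt a-k}$ in \eqref{eq:spanninglements}. If $m=1$, the hypothesis reads $n\le\wt a$: when $n=\wt a$ the element is already $a\otimes\frac{z^{\wt a}}{z-1}$, and when $n<\wt a$ the polynomial identity $\frac{z^{\wt a}-z^{n}}{z-1}=\sum_{j=n}^{\wt a-1}z^{j}$ yields
$$a\otimes\frac{z^{n}}{z-1}=a\otimes\frac{z^{\wt a}}{z-1}-\sum_{j=n}^{\wt a-1}a\otimes z^{j},$$
which is again a combination of elements of \eqref{eq:spanninglements} (each $j$ satisfies $1\le\wt a-j$). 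Combining with Step~1 completes the induction.

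\textbf{Main obstacle.} The delicate point is the bookkeeping of the weight constraint $n-m\le\wt a-1$ through Step~1: the pole-order reduction trades one unit of pole order at $z=1$ for one unit of conformal weight (via $a\rightsquigarrow L(-1)a$), and one must check that this trade keeps \emph{every} intermediate term inside $\mathcal{L}_{\P^1\bs\{0,1\}}(V)_{\leq 0}$ rather than merely inside the ambient chiral Lie algebra—otherwise the displayed identities would not be honest expansions in terms of \eqref{eq:spanninglements}. The computation above shows this trade is exactly balanced, so the induction closes.
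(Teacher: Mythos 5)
Your proof is correct. It uses the same two ingredients as the paper's proof --- the relation $(L(-1)a)\o f(z)=-a\o f'(z)$ coming from $\mathrm{Im}\,\nabla$, and elementary partial-fraction manipulation of $\frac{z^n}{(z-1)^m}$ --- but runs the induction in the opposite direction. The paper starts from the proposed spanning set and shows it generates everything by propagating through the table \eqref{eq:listfornm} of pairs $(n,m)$: the $\nabla$-relation is used to push \emph{down} each column (increasing the pole order $m$), and the identity $a\o\frac{z^n}{(z-1)^m}+a\o\frac{z^n}{(z-1)^{m+1}}=a\o\frac{z^{n+1}}{(z-1)^{m+1}}$ is used to move laterally between columns. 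You instead take an arbitrary element with $n-m\le\wt a-1$ and reduce it: the same $\nabla$-relation, solved for the highest-order pole term, lowers $m$ at the cost of replacing $a$ by $L(-1)a$, and you correctly verify that this trade preserves the defining inequality for both resulting terms; the base cases $m\le 1$ are then handled by the telescoping identity $\frac{z^{\wt a}-z^n}{z-1}=\sum_{j=n}^{\wt a-1}z^j$, which is the paper's lateral move iterated. Your descending induction on the pole order is arguably tidier, since it avoids the case-by-case bookkeeping over the table; the paper's ascending version has the advantage of exhibiting explicitly how each non-red pair is reached from the red ones, a picture it reuses essentially verbatim for Lemma~\ref{lm:spanningofrestricted} and Lemma~\ref{lm:spanningchiralLie}.
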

	
	We create a table for the pairs $(n,m)$ such that $n-m\leq \wt a-1$:
	\begin{equation}\label{eq:listfornm}
		\begin{matrix}
			\ds & (\wt a-3,-2) &(\wt a-2,-1)&\textcolor{red}{(\wt a-1,0)}& \textcolor{red}{(\wt a,1)}&(\wt a+1,2)& \ds \\
			\ds &(\wt a-3,-1)&\textcolor{red}{(\wt a-2,0)}&(\wt a-1,1)& (\wt a,2)& (\wt a+1,3)&\ds \\
			\ds &\textcolor{red}{(\wt a-3,0)}&(\wt a-2,1)&(\wt a-1,2)& (\wt a,3)& (\wt a+1,4)&\ds \\
			\textcolor{red}{\ds}&(\wt a-3,1)& (\wt a-2,2)& (\wt a-1,3)& (\wt a,4) &(\wt a+1,5)&\ds \\
			\ds & \vdots &\vdots & \vdots&\vdots & \vdots &\dots\\
			\ds & (-2) & (-1)& (0) &(1) &(2) &\ds,
		\end{matrix}
	\end{equation}
	wherein the columns are labeled by the indices $(i)$, with $i\in \Z$.
	The pairs $(n,m)=(\wt a,1)$ and $(\wt a-k,0)$, with $k\geq 1$, corresponding to the elements \eqref{eq:spanninglements}, are marked in red in Table~\eqref{eq:listfornm}.
	
	\begin{proof}[proof of Proposition~\ref{prop:spanningelt}]
		Let $\g$ be the subspace of $\mathcal{L}_{\P^1\bs\{0,1\}}(V)_{\leq 0}$ spanned by the elements \eqref{eq:spanninglements}.
		We need to show that $a\o \frac{z^{n}}{(z-1)^m}$, with $(n,m)$ given by the non-red pairs in \eqref{eq:listfornm}, are contained in $\g$, for any $a\in V$.
		By abuse of language, we also say that the pair $(n,m)$ belongs to $\g$ if the corresponding element $a\o \frac{z^{n}}{(z-1)^m}$ belongs to $\g$.
		
		Fix $l\geq 0$. We claim that
		\begin{equation}\label{eq:claim}
			\mathrm{If}\ a\o \frac{z^{\wt a-l}}{z-1}\in \g,\ \forall a\in V,
			\quad \mathrm{then}\quad
			a\o \frac{z^{\wt a-l}}{(z-1)^m}\in \g,\ \forall a\in V,\ m\geq 1.
		\end{equation}
		i.e., if the pair $(\wt a-l,1)$ belongs to $\g$, then all the pairs lying on the same column $(l)$ that are below $(\wt a-l,1)$ are contained in $\g$.
		
		Indeed, since $a\o \frac{z^{\wt a-l}}{z-1}\in \g$ for all $a\in V$ and $\wt(L(-1)a)=\wt a+1$, by \eqref{eq:defofchiralLie} we have
		\begin{align*}
			0&\equiv L(-1)a\o \frac{z^{\wt a+1-l}}{z-1}
			=-a\o \frac{d}{dz}\left(\frac{z^{\wt a+1-l}}{z-1}\right)\\
			&=-a\o \frac{(\wt a-l)z^{\wt a-l}}{z-1}
			+a\o \frac{z^{\wt a-l}}{(z-1)^2}\\
			&\equiv a\o \frac{z^{\wt a-l}}{(z-1)^2} \pmod{\g}.
		\end{align*}
		Hence $(\wt a-l,2)$ belongs to $\g$ for all $a\in V$.
		Proceeding in this way, using induction on $m$, we can show that $(\wt a-l,m)$ belongs to $\g$ for all $m\geq 2$ and $a\in V$.
		This proves Claim~\eqref{eq:claim}.
		In particular, all the pairs on column $(1)$ are contained in $\g$.
		
		On the other hand, we observe that for any $a\in V$ and $l,m\in \Z$, we have
		\[
		a\o \frac{z^{\wt a-l}}{(z-1)^m}
		+a\o \frac{z^{\wt a-l}}{(z-1)^{m+1}}
		=a\o \frac{z^{\wt a-l+1}}{(z-1)^{m+1}}.
		\]
		We use the following graph for the pairs $(n,m)$ to illustrate this property:
		\begin{equation}\label{eq:additionproperty}
			\begin{tikzcd}
				(\wt a-l,m)\arrow[d,dash,"+"]&(\wt a-l+1,m+1)\\
				(\wt a-l,m+1)\arrow[ur,"="']&
			\end{tikzcd}
		\end{equation}
		Using \eqref{eq:additionproperty}, it is easy to see that all the pairs on column $(i)$, with $i\geq 2$, are contained in $\g$.
		Furthermore, apply \eqref{eq:additionproperty} to the triple
		\[
		\begin{matrix}
			\textcolor{red}{(\wt a-1,0)} & \textcolor{red}{(\wt a,1)}\\
			(\wt a-1,1) &
		\end{matrix}
		\]
		We have $(\wt a-1,1)\in \g$.
		Then by Claim~\eqref{eq:claim}, all the pairs on column $(0)$ are contained in $\g$.
		Now apply \eqref{eq:additionproperty} to the triple
		\[
		\begin{matrix}
			\textcolor{red}{(\wt a-2,0)} & (\wt a-1,1)\\
			(\wt a-2,1) &
		\end{matrix}
		\]
		We have $(\wt a-2,1)\in \g$.
		By Claim~\eqref{eq:claim} again, all the pairs on column $(-1)$ that are below $(\wt a-2,0)$ are contained in $\g$.
		Proceeding in this way, we can show that all the pairs below the ones marked in red in Table~\eqref{eq:listfornm} are contained in $\g$.
		By applying \eqref{eq:additionproperty} successively, starting with the triple
		\[
		\begin{matrix}
			(\wt a-2,-1) & \textcolor{red}{(\wt a-1,0)}\\
			\textcolor{red}{(\wt a-2,0)} &
		\end{matrix}
		\]
		we can show that all the pairs above the red ones are contained in $\g$ as well.
		Hence all the pairs in \eqref{eq:listfornm} are contained in $\g$.
	\end{proof}

		\subsection{The augmented ideal of the $\infty$-restricted chiral Lie algebra}
		Inspired by the definition of $O(V)$ in the Zhu algebra $A(V)$ \cite{Z96}, we let
		\begin{equation}
			\mathrm{Ann}_{\mathcal{L}_{\P^1\bs\{0,1\}}(V)_{\leq 0}}(\Om((M^3)')):=\left\{X\in \mathcal{L}_{\P^1\bs\{0,1\}}(V)_{\leq 0}: \rho_\infty(X)(\Om((M^3)'))=0 \right\},
		\end{equation}
		which is clearly an ideal of the Lie algebra $\mathcal{L}_{\P^1\bs\{0,1\}}(V)_{\leq 0}$. 
		
		\begin{df}\label{def:L<0}
		Consider the following subspace of $\mathcal{L}_{\P^1\bs\{0,1\}}(V)_{\leq 0}$
			\begin{equation}\label{eq:defofrestrictchiralLie}
				\begin{aligned}
						\mathcal{L}_{\P^1\bs\{0,1\}}(V)_{< 0}&=\spn\{a\o f(z)\in \mathcal{L}_{\P^1\bs\{\infty,1,0\}}(V)
: a\in V,\ \ord_\infty (f(z))< \wt a-1 \}\\
&=\spn\left\{a\o \frac{z^n}{(z-1)^m}\in \mathcal{L}_{\P^1\bs\{0,1\}}(V)_{\leq 0}:a\in V,\ n-m< \wt a-1\right\}.
				\end{aligned}			
			\end{equation}
We call it the {\em augmented ideal} of the  $\infty$-restricted chiral Lie algebra $\mathcal{L}_{\P^1\bs\{0,1\}}(V)_{\leq 0}$. 
		\end{df}
		
		The fact that $	\mathcal{L}_{\P^1\bs\{0,1\}}(V)_{< 0}$ is an ideal of $\mathcal{L}_{\P^1\bs\{0,1\}}(V)_{\leq 0}$ follows from a similar estimate as \eqref{eq:estimate}, we omit the details.  In the following table, spanning elements of $	\mathcal{L}_{\P^1\bs\{0,1\}}(V)_{< 0}$ correspond to the pairs $(n,m)$ that are lying below the horizontal line: 
		\begin{equation}\label{eq:listforindeal}
			\begin{matrix}
				\ds & (\wt a-3,-2) &(\wt a-2,-1)&\textcolor{red}{	(\wt a-1,0)}&(\wt a,1)&(\wt a+1,2)& \ds \\
				\hline
				\ds &(\wt a-3,-1)&(\wt a-2,0)&	(\wt a-1,1)& (\wt a,2)& (\wt a+1,3)&\ds \\
				\ds &(\wt a-3,0)&	(\wt a-2,1)&	(\wt a-1, 2) & (\wt a,3)& (\wt a+1,4)&\ds \\
				\ds&(\wt a-3,1)& (\wt a-2,2)& (\wt a-1,3)& (\wt a, 4) &(\wt a+1,5)&\ds \\
				\ds 	& \vdots &\vdots &	\vdots&\vdots & \vdots &\dots
			\end{matrix}
		\end{equation}
		
		\begin{lm}\label{lm:propertyofinftychiralLie}
			The ideal	$\mathcal{L}_{\P^1\bs\{0,1\}}(V)_{< 0}$ satisfies the following properties: 
			\begin{enumerate}
				\item 	$\mathcal{L}_{\P^1\bs\{0,1\}}(V)_{< 0}\ssq \mathrm{Ann}_{\mathcal{L}_{\P^1\bs\{0,1\}}(V)_{\leq 0}}(\Om((M^3)'))$. In particular, we have 
				\begin{equation}\label{eq:quotientiso}
					\mathcal{L}_{\P^1\bs\{0,1\}}(V)_{< 0}.\left(\Om((M^3)') \o_\C M^1\o_\C M^2\right)=\Om((M^3)')\o_\C \left(	\mathcal{L}_{\P^1\bs\{0,1\}}(V)_{< 0}.(M^1\o M^2)\right).
				\end{equation}
				\item The quotient Lie algebra $\mathcal{L}_{\P^1\bs\{0,1\}}(V)_{\leq 0}/\mathcal{L}_{\P^1\bs\{0,1\}}(V)_{<0}$ is isomorphic to the subalgebra $L(V)_0$ of the Borcherds' Lie algebra $L(V)$ in Definition~\ref{def:BorLie}. 
			\end{enumerate}
		\end{lm}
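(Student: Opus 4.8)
\emph{Part (1).}
The plan for the first assertion is a degree count. A spanning element $a\o\frac{z^{n}}{(z-1)^{m}}$ of $\mathcal{L}_{\P^1\bs\{0,1\}}(V)_{<0}$ satisfies $n-m\leq\wt a-2$, so by \eqref{eq:inftyaction} its action on $v_3'\in M^3(0)^\ast=(M^3)'(0)$ equals $-\sum_{j\geq0}\binom{-m}{j}(-1)^{j}a'(n-m-j)v_3'$; since $a'(k)$ has weight $-\wt a+k+1$, each term lies in $(M^3)'\bigl((n-m)-\wt a+1-j\bigr)$ with $(n-m)-\wt a+1-j\leq-1-j<0$, hence vanishes because $(M^3)'$ is $\N$-graded. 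This proves $\mathcal{L}_{\P^1\bs\{0,1\}}(V)_{<0}\ssq\mathrm{Ann}_{\mathcal{L}_{\P^1\bs\{0,1\}}(V)_{\leq0}}(M^3(0)^\ast)$. Formula \eqref{eq:quotientiso} will then follow from \eqref{eq:actionofchiralliealgebraelements}: for $X$ in the ideal and $v_3'\in M^3(0)^\ast$ the $\rho_\infty(X)$-summand of $X.(v_3'\o v_1\o v_2)$ is $0$, so $X.(v_3'\o v_1\o v_2)=v_3'\o\bigl(X.(v_1\o v_2)\bigr)$ with $X$ acting on $M^1\o M^2$ via $\rho_1\o\id+\id\o\rho_0$, and passing to spans gives \eqref{eq:quotientiso}.

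\emph{Part (2): constructing the homomorphism.}
By Proposition~\ref{prop:spanningelt}, $\mathcal{L}_{\P^1\bs\{0,1\}}(V)_{\leq0}$ is spanned by the elements $a\o\frac{z^{\wt a}}{z-1}$ and $a\o z^{\wt a-k}$ with $k\geq1$, for $a\in V$ homogeneous. Among these, $a\o z^{\wt a-k}$ with $k\geq2$ lies in $\mathcal{L}_{\P^1\bs\{0,1\}}(V)_{<0}$, and the identity $\frac{z^{\wt a}}{z-1}=z^{\wt a-1}+\frac{z^{\wt a-1}}{z-1}$ together with $a\o\frac{z^{\wt a-1}}{z-1}\in\mathcal{L}_{\P^1\bs\{0,1\}}(V)_{<0}$ gives $a\o\frac{z^{\wt a}}{z-1}\equiv a\o z^{\wt a-1}$ modulo $\mathcal{L}_{\P^1\bs\{0,1\}}(V)_{<0}$. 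Hence the quotient $Q:=\mathcal{L}_{\P^1\bs\{0,1\}}(V)_{\leq0}/\mathcal{L}_{\P^1\bs\{0,1\}}(V)_{<0}$ is spanned by the classes $\overline{a\o z^{\wt a-1}}$, and I would define $\Psi\colon L(V)_0\to Q$ by $\Psi(a_{[\wt a-1]})=\overline{a\o z^{\wt a-1}}$. Comparing \eqref{eq:bracketborcherdslie} with \eqref{eq:chiralLiebracket} specialized to $m=t=0$, and using $\wt(a(i)b)=\wt a+\wt b-i-1$ (so that the exponent $\wt a+\wt b-2-i$ equals $\wt(a(i)b)-1$), one obtains $[\overline{a\o z^{\wt a-1}},\overline{b\o z^{\wt b-1}}]=\sum_{i\geq0}\binom{\wt a-1}{i}\overline{(a(i)b)\o z^{\wt(a(i)b)-1}}$, which is precisely $\Psi$ applied to $[a_{[\wt a-1]},b_{[\wt b-1]}]$. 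So, once well-defined, $\Psi$ is a surjective Lie algebra homomorphism.

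\emph{Part (2): well-definedness and injectivity.}
To finish I would identify both $L(V)_0$ and $Q$ with $V/(L(-1)+L(0))V$. Restricting $\nabla\hat{V}$ to the degree-$0$ part of $\hat{V}$ yields the standard description $L(V)_0\cong V/(L(-1)+L(0))V$, with $a_{[\wt a-1]}$ corresponding to the class of $a$. On the other side, writing $q\colon V\to Q$ for $a\mapsto\overline{a\o z^{\wt a-1}}$, the identity $\nabla(v\o z^{\wt v})=(L(-1)v)\o z^{\wt v}+\wt v\,(v\o z^{\wt v-1})$, together with the fact that $(L(-1)v)\o z^{\wt v}$ is the generator attached to $L(-1)v$, gives $q\bigl((L(-1)+L(0))v\bigr)=\overline{\nabla(v\o z^{\wt v})}=0$, so $(L(-1)+L(0))V\ssq\ker q$; this makes $\Psi$ well-defined (and, as noted, surjective). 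The remaining point, which I expect to be the main obstacle, is the reverse inclusion $\ker q\ssq(L(-1)+L(0))V$: if $\sum_w v_w\o z^{w-1}$ (with $v_w\in V_w$) lies in $\mathrm{Im}\,\nabla$ plus the span of the generators $b\o\frac{z^{s}}{(z-1)^{t}}$ with $s-t<\wt b-1$, then $\sum_w v_w\in(L(-1)+L(0))V$. The cleanest route I see is to use the $\nabla$-stable decomposition $V\o\C[z^{\pm1},(z-1)^{\pm1}]=(V\o\C[z])\op(V\o z^{-1}\C[z^{-1}])\op(V\o(z-1)^{-1}\C[(z-1)^{-1}])$ coming from the principal parts at $\infty$, $0$, and $1$; projecting the relation onto each summand reduces it to the corresponding statement for the Borcherds Lie algebra $L(V)$ itself, and the delicate part is to keep track of which Laurent polynomials in $z$ occur in the $\C[z]$- and $z^{-1}\C[z^{-1}]$-parts of the generators of $\mathcal{L}_{\P^1\bs\{0,1\}}(V)_{<0}$ and to check they produce no new relation among the leading coefficients $v_w\o z^{w-1}$ modulo $\nabla$. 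Granting this, $\ker q=(L(-1)+L(0))V=\ker\bigl(V\to L(V)_0\bigr)$, so $\Psi$ descends to an isomorphism of Lie algebras $L(V)_0\cong\mathcal{L}_{\P^1\bs\{0,1\}}(V)_{\leq0}/\mathcal{L}_{\P^1\bs\{0,1\}}(V)_{<0}$.
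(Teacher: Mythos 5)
Your part (1) is correct and is the same degree-count the paper uses, and your formula \eqref{eq:quotientiso} follows exactly as you say. In part (2) your construction of the surjection $\Psi\colon L(V)_0\to Q$ (spanning set via Proposition~\ref{prop:spanningelt}, the congruence $a\o\frac{z^{\wt a}}{z-1}\equiv a\o z^{\wt a-1}$, the bracket comparison, and the check that $(L(-1)+L(0))V\ssq\ker q$) also matches the paper. The problem is that you never actually prove injectivity: you identify it as ``the main obstacle,'' sketch a decomposition of $V\o\C[z^{\pm1},(z-1)^{\pm1}]$ into principal parts, and then write ``Granting this.'' That conditional step is precisely the substantive content of part (2) --- without it you have only an epimorphism $L(V)_0\twoheadrightarrow Q$, and the sketched reduction (tracking which Laurent polynomials occur in each principal part of the generators of $\mathcal{L}_{\P^1\bs\{0,1\}}(V)_{<0}$ and ruling out new relations among the leading terms modulo $\nabla$) is not carried out, so the proof is incomplete.

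There is a much shorter route, which is the one the paper takes: instead of analyzing $\ker q$, define a map in the opposite direction directly on the generators, $\varphi\bigl(a\o\frac{z^{n}}{(z-1)^{m}}\bigr)=a_{[\wt a-1]}$ if $n-m=\wt a-1$ and $0$ if $n-m<\wt a-1$. Well-definedness only requires checking that $\varphi$ kills the relations coming from $\mathrm{Im}\,\nabla$ inside $\mathcal{L}_{\P^1\bs\{0,1\}}(V)_{\leq0}$, namely
$L(-1)a\o \frac{z^n}{(z-1)^m}=-a\o \frac{n z^{n-1}}{(z-1)^m}+a\o \frac{mz^n}{(z-1)^{m+1}}$ for $n-m\leq\wt(L(-1)a)-1$; when $n-m<\wt a$ both sides map to $0$, and when $n-m=\wt a$ the right side maps to $(-n+m)a_{[\wt a-1]}=-(\wt a)a_{[\wt a-1]}=(L(-1)a)_{[\wt a]}$, matching the left side. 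Since $\varphi$ vanishes on $\mathcal{L}_{\P^1\bs\{0,1\}}(V)_{<0}$ and $\varphi\circ\Psi=\id$ on the classes $a_{[\wt a-1]}$, this produces the inverse without any principal-part bookkeeping. I would suggest replacing your final paragraph with this computation.
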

		\begin{proof}
			Let $a\o\frac{z^n}{(z-1)^m}\in \mathcal{L}_{\P^1\bs\{0,1\}}(V)_{< 0}$. 
			By \eqref{eq:inftyaction} we have
			$$\rho_\infty\left(a\o\frac{z^n}{(z-1)^m}\right)(v'_3)=-\sum_{j\geq 0}\binom{-m}{j} (-1)^j a'(n-m-j)v'_3=0,$$
			for any $v'_3\in \Om((M^3)')$, since $\deg a'(n-m-j)=-\wt a+n-m-j+1<-j\leq 0$ for any $j\geq 0$. Thus, $	\mathcal{L}_{\P^1\bs\{0,1\}}(V)_{< 0}$ annihilates $\Om((M^3)')$. 
			
			Next, we show that $\mathcal{L}_{\P^1\bs\{0,1\}}(V)_{\leq 0}/\mathcal{L}_{\P^1\bs\{0,1\}}(V)_{<0}=\spn\{a\o z^{\wt a-1}+\mathcal{L}_{\P^1\bs\{0,1\}}(V)_{<0}:a\in V \}$. The pair $(\wt a-1,0)$ that corresponds to $a\o z^{\wt a-1}$ is marked in red in \eqref{eq:listforindeal}. Apply \eqref{eq:additionproperty} to the following triple in table \eqref{eq:listforindeal}: 
			$$\begin{matrix}
				\textcolor{red}{(\wt a-1,0)} &(\wt a,1)\\
				\hline
				(\wt a-1,1) &
			\end{matrix}$$
			we see that  $a\o \frac{z^{\wt a}}{z-1}\equiv a\o z^{\wt a-1}\pmod{\mathcal{L}_{\P^1\bs\{0,1\}}(V)_{<0}}$ since the pair $(\wt a-1,1)$ belongs to $\mathcal{L}_{\P^1\bs\{0,1\}}(V)_{<0}$. Then apply \eqref{eq:additionproperty} to the triple:  
			$$\begin{matrix}
				(\wt a,1)&(\wt a+1,2)\\
				\hline
				(\wt a,2) &
			\end{matrix}$$
			we have $a\o \frac{z^{\wt a+1}}{(z-1)^2}\equiv a\o \frac{z^{\wt a}}{z-1}\equiv a\o z^{\wt a-1}\pmod{\mathcal{L}_{\P^1\bs\{0,1\}}(V)_{<0}}$, since the pair $(\wt a,2)$ belongs to $\mathcal{L}_{\P^1\bs\{0,1\}}(V)_{<0}$. 
			Proceed like this, we can show that the elements $a\o\frac{z^n}{(z-1)^m}$ corresponding to all the pairs lying on the first line of \eqref{eq:listforindeal} are congruent to $a\o z^{\wt a-1}$ modulo $\mathcal{L}_{\P^1\bs\{0,1\}}(V)_{<0}$. 
			
			Now we prove (2). First, we observe that $L(-1)a\o z^{\wt a}+\mathcal{L}_{\P^1\bs\{0,1\}}(V)_{<0}=-(\wt a) a\o z^{\wt a-1}+\mathcal{L}_{\P^1\bs\{0,1\}}(V)_{<0}$ by \eqref{eq:defofchiralLie}, and
			\begin{equation}\label{eq:intermed1}
				\begin{aligned}
					&[a\o z^{\wt a-1}+\mathcal{L}_{\P^1\bs\{0,1\}}(V)_{<0}, b\o z^{\wt b-1}+\mathcal{L}_{\P^1\bs\{0,1\}}(V)_{<0}]\\
					&=\sum_{i\geq 0} \binom{\wt a-1}{i} a_i b\o z^{\wt a-1+\wt b-1-i}+ +\mathcal{L}_{\P^1\bs\{0,1\}}(V)_{<0},
				\end{aligned}
			\end{equation}by \eqref{eq:chiralLiebracket}. On the other hand, by Definition~\ref{def:BorLie}, the Lie algebra $L(V)_0=\spn\{a_{[\wt a-1]}:a\in V \}$, subject to the relations $(L(-1)a)_{[\wt a]}=-(\wt a) a_{[\wt a-1]}$ and 
			\begin{equation}\label{eq:intermed2}
				[a_{[\wt a-1]},b_{[\wt b-1]}]=\sum_{j\geq 0} \binom{\wt a-1}{j} (a_jb)_{[\wt a-1+\wt b-1-j]}. 
			\end{equation}
			Comparing \eqref{eq:intermed1} and \eqref{eq:intermed2}, it is easy to see that there is an epimorphism of Lie algebras 
			\begin{equation}\label{eq:quotientiso0}
				\psi: L(V)_0\twoheadrightarrow \mathcal{L}_{\P^1\bs\{0,1\}}(V)_{\leq 0}/\mathcal{L}_{\P^1\bs\{0,1\}}(V)_{<0},\quad a_{[\wt a-1]}\mapsto a\o z^{\wt a-1}+\mathcal{L}_{\P^1\bs\{0,1\}}(V)_{<0}.
			\end{equation} Conversely, we can define a linear map 
			\begin{equation}\label{eq:quotientLieiso}
				\varphi: \mathcal{L}_{\P^1\bs\{0,1\}}(V)_{\leq 0}\ra L(V)_0,\quad \varphi\left(a\o \frac{z^n}{(z-1)^m}\right):=\begin{cases}
					a_{[\wt a-1]}& \mathrm{if}\ n-m=\wt a-1,\\
					0&\mathrm{if}\ n-m<\wt a-1. 
			\end{cases}\end{equation}
			In view of table \eqref{eq:listforindeal}, $\varphi$ sends the spanning elements corresponding to the pairs $(n,m)$ on the first row to $a_{[\wt a-1]}$, and others to $0$. In particular, we have $\varphi(\mathcal{L}_{\P^1\bs\{0,1\}}(V)_{<0})=0$. To show $\varphi$ is well-defined, it suffices to check $\varphi$ preserves the following relations:
			\begin{equation}\label{eq:quotientrel}
				L(-1)a\o \frac{z^n}{(z-1)^m}=-a\o \frac{n z^{n-1}}{(z-1)^m}+a\o \frac{mz^n}{(z-1)^{m+1}},\quad n-m\leq \wt (L(-1)a)-1.
			\end{equation}
			Clearly, $\varphi$  is $0$ on both sides of \eqref{eq:quotientrel} if $n-m<\wt a$. If $n-m=\wt a$, we have 
			\begin{align*}
				&\varphi\left(-a\o \frac{n z^{n-1}}{(z-1)^m}+a\o \frac{mz^n}{(z-1)^{m+1}}\right)=-n a_{[\wt a-1]}+m a_{[\wt a-1]}=-(\wt a)a_{[\wt a-1]}\\
				&=(L(-1)a)_{[\wt a]}=\varphi\left(L(-1)a\o \frac{z^n}{(z-1)^m}\right).
			\end{align*}
			Thus, $\varphi$ in \eqref{eq:quotientLieiso} induces an inverse map of $\psi$ in \eqref{eq:quotientiso0}, which shows the isomorphism of Lie algebras $\mathcal{L}_{\P^1\bs\{0,1\}}(V)_{\leq 0}/\mathcal{L}_{\P^1\bs\{0,1\}}(V)_{<0}\cong L(V)_0$. 
		\end{proof}
		
		By adopting a similar argument as the proof of Proposition~\ref{prop:spanningelt}, we can easily prove the following fact about the spanning elements of $\mathcal{L}_{\P^1\bs\{0,1\}}(V)_{<0}$. 
		
		\begin{lm}\label{lm:spanningofrestricted}
			The augmented ideal $\mathcal{L}_{\P^1\bs\{0,1\}}(V)_{<0}$ is spanned by the following elements 
			\begin{equation}\label{eq:spannideal}
				a\o\frac{z^{\wt a-1}}{z-1},\quad a\o z^{\wt a-k},\quad a\in V\ \mathrm{homogeneous},\ k\geq 2
			\end{equation}
		\end{lm}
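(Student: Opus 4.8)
The plan is to adapt the proof of Proposition~\ref{prop:spanningelt}, running the same filling argument below the horizontal line of table~\eqref{eq:listforindeal} instead of below the red entries of table~\eqref{eq:listfornm}. Let $\g'$ denote the span of the elements listed in \eqref{eq:spannideal}. The inclusion $\g'\ssq\mathcal{L}_{\P^1\bs\{0,1\}}(V)_{<0}$ is clear, since the pairs $(n,m)=(\wt a-1,1)$ and $(\wt a-k,0)$ with $k\ge2$ all satisfy $n-m<\wt a-1$. So the content is the reverse inclusion: every $a\o\frac{z^n}{(z-1)^m}$ with $a$ homogeneous and $n-m<\wt a-1$ — that is, every pair strictly below the line in \eqref{eq:listforindeal} — must be shown to lie in $\g'$.

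The two moves from Proposition~\ref{prop:spanningelt} remain available. First, claim~\eqref{eq:claim} holds with $\g$ replaced by $\g'$: if $a\o\frac{z^{\wt a-l}}{z-1}\in\g'$ for all homogeneous $a$, then $a\o\frac{z^{\wt a-l}}{(z-1)^m}\in\g'$ for all homogeneous $a$ and all $m\ge1$. Its proof is unchanged, using only the relation $(L(-1)a)\o f=-a\o\frac{d}{dz}f$ from \eqref{eq:defofchiralLie} together with induction on $m$. What makes the adaptation work is that the hypothesis of this claim holds \emph{outright} for $l=1$, because $a\o\frac{z^{\wt a-1}}{z-1}$ is one of the proposed generators. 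Second, the identity \eqref{eq:additionproperty}, $a\o\frac{z^n}{(z-1)^m}+a\o\frac{z^n}{(z-1)^{m+1}}=a\o\frac{z^{n+1}}{(z-1)^{m+1}}$, lets us deduce membership of any one of these three elements in $\g'$ from membership of the other two.

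With this setup the filling proceeds as in Proposition~\ref{prop:spanningelt}, shifted down one row. Starting from $a\o\frac{z^{\wt a-1}}{z-1}\in\g'$, claim~\eqref{eq:claim} with $l=1$ fills the part of column $(0)$ lying below the line in \eqref{eq:listforindeal}, and iterated use of \eqref{eq:additionproperty} then fills every column $(i)$ with $i\ge1$ below the line by moving rightward. For the columns $(i)$ with $i\le-1$, move one column to the left at a time, exactly as in the triple arguments of Proposition~\ref{prop:spanningelt}: the generator $a\o z^{\wt a-k}$ (suitable $k\ge2$) together with an already-established entry in the column immediately to its right produces, via \eqref{eq:additionproperty}, the $(z-1)^{-1}$-entry of the new column; claim~\eqref{eq:claim} then fills that column downward; and a short chain of further applications of \eqref{eq:additionproperty}, seeded by the generators $a\o z^{\wt a-k}$ and by the already-known column on the right, fills the new column upward toward — but never onto — the line. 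Running this over all columns exhausts the region below the line, so $\mathcal{L}_{\P^1\bs\{0,1\}}(V)_{<0}\ssq\g'$, hence equality.

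I expect the only real work to be the bookkeeping in the last step: one must check that the entire triangular region below the line in \eqref{eq:listforindeal} is genuinely reachable from \eqref{eq:spannideal}, and that no step covertly calls on an element $a\o z^{\wt a-1}$ lying on the line itself (which is excluded from the ideal). This is precisely why $a\o\frac{z^{\wt a-1}}{z-1}$, rather than $a\o z^{\wt a-1}$, is taken as a generator to seed column $(0)$; the identity $a\o\frac{z^{\wt a}}{z-1}=a\o z^{\wt a-1}+a\o\frac{z^{\wt a-1}}{z-1}$ also makes transparent how \eqref{eq:spannideal} arises from the generators of $\mathcal{L}_{\P^1\bs\{0,1\}}(V)_{\le0}$ in Proposition~\ref{prop:spanningelt}. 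Beyond this, no new idea is needed.
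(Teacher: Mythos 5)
Your proposal is correct and follows exactly the route the paper intends: the paper omits the proof of this lemma, stating only that it is "a similar argument as the proof of Proposition~\ref{prop:spanningelt}", and your adaptation — seeding column $(0)$ with $a\o\frac{z^{\wt a-1}}{z-1}$, filling columns downward via the $L(-1)$-relation and sideways/upward via \eqref{eq:additionproperty}, all while staying strictly below the line of \eqref{eq:listforindeal} — is precisely that argument, carried out with the correct care about never invoking $a\o z^{\wt a-1}$.
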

		\begin{equation}\label{eq:listforindealspanning}
			\begin{matrix}
				\ds & (\wt a-3,-2) &(\wt a-2,-1)&(\wt a-1,0)&(\wt a,1)&(\wt a+1,2)& \ds \\
				\hline
				\ds &(\wt a-3,-1)&\textcolor{red}{	(\wt a-2,0)}&	\textcolor{red}{	(\wt a-1,1)}& (\wt a,2)& (\wt a+1,3)&\ds \\
				\ds &\textcolor{red}{	(\wt a-3,0)}&	(\wt a-2,1)&	(\wt a-1, 2) & (\wt a,3)& (\wt a+1,4)&\ds \\
				\textcolor{red}{		\ds}&(\wt a-3,1)& (\wt a-2,2)& (\wt a-1,3)& (\wt a, 4) &(\wt a+1,5)&\ds \\
				\ds 	& \vdots &\vdots &	\vdots&\vdots & \vdots &\dots
			\end{matrix}
		\end{equation}
		The pairs $(n,m)$ corresponding to the spanning elements \eqref{eq:spannideal} are marked in red in table \eqref{eq:listforindealspanning}.


		\section{Space of $\infty$-restricted three-pointed conformal blocks}\label{Sec:4}
	We introduce the notion of $\infty$-restricted three-pointed conformal blocks on $\P^1$ in this section using the $\infty$-restricted chiral Lie algebra $\mathcal{L}_{\P^1\bs\{0,1\}}(V)_{\leq 0}$. We then use the ideal $\mathcal{L}_{\P^1\bs\{0,1\}}(V)_{<0}$ to define the contracted tensor product $M^1\odot M^2$. This space is closely related to the left $A(V)$-module $A(M^1)\o_{A(V)}\Om(M^2)$ \cite{FZ92}; we discuss their relations at the end of this section.
	
	\subsection{$\infty$-restricted three-pointed conformal blocks on $\P^1$}
	
	We restrict the module $(M^3)'$ in the datum \eqref{eq:datum} to its bottom degree and obtain the following datum:
	\begin{equation}
		\Sigma(\Om((M^3)'), M^1, M^2)
		=\left(\P^1, (\infty, 1,0),(1/z, z-1,z), (\Om((M^3)'),M^1,M^2)\right).
	\end{equation}
	Note that $\Om((M^3)')$ is naturally a right module over the Zhu algebra $A(V)$ and a left module over $A(V)$ via the involution
	$\theta: A(V)\ra A(V)$, $[a]\mapsto [e^{L(1)}(-1)^{L(0)}a]$.
	
	Consider the $\infty$-restricted chiral Lie algebra $\mathcal{L}_{\P^1\bs\{0,1\}}(V)_{\leq 0}$ introduced in the previous section.
	It follows from Lemma~\ref{lm:inftychiralLiestab} that
	\[
	\mathcal{L}_{\P^1\bs\{0,1\}}(V)_{\leq 0}\cdot
	(\Om((M^3)')\o_\C M^1\o_\C M^2)
	\ssq
	(\Om((M^3)')\o_\C M^1\o_\C M^2).
	\]
	
	\begin{df}\label{def:restrictedcfb}
		The vector space
		\begin{equation}\label{eq:restrictedconformalblocks}
			\mathscr{C}_{\mathrm{res}}\left(\Sigma(\Om((M^3)'), M^1, M^2)\right)
			:= \left(
			\frac{\Om((M^3)')\otimes_\C M^1\otimes_\C M^2}
			{\mathcal{L}_{\P^1\bs\{0,1\}}(V)_{\leq 0}\cdot
				\left(\Om((M^3)')\otimes_\C M^1\otimes_\C M^2\right)}
			\right)^\ast
		\end{equation}
		is called the {\em space of (three-pointed) $\infty$-restricted conformal blocks on $\P^1$}. An element
		$\varphi \in \mathscr{C}_{\mathrm{res}}\left(\Sigma(\Om((M^3)'), M^1, M^2)\right)$
		is called a {\em (three-pointed) $\infty$-restricted conformal block} associated to the datum
		$\Sigma(\Om((M^3)'), M^1, M^2)$.
	\end{df}
	
	We want to express the right-hand side of \eqref{eq:restrictedconformalblocks} in terms of a Hom space. We observe the following facts from the representation theory of Lie algebras. The proof is standard, and we omit the details.
	
	\begin{lm}\label{lm:factabouthomspaceLie}
	Let $\g$ be a Lie algebra, and let $W$, $M$, and $U$ be $\g$-modules. Let $U^\ast$ be the dual $\g$-module of $U$. Then
		\begin{enumerate}
			\item There is an isomorphism of vector spaces:
			\begin{equation}\label{eq:liemodulehom}
				\Hom_{\C}(U\o_\C W/ \g.(U\o_\C W),\C)\cong \Hom_{\g}(W,U^\ast),
			\end{equation}
			where $U\o_\C W$ is the tensor product of $\g$-modules.
			\item Let $O(\g)\leq \g$ be an ideal of $\g$. Then $M/O(\g).M$ is a $\g/O(\g)$-module, and we have an isomorphism of vector spaces:
			\begin{equation}\label{eq:liequotient}
				M/\g.M\cong (M/O(\g).M)/(\g/O(\g)).(M/O(\g).M).
			\end{equation}
		\end{enumerate}
	\end{lm}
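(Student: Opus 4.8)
The plan is to prove the two parts independently; each is a purely formal manipulation of quotients and duals, so the "standard argument" alluded to after the statement can be spelled out in a few lines. Part (1) rests on recognizing the double-dual quotient as a space of invariant functionals, and part (2) is the third isomorphism theorem for modules over a Lie algebra.

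For part (1), the starting observation is that for any $\g$-module $N$, a linear functional $f\colon N\to\C$ descends to $N/\g.N$ exactly when $f$ vanishes on $\g.N$, i.e. when $f$ is $\g$-invariant with respect to the trivial action on $\C$; hence $\Hom_\C(N/\g.N,\C)\cong\Hom_\g(N,\C)$. Taking $N=U^\ast\o_\C W$, I would then exhibit the comparison map $\Hom_\g(U^\ast\o_\C W,\C)\to\Hom_\g(W,U)$ through the tensor--hom adjunction: a functional $f$ corresponds to the linear map $\varphi\colon W\to U^{\ast\ast}=U$ determined by $\langle\xi,\varphi(w)\rangle=\langle f,\xi\o w\rangle$ for $\xi\in U^\ast$, $w\in W$, where finite-dimensionality of $U$ is used to identify $U^{\ast\ast}$ with $U$ canonically. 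The one genuine verification is that $\g$-invariance of $f$ matches $\g$-equivariance of $\varphi$: expanding $\langle f,X.(\xi\o w)\rangle=\langle f,(X.\xi)\o w\rangle+\langle f,\xi\o(X.w)\rangle$ and substituting $\langle X.\xi,u\rangle=-\langle\xi,X.u\rangle$ turns the invariance condition into $\langle\xi,\varphi(X.w)-X.\varphi(w)\rangle=0$ for all $\xi\in U^\ast$; since $U$ is finite-dimensional, $U^\ast$ separates points of $U$, so this is precisely $\varphi(X.w)=X.\varphi(w)$. Both assignments $f\mapsto\varphi$ and its evident inverse are natural in $W$, which is what is needed for the later identification of conformal-block spaces.

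For part (2), I would first check that $O(\g).M$ is a $\g$-submodule of $M$: for $X\in\g$ and $Y\in O(\g)$ we have $X.(Y.m)=Y.(X.m)+[X,Y].m\in O(\g).M$ since $O(\g)$ is an ideal. Thus $M/O(\g).M$ is a $\g$-module on which $O(\g)$ acts trivially, hence a $\g/O(\g)$-module. Next, because $O(\g)\subseteq\g$ we have $O(\g).M\subseteq\g.M$, and the image of $\g.M$ under $M\twoheadrightarrow M/O(\g).M$ is $\g.M/O(\g).M$, which is exactly the submodule $(\g/O(\g)).(M/O(\g).M)$. The claimed isomorphism is then the third isomorphism theorem, $\bigl(M/O(\g).M\bigr)\bigm/\bigl(\g.M/O(\g).M\bigr)\cong M/\g.M$, and it is again manifestly natural.

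I do not anticipate a real obstacle: the argument is soft throughout. The single point where a hypothesis rather than pure formalism enters is the identification $U^{\ast\ast}\cong U$ together with the separation of points of $U$ by $U^\ast$ in part (1), both of which require $U$ to be finite-dimensional; so the plan is to invoke that assumption explicitly at that step and otherwise proceed by diagram-chasing.
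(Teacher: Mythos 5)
Your proof is correct and is essentially the argument the paper intends: the paper omits the proof of this lemma as ``standard'' and points to the proof of Theorem~\ref{thm:VarintGNDT}, which carries out the same identification of invariant functionals on $U^\ast\o_\C W$ with $\g$-maps $W\to U$ (there in an infinite-dimensional variant, written out with explicit dual bases rather than your cleaner tensor--hom adjunction), and part (2) is the third isomorphism theorem exactly as you state. Your explicit use of finite-dimensionality of $U$ for $U^{\ast\ast}\cong U$ and for $U^\ast$ separating points is the right place to invoke that hypothesis.
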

	
	We will apply Lemma~\ref{lm:factabouthomspaceLie} to the following datum:
	\begin{equation}\label{eq:lmdatum}
		\begin{aligned}
			&\g= \mathcal{L}_{\P^1\bs\{0,1\}}(V)_{\leq 0},&& W=M^1\o_\C M^2, && M=\Om((M^3)')\o_\C M^1\o_\C M^2, \\
			&U=\Om((M^3)'),&& O(\g)=\mathcal{L}_{\P^1\bs\{0,1\}}(V)_{< 0}.&&
		\end{aligned}
	\end{equation}
	
	By Proposition~\ref{prop:chiralLieaction}, $M^1\o_\C M^2$ is the usual tensor product module over the Lie algebra $\mathcal{L}_{\P^1\bs\{0,1\}}(V)_{\leq 0}$, with the module action given by
	
	\begin{equation}\label{eq:rho10}
		\begin{aligned}
			\rho_{1,0}:\mathcal{L}_{\P^1\bs\{0,1\}}(V)_{\leq 0}&\ra \mathfrak{gl}(M^1\o_\C M^2),\\
			\rho_{1,0}(X)(v_1\o v_2)&=\rho_1(X)(v_1)\o v_2+ v_1\o \rho_0(X)(v_2),
		\end{aligned}
	\end{equation}
	for $X\in \mathcal{L}_{\P^1\bs\{0,1\}}(V)_{\leq 0}$, $v_1\in M^1$, and $v_2\in M^2$, where $\rho_1$ and $\rho_0$ are given by \eqref{eq:1action} and \eqref{eq:0action}, respectively. Thus, the ideal $\mathcal{L}_{\P^1\bs\{0,1\}}(V)_{<0}$ also acts on $M^1\o_\C M^2$.
	
	\begin{df}\label{df:M1odotM2}
		Let $M^1$ and $M^2$ be $V$-modules. Define
		\begin{equation}\label{eq:defofodot}
			\begin{aligned}
				M^1\odot M^2:&=(M^1\o_\C M^2)/\mathcal{L}_{\P^1\bs\{0,1\}}(V)_{< 0}.(M^1\o_\C M^2).
			\end{aligned}
		\end{equation}
		We call it the {\em contracted tensor product of $M^1$ and $M^2$}.
	\end{df}
	
	We remark on the following basic facts about the contracted tensor product $M^1\odot M^2$:
	\begin{enumerate}
		\item By Lemma~\ref{lm:spanningofrestricted}, together with \eqref{eq:actionofchiralliealgebraelements}, $M^1\odot M^2$ is spanned by the symbols $v_1\odot v_2$, which are bilinear in $v_1$ and $v_2$, with $v_1\in M^1$ and $v_2\in M^2$, subject to the following relations:
		\begin{align}
			\sum_{j\geq 0}\binom{\wt a-1}{j} a(j-1)v_1\odot v_2&=v_1\odot \sum_{j\geq 0} a(\wt a-1+j)v_2,\label{eq: actionspanning1}\\
			\sum_{j\geq 0} \binom{\wt a-k}{j} a(j)v_1\odot v_2&=-v_1\odot a(\wt a-k)v_2,\quad k\geq 2,\label{eq: actionspanning2}
		\end{align}
		for any $a\in V$, $v_1\in M^1$, and $v_2\in M^2$.
		\item By Lemmas~\ref{lm:propertyofinftychiralLie} and \ref{lm:factabouthomspaceLie}, $M^1\odot M^2$ is a module over the Lie algebra $L(V)_0$, with the module action induced by the map $\rho_{1,0}$ in \eqref{eq:rho10}:
		\begin{equation}\label{eq:repofLV0onM1odotM2}
			\overline{\rho_{1,0}}: L(V)_0\cong \mathcal{L}_{\P^1\bs\{0,1\}}(V)_{\leq 0}/\mathcal{L}_{\P^1\bs\{0,1\}}(V)_{< 0}\ra \gl(M^1\odot M^2).
		\end{equation}
		In particular, $a_{[\wt a-1]}.(v_1\odot v_2)=\overline{\rho_{1,0}}(a\o z^{\wt a-1}+\mathcal{L}_{\P^1\bs\{0,1\}}(V)_{< 0})(v_1\odot v_2)$, and we can write
		\begin{equation}\label{eq: L(V)0action}
			a_{[\wt a-1]}.(v_1\odot v_2)=\sum_{j\geq 0}\binom{\wt a-1}{j} a(j)v_1\odot v_2+ v_1\odot o(a)v_2,
		\end{equation}
		for any $a_{[\wt a-1]}\in L(V)_0$, $v_1\in M^1$, and $v_2\in M^2$, where $o(a)=a(\wt a-1)$.
		\item More generally, for $s,t\in \Z$ such that $s+t>1$, we have $a\o \frac{z^{\wt a-s}}{(z-1)^t}\in \mathcal{L}_{\P^1\bs\{0,1\}}(V)_{< 0}$ by \eqref{eq:defofrestrictchiralLie}. In particular, the following relations hold in $M^1\odot M^2$:
		\begin{equation}\label{eq:actionfogeneralspann}
			\begin{aligned}
				\rho_1\left(a\o\frac{z^{\wt a-s}}{(z-1)^t}\right)(v_1)\odot v_2
				&=-v_1\odot \rho_0\left(a\o \frac{z^{\wt a-s}}{(z-1)^t}\right)(v_2),\quad \mathrm{equivalently,}\\
				\sum_{j\geq 0} \binom{\wt a-s}{j} a(j-t)v_1\odot v_2
				&=-v_1\odot \sum_{j\geq 0}\binom{-t}{j} (-1)^{t+j} a(\wt a-s+j)v_2.
			\end{aligned}
		\end{equation}
	\end{enumerate}
	
	Note that the left $A(V)$-module $\Om(M^3)$ is also a module over the Lie algebra $L(V)_0$ via the Lie algebra homomorphism
	$L(V)_0\ra A(V)_{\mathrm{Lie}}$, $a_{[\wt a-1]}\mapsto [a]$, see \cite{DLM98}.
	
		\begin{prop}\label{prop:inftycfbHom}
   Let $M^1,M^2,M^3$ be $V$-modules such that $\Om((M^3)')^\ast\cong\Om(M^3)$, which is the case when $M^3$ is an irreducible ordinary $V$-module.  
There is an isomorphism of vector spaces:
\begin{equation}\label{eq:inftyconfrmalblockshom}
	\mathscr{C}_{\mathrm{res}}\left(\Sigma(\Om((M^3)'), M^1, M^2)\right)\cong 
	\Hom_{L(V)_0}\left(M^1\odot M^2, \Om(M^3)\right).
\end{equation}
		\end{prop}
		\begin{proof}
			Apply Lemma~\ref{lm:factabouthomspaceLie} to the datum \eqref{eq:lmdatum}. We have
			\begin{align*}
				&\mathscr{C}_{\mathrm{res}}\left(\Sigma(\Om((M^3)'), M^1, M^2)\right)= \left(\frac{\Om((M^3)')\otimes_\C M^1\otimes_\C M^2}{\mathcal{L}_{\P^1\bs\{0,1\}}(V)_{\leq 0}.\left(\Om((M^3)')\otimes_\C M^1\otimes_\C M^2\right)}\right)^\ast\\
				&\cong \Hom_\C \left(\frac{(\Om((M^3)')\otimes_\C M^1\otimes_\C M^2)/ 	\mathcal{L}_{\P^1\bs\{0,1\}}(V)_{< 0}.(\Om((M^3)')\otimes_\C M^1\otimes_\C M^2)}{(\mathcal{L}_{\P^1\bs\{0,1\}}(V)_{\leq 0}/\mathcal{L}_{\P^1\bs\{0,1\}}(V)_{< 0}).\mathrm{(The\ Numerator)}}, \C\right)\quad (\mathrm{by}\ \eqref{eq:liequotient})\\
				&\cong \Hom_{\C}\left(\frac{(\Om((M^3)')\otimes_\C M^1\otimes_\C M^2)/\left(\Om((M^3)')\o_\C 	\mathcal{L}_{\P^1\bs\{0,1\}}(V)_{< 0}.(M^1\o M^2)\right)}{(L(V)_0).\mathrm{(The\ Numerator)}},\C\right) \quad (\mathrm{by}\ \eqref{eq:quotientiso})\\
				&\cong \Hom_\C \left(\frac{\Om((M^3)') \o _\C \left(M^1\odot M^2\right)}{(L(V)_0).\left(\Om((M^3)') \o _\C \left(M^1\odot M^2\right)\right)},\C \right) \quad (\mathrm{by}\ \eqref{eq:defofodot})\\
				&\cong\Hom_{L(V)_0}\left(M^1\odot M^2, \Om((M^3)')^\ast\right)\quad (\mathrm{by}\ \eqref{eq:liemodulehom})\\
                &\cong  \Hom_{L(V)_0}\left(M^1\odot M^2, \Om(M^3)\right),
			\end{align*}
			where we used the isomorphism $\mathcal{L}_{\P^1\bs\{0,1\}}(V)_{\leq 0}/\mathcal{L}_{\P^1\bs\{0,1\}}(V)_{< 0}\cong L(V)_0$ in Lemma~\ref{lm:propertyofinftychiralLie}.
		\end{proof}

		\subsection{The $L(V)_0$-module $M^1\odot M^2$ and the $A(V)$-action}
		We discuss some basic properties of the $L(V)_0$-module $M^1\odot M^2$ in Definition~\ref{df:M1odotM2}. In particular, we show that $M^1\odot M^2$ is also a left module over the Zhu algebra $A(V)$ if either $M^1$ or $M^2$ is generated by its bottom degree.
		
		\begin{lm}\label{lm:spannodot}
			Let $M^1,M^2$ be $V$-modules.
			\begin{enumerate}
				\item If $M^2$ is generated by $\Om(M^2)$, then
				\begin{equation}\label{eq:M1odotM2spann1}
					M^1\odot M^2=\spn\{v_1\odot v^2: v_1\in M^1, v^2\in \Om(M^2)\}.
				\end{equation}
				\item If $M^1$ is generated by $\Om(M^1)$, then
				\begin{equation}\label{eq:M1odotM2spann2}
					M^1\odot M^2=\spn\{v^1\odot v_2: v^1\in \Om(M^1), v_2\in M^2\}.
				\end{equation}
			\end{enumerate}
			In particular, \eqref{eq:M1odotM2spann1} (resp. \eqref{eq:M1odotM2spann2}) holds if $M^2$ (resp. $M^1$) is an irreducible $V$-module.
		\end{lm}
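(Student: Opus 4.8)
The plan is to prove both parts by using the defining relations \eqref{eq: actionspanning1}, \eqref{eq: actionspanning2} and their generalizations \eqref{eq:actionfogeneralspann} of $M^1\odot M^2$ to ``sweep'' the modes acting on one tensor factor onto the other. The one external fact I would invoke is that a $V$-module $N$ generated by its bottom degree $N(0)$ is spanned by the single-mode vectors $a(p)u$ with $a\in V$ homogeneous, $p\in\Z$ and $u\in N(0)$: this holds because $N$ is a quotient of the generalized Verma module $\bar M(N(0))$, which admits such a spanning set by the associativity argument recalled in Section~\ref{sec:2.1.5} (see \cite{LL04}), and it covers the irreducible case as well, since an irreducible module is generated by its bottom degree; hence the final sentence of the lemma follows from parts (1) and (2). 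Write $W_1$ and $W_2$ for the right-hand sides of \eqref{eq:M1odotM2spann1} and \eqref{eq:M1odotM2spann2}; each is a subspace of $M^1\odot M^2$, and since $M^1\odot M^2$ is spanned by the bilinear symbols $v_1\odot v_2$, it suffices to prove $W_1=M^1\odot M^2$ in (1) and $W_2=M^1\odot M^2$ in (2).

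For (1): using single-mode generation of $M^2$, the space $M^1\odot M^2$ is spanned by the vectors $v_1\odot(a(p)w^2)$ with $v_1\in M^1$, $a\in V$ homogeneous, $p\in\Z$ and $w^2\in M^2(0)$. By the grading axiom $a(p)w^2\in M^2(\wt a-p-1)$, so this vector is $0$ when $p\geq\wt a$ and lies in $W_1$ when $p=\wt a-1$, because then $a(p)w^2=o(a)w^2\in M^2(0)$. When $p\leq\wt a-2$, relation \eqref{eq: actionspanning2} with $k=\wt a-p\geq 2$ and $v_2=w^2$ rewrites $v_1\odot(a(p)w^2)$ as $-\sum_{j\geq 0}\binom{p}{j}(a(j)v_1)\odot w^2$, a finite sum (by the truncation property) of vectors in $W_1$. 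Hence $M^1\odot M^2=W_1$.

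For (2): similarly $M^1\odot M^2$ is spanned by the vectors $(a(p)v^1)\odot v_2$ with $v^1\in M^1(0)$, $a\in V$ homogeneous, $p\in\Z$ and $v_2\in M^2$; the cases $p\geq\wt a$ (the vector vanishes) and $p=\wt a-1$ (then $o(a)v^1\in M^1(0)$, so the vector lies in $W_2$) are trivial, so fix $p\leq\wt a-2$ and a large integer $t$. For every integer $s$ with $s+t>1$, applying \eqref{eq:actionfogeneralspann} with $v_1=v^1$ gives a right-hand side of the form $-v^1\odot w$ with $w\in M^2$, which lies in $W_2$; hence $\sum_{j\geq 0}\binom{\wt a-s}{j}(a(j-t)v^1)\odot v_2\in W_2$. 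Since $a(j-t)v^1$ vanishes for $j\geq\wt a+t$ and equals $o(a)v^1\in M^1(0)$ for $j=\wt a+t-1$, this yields, upon setting $p=j-t$, a relation modulo $W_2$ among the finitely many vectors $(a(p)v^1)\odot v_2$ with $-t\leq p\leq\wt a-2$, in which the coefficient of the $p$-th one is $\binom{\wt a-s}{p+t}$. As $s$ ranges over the integers with $s>1-t$, the quantity $x=\wt a-s$ ranges over all integers $\leq\wt a+t-2$, while $i=p+t$ ranges over $\{0,1,\dots,\wt a+t-2\}$; the matrix with entries $\binom{x}{i}$ is invertible, since the functions $x\mapsto\binom{x}{i}$ are polynomials of degrees $0,1,\dots,\wt a+t-2$, hence a basis of the space of polynomials of degree $\leq\wt a+t-2$, and evaluation of such polynomials at $\wt a+t-1$ distinct points is an isomorphism. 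Therefore every $(a(p)v^1)\odot v_2$ with $-t\leq p\leq\wt a-2$ lies in $W_2$; letting $t\to\infty$ gives this for all $p\leq\wt a-2$, and combined with the cases $p\geq\wt a-1$ we obtain $M^1\odot M^2=W_2$.

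I expect the computation in (2) to be the only real obstacle: one must single out the correct one-parameter subfamily of the relations \eqref{eq:actionfogeneralspann} — fixed pole order $t$, varying numerator exponent $s$, and then $t\to\infty$ so as to reach the negative modes $a(p)$ with $p<0$ — and then recognize the resulting linear system as being governed by an invertible generalized Vandermonde (binomial-coefficient) matrix. Part (1), by contrast, is an essentially one-line consequence of \eqref{eq: actionspanning2} once single-mode generation of $M^2$ is available.
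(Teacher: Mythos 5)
Your proposal is correct, and part (1) is essentially the paper's argument (the paper spans $M^2$ by iterated modes $a^1(\wt a^1-k_1)\cdots a^r(\wt a^r-k_r)v^2$ with $k_i\geq 2$ and applies \eqref{eq: actionspanning2} $r$ times, whereas you invoke single-mode generation and apply it once; these are interchangeable). Part (2) is where you genuinely diverge. The paper specializes \eqref{eq:actionfogeneralspann} to $s=\wt a$ and $t=k-\wt a$ (i.e.\ the element $a\o (z-1)^{\wt a-k}\in\mathcal{L}_{\P^1\bs\{0,1\}}(V)_{<0}$, $k\geq 2$): since $\binom{\wt a-s}{j}=\binom{0}{j}=\delta_{j,0}$, the left-hand side collapses to the single term $a(\wt a-k)v^1\odot v_2$, and one reads off directly that it equals $-v^1\odot(\cdots)\in W_2$ — no linear algebra needed, and negative values of $t$ are permitted since membership in $\mathcal{L}_{<0}$ only requires $n-m<\wt a-1$. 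You instead fix a large pole order $t$, vary the numerator exponent $s$, and invert a generalized Vandermonde matrix of binomial coefficients; your invertibility argument (the $\binom{x}{i}$ form a degree-graded basis of polynomials, and evaluation at distinct points is an isomorphism) is sound, the truncation of the sums is correctly justified, and the boundary cases $p\geq\wt a-1$ are handled, so the route works — it is just considerably heavier than the one-term specialization available in the same family of relations. Your reduction of the final "irreducible" clause to parts (1) and (2), and your appeal to the single-mode spanning fact from Section~\ref{sec:2.1.5} and \cite{LL04}, are both legitimate.
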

		\begin{proof}
			We first show (1). By assumption,
			$M^2=\spn\{a^1(n_1)\ds a^r(n_r)v^2: a^i\in V, n_i\in \Z, v^2\in \Om(M^2)\}$.
			Note that $a^i(\wt a^i-k_i)v^2$ is in $\Om(M^2)$ if $k_i\leq 1$. It follows that $M^2$ is spanned by the following elements:
			$$
			a^1(\wt a^1-k_1)\ds a^r(\wt a^r-k_r)v^2,\quad r\geq 0, a^i\in V,\ k_i\geq 2,\ v^2\in \Om(M^2).
			$$
			Then, by \eqref{eq: actionspanning2}, we have
			\begin{align*}
				&v_1\odot a^1(\wt a^1-k_1) a^2(\wt a^2-k_2)\ds a^r(\wt a^r-k_r)v^2\\
				&=-\sum_{j_1\geq 0} \binom{\wt a^1-k_1}{j_1} a^1(j_1)v_1\odot a^2(\wt a^2-k_2)\ds a^r(\wt a^r-k_r)v^2\\
				&\vdots\\
				&=(-1)^r \sum_{j_1\geq 0}\ds \sum_{j_r\geq 0}\binom{\wt a^1-k_1}{j_1}\ds \binom{\wt a^r-k_r}{j_r} a^r(j_r)\ds a^1(j_1)v_1\odot v^2,
			\end{align*}
			where $v_1\in M^1$, $a^i\in V$, and $k_i\geq 2$ for all $i$, and $v^2\in \Om(M^2)$. This proves \eqref{eq:M1odotM2spann1}.
			
			To show (2), we first note that $M^1$ is spanned by $a^1(\wt a^1-k_1)\ds a^r(\wt a^r-k_r)v^1$, with $r\geq 0$, $k_i\geq 2$, and $v^1\in \Om(M^1)$. By Definition~\ref{def:L<0}, for any $k\geq 2$, we have $a\o (1/(z-1)^{k-\wt a})\in \mathcal{L}_{\P^1\bs\{0,1\}}(V)_{< 0}$. Then it follows from \eqref{eq:actionfogeneralspann}, with $s=\wt a$ and $t=k-\wt a$, that
			\begin{align*}
				&a^1(\wt a^1-k_1)a^2(\wt a^2-k_2)\ds a^r(\wt a^r-k_r)v^1\odot v_2\\
				&= -a^2(\wt a^2-k_2)\ds a^r(\wt a^r-k_r)v^1\odot \sum_{j_1\geq 0}\binom{\wt a_1-k_1}{j_1} (-1)^{\wt a_1-k_1+j_1} a^1(j_1) v_2\\
				&\vdots\\
				&=(-1)^r v^1\odot \sum_{j_1\geq 0}\ds \sum_{j_r\geq 0} \binom{\wt a^1-k_1}{j_1}\ds \binom{\wt a^r-k_r}{j_r} (-1)^{\sum_{i=1}^r (\wt a_i-k_i+j_i)} a^r(j_r)\ds a^1(j_1)v_2.
			\end{align*}
			This proves \eqref{eq:M1odotM2spann2}.
		\end{proof}
		
		Let $M$ be an admissible $V$-module. We recall the construction of the bimodule $A(M)$ over the Zhu algebra $A(V)$ in \cite{FZ92}. By \cite[Definition 1.5.2]{FZ92}, $V$ has left and right $\ast$-actions on $M$. We can reinterpret these actions in terms of $\rho_1$ \eqref{eq:1action}:
		
	\begin{align}
		&V\times M\ra M, \quad (a,v)\mapsto a\ast v:=\rho_1\left(a\o \frac{z^{\wt a}}{z-1}\right)(v)=\sum_{j\geq 0}\binom{\wt a}{j} a(j-1)v, \label{eq:leftast}\\
		& M\times V\ra M,\quad (v,a)\mapsto v\ast a=\rho_1\left(a\o \frac{z^{\wt a-1}}{z-1}\right)(v)=\sum_{j\geq 0}\binom{\wt a-1}{j} a(j-1)v. \label{eq:rightast}
	\end{align}
	Observe that
	\[
	a\ast v-v\ast a=\sum_{j\geq 0} \binom{\wt a-1}{j} a(j)v=\rho_1(a\o z^{\wt a-1})(v).
	\]
	Using the $\ast$-symbol as above, we can rewrite the relations \eqref{eq: actionspanning1} and \eqref{eq: L(V)0action} in $M^1\odot M^2$ as follows:
	\begin{align}
		&(v_1\ast a)\odot v_2=v_1\odot \sum_{j\geq 0} a(\wt a-1+j)v_2,\label{eq:actionstarform}\\
		& a_{[\wt a-1]}.(v_1\odot v_2)=\left(a\ast v_1-v_1\ast a\right)\odot v_2+ v_1\odot o(a)v_2.\label{eq:L(V)0starform}
	\end{align}
	
	The following formulas are $(1.5.11)-(1.5.15)$ in \cite{FZ92}. They show that $A(M)=M/O(M)$, with
	$O(M)=\spn\{a\circ v=\sum_{j\geq 0}\binom{\wt a}{j} a(j-2)v: a\in V, v\in M\}$, is a bimodule over $A(V)$ \ref{def:AV}:
	\begin{align*}
		&O(V)\ast v\ssq O(M),&&v\ast O(V)\ssq O(M),\\
		& a\ast O(M)\ssq O(M),&& O(M)\ast a\ssq O(M),\numberthis\label{eq:starbimodule}\\
		& (a\ast b)\ast v-a\ast (b\ast v)\in O(M),&& (v\ast a)\ast b-v\ast (a\ast b)\in O(M),\\
		& (a\ast v)\ast b-a\ast (v\ast b)\in O(M).
	\end{align*}
	
	Next, we show that $A(V)$ has a well-defined left module action on the contracted tensor product $M^1\odot M^2$ if either $M^1$ or $M^2$ is generated by its bottom degree as a $V$-module.
	\begin{construction}\label{const:staraction}
		Note that $\iota: V\ra L(V)_0, a\mapsto a_{[\wt a-1]}$ is a well-defined linear map. Composing this map with \eqref{eq:repofLV0onM1odotM2}, we obtain a linear map
		$$
		V\xrightarrow{\iota} L(V)_0\cong \mathcal{L}_{\P^1\bs\{0,1\}}(V)_{\leq 0}/\mathcal{L}_{\P^1\bs\{0,1\}}(V)_{< 0}\xrightarrow{\overline{\rho_{1,0}}} \gl(M^1\odot M^2).
		$$
		By \eqref{eq: L(V)0action} and \eqref{eq:L(V)0starform}, the map $\overline{\rho_{1,0}}\circ \iota$ induces an action of $V$ on $M^1\odot M^2$, denoted by $\ast$:
		\begin{equation}\label{eq:VactiononM1oM2}
			\begin{aligned}
				V\times \left(M^1\odot M^2\right)&\ra M^1\odot M^2, \quad 	(a, v_1\odot v_2)\mapsto 	a\ast (v_1\odot v_2),\\
				a\ast (v_1\odot v_2):&=\overline{\rho_{1,0}}(a\o z^{\wt a-1}+\mathcal{L}_{\P^1\bs\{0,1\}}(V)_{< 0})(v_1\odot v_2)\\
				&=(a\ast v_1-v_1\ast a)\odot v_2+ v_1\odot o(a)v_2,
			\end{aligned}
		\end{equation}
		where $a\ast v_1$ and $v_1\ast a$ are given by \eqref{eq:leftast} and \eqref{eq:rightast}, respectively.
		
	\end{construction}
	
	\begin{remark}
		The expression of the $\ast$-action \eqref{eq:VactiononM1oM2} is not unique, due to the fact that $a\o z^{\wt a-1}+\mathcal{L}_{\P^1\bs\{0,1\}}(V)_{< 0}$ has multiple representatives in $L(V)_0\cong \mathcal{L}_{\P^1\bs\{0,1\}}(V)_{\leq 0}/\mathcal{L}_{\P^1\bs\{0,1\}}(V)_{< 0}$, and $M^1\odot M^2$ is a quotient module. We give an alternative expression of \eqref{eq:VactiononM1oM2} that is useful in the next theorem. Using the addition property \eqref{eq:additionproperty}, we see from table \eqref{eq:listforindeal} that
		$$a\o z^{\wt a-1}\equiv a\o\frac{z^{\wt a-1+q}}{(z-1)^q}\pmod{\mathcal{L}_{\P^1\bs\{0,1\}}(V)_{< 0}},\quad q\in \Z. $$
		In particular, let $q=-\wt a+1$. Then
		\begin{align*}
			a\ast (v_1\odot v_2)&=\overline{\rho_{1,0}}\left(a\o \frac{1}{(z-1)^{-\wt a+1}}+\mathcal{L}_{\P^1\bs\{0,1\}}(V)_{< 0}\right)(v_1\odot v_2)\\
			&=\rho_1\left(a\o (z-1)^{\wt a-1}\right)(v_1)\odot v_2+ v_1\odot \rho_0(a\o (z-1)^{\wt a-1})(v_2) \numberthis\label{eq:altastaction}\\
			&= o(a)v_1\odot v_2+ v_1\odot \Res_{z=0} Y_{M^2}(a,z)v_2(-1+z)^{\wt a-1}.
		\end{align*}
		Here we adopt the convention that $(-1+z)^n$ is the expansion of $z$ in $|z|<1$. 
	\end{remark}

		\begin{thm}\label{thm:AVaction}
	Let $M^1$ and $M^2$ be $V$-modules. If either $M^1$ or $M^2$ is generated by its bottom degree, then the $\ast$-action \eqref{eq:VactiononM1oM2} satisfies $O(V)\ast (M^1\odot M^2)=0$, and $M^1\odot M^2$ becomes a left $A(V)$-module with respect to the following action:
	\begin{equation}\label{eq:AVactiononodot}
    \begin{aligned}
  A(V)&\times \left(M^1\odot M^2\right)\ra M^1\odot M^2,\\
  [a].(v_1\odot v_2)&=(a\ast v_1-v_1\ast a)\odot v_2+ v_1\odot o(a)v_2,      
    \end{aligned}
	\end{equation}
	where $[a]\in A(V)$, $v_1\in M^1$, and $v_2\in M^2$.
\end{thm}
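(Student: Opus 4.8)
The plan is to verify the two assertions in turn: first that $O(V)\ast (M^1\odot M^2)=0$, and then that the induced map $A(V)=V/O(V)\to \gl(M^1\odot M^2)$ is an associative algebra action. By the symmetry of the two hypotheses, I would treat the case ``$M^2$ generated by $M^2(0)$'' in detail and indicate that the case ``$M^1$ generated by $M^1(0)$'' is handled by the analogous argument using the alternative expression~\eqref{eq:altastaction} and the spanning set~\eqref{eq:M1odotM2spann2}.

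For the vanishing $O(V)\ast(M^1\odot M^2)=0$: fix $b\in V$ homogeneous, $a\in V$, so $a\circ b=\sum_{j\geq 0}\binom{\wt a}{j}a(j-2)b\in O(V)$ is a typical generator of $O(V)$, and compute $(a\circ b)\ast(v_1\odot v_2)$ using~\eqref{eq:VactiononM1oM2}. By Lemma~\ref{lm:spannodot}(1) it suffices to evaluate this on elements $v_1\odot v^2$ with $v^2\in M^2(0)$; then $o(c)v^2=[c].v^2$ is the Zhu-algebra action on the bottom degree, so the term $v_1\odot o(a\circ b)v^2=v_1\odot o([a\circ b])v^2$ vanishes because $[a\circ b]=0$ in $A(V)$. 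For the remaining term $((a\circ b)\ast v_1-v_1\ast(a\circ b))\odot v^2$, I would invoke the bimodule identities~\eqref{eq:starbimodule}: since $O(V)\ast v_1\subseteq O(M^1)$ and $v_1\ast O(V)\subseteq O(M^1)$, both $(a\circ b)\ast v_1$ and $v_1\ast(a\circ b)$ lie in $O(M^1)$, and then the relation~\eqref{eq:actionstarform} (which says $(w\ast c)\odot v^2 = w\odot\sum_{j\geq 0}c(\wt c-1+j)v^2$) together with $v^2\in M^2(0)$ forces $(O(M^1))\odot v^2=0$ — indeed $\sum_{j\geq 0}c(\wt c-1+j)v^2 = o(c)v^2$ already lands in $M^2(0)$, and one checks the $O(M^1)$-relation is exactly the one that is killed by~\eqref{eq: actionspanning2}. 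This is the step I expect to be the main obstacle: one must carefully match the definition $O(M^1)=\spn\{a\circ v=\sum_{j\geq 0}\binom{\wt a}{j}a(j-2)v\}$ against the defining relations~\eqref{eq: actionspanning1}--\eqref{eq: actionspanning2} (equivalently~\eqref{eq:actionstarform}) of $M^1\odot M^2$, showing $O(M^1)\odot v^2=0$ whenever $v^2\in M^2(0)$, and similarly that $O(V)\ast v_1$ contributes nothing.

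Granting the vanishing, the $\ast$-action descends to a linear map $A(V)\to\gl(M^1\odot M^2)$ via $[a]\mapsto\bigl((v_1\odot v_2)\mapsto(a\ast v_1-v_1\ast a)\odot v_2+v_1\odot o(a)v_2\bigr)$. It remains to check associativity: $[a].\bigl([b].(v_1\odot v_2)\bigr)=[a\ast b].(v_1\odot v_2)$ and $[\vac].(v_1\odot v_2)=v_1\odot v_2$. The unit statement is immediate since $o(\vac)=\Id$ and $\vac\ast v_1=v_1=v_1\ast\vac$. For associativity I would again reduce to $v^2\in M^2(0)$ via Lemma~\ref{lm:spannodot}(1), expand both sides using~\eqref{eq:VactiononM1oM2}, and collect three types of terms: those of the form $(\cdots)\odot o(b)o(a)v^2$ or $o(a\ast b)v^2$ acting purely on $M^2(0)$, which match because $M^2(0)$ is already a left $A(V)$-module ($o(a)o(b)=o(a\ast b)$ modulo $O(V)$); those acting purely on $M^1$, which match modulo $O(M^1)$ by the associativity identities $(a\ast b)\ast v_1-a\ast(b\ast v_1)\in O(M^1)$, $(v_1\ast a)\ast b-v_1\ast(a\ast b)\in O(M^1)$, and $(a\ast v_1)\ast b-a\ast(v_1\ast b)\in O(M^1)$ from~\eqref{eq:starbimodule}, after which $O(M^1)\odot v^2=0$ as established above; and the cross terms, which I expect to cancel after applying the defining relation~\eqref{eq:actionstarform} to move $\ast$-factors across the $\odot$. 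Since all the needed congruences are already packaged in~\eqref{eq:starbimodule}, this is largely bookkeeping once the reduction to $M^2(0)$ is in place. Finally, I would remark that the whole computation can be repeated with the roles of $M^1$ and $M^2$ interchanged, using the expression~\eqref{eq:altastaction} for $a\ast(v_1\odot v_2)$ and the spanning set~\eqref{eq:M1odotM2spann2}, which completes the proof of Theorem~\ref{thm:AVaction}.
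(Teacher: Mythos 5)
Your treatment of the case where $M^2$ is generated by $M^2(0)$ matches the paper's argument: reduce to $v^2\in M^2(0)$ via Lemma~\ref{lm:spannodot}, kill $v_1\odot o(a\circ b)v^2$ using $o(O(V))M^2(0)=0$, and dispose of the remaining term through the bimodule identities \eqref{eq:starbimodule} once $O(M^1)\odot v^2=0$ is known. You correctly single out $O(M^1)\odot v^2=0$ as the crux, though your justification of it is slightly off target: it does not come from \eqref{eq:actionstarform} (which governs $v_1\ast a$) nor literally from \eqref{eq: actionspanning2}, but from the general relation \eqref{eq:actionfogeneralspann} applied to $a\o \frac{z^{\wt a}}{(z-1)^2}\in \mathcal{L}_{\P^1\bs\{0,1\}}(V)_{<0}$, which gives $(a\circ v_1)\odot v^2=-v_1\odot \rho_0\bigl(a\o \frac{z^{\wt a}}{(z-1)^2}\bigr)v^2=0$ because every operator $a(\wt a+j-1)$ with $j\ge 1$ lowers degree below $0$ on $M^2(0)$ and the $j=0$ term has coefficient zero. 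Your associativity check, after the simplification $[a].(v_1\odot v^2)=(a\ast v_1)\odot v^2$, is then the same one-line argument as in the paper.

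The genuine gap is your claim that the case where $M^1$ is generated by $M^1(0)$ follows by ``the analogous argument.'' It does not: there is no symmetry to exploit, because the bimodule identities \eqref{eq:starbimodule} are statements about $O(M^1)\ssq M^1$ (the module attached to the point $1$), and there is no counterpart $O(M^2)$-machinery on the $M^2$ side. Concretely, when you reduce to $v^1\in M^1(0)$ and $v_2\in M^2$ arbitrary, the term $v^1\odot o(a\circ b)v_2$ no longer vanishes for free, since $o(c)$ for $c\in O(V)$ annihilates only the bottom degree of $M^2$. The paper therefore abandons \eqref{eq:VactiononM1oM2} and \eqref{eq:starbimodule} entirely in this case and instead works with the alternative expression \eqref{eq:altastaction}, proving $(a\circ b)\ast(v^1\odot v_2)=0$ and the associativity $([a]\ast[b]).(v^1\odot v_2)=[a].([b].(v^1\odot v_2))$ by direct residue computations with the Jacobi identity for $Y_{M^2}$, repeatedly using \eqref{eq:actionfogeneralspann} to trade operators acting on $v^1$ (which vanish since $a(\wt a+k)v^1=0$ for $k\geq 0$) for operators acting on $v_2$. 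You do name the right ingredients (\eqref{eq:altastaction} and \eqref{eq:M1odotM2spann2}), but the work in this case is a genuinely different and substantially longer computation, not a formal transposition of the first, and your proposal as written does not supply it.
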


\begin{proof}
	Assume that $M^2$ is generated by $\Om(M^2)$. Then, by Lemma~\ref{lm:spannodot} (1), to show $O(V)\ast (M^1\odot M^2)=0$, it suffices to show that $O(V)\ast (v_1\odot v^2)=0$ for any $v^2\in \Om(M^2)$ and $v_1\in M^1$. We claim that
	\begin{equation}\label{eq:OM1odot}
		O(M^1)\odot v^2=0,\quad v^2\in \Om(M^2).
	\end{equation}
	Indeed, let $a\circ v_1=\sum_{j\geq 0}\binom{\wt a}{j} a(j-2)v_1\in O(M^1)$. Since $a\o \frac{z^{\wt a}}{(z-1)^2}\in  \mathcal{L}_{\P^1\bs\{0,1\}}(V)_{< 0}$ in view of table \eqref{eq:listforindealspanning}, we have
	\begin{align*}
		(a\circ v_1)\odot v^2&=\Res_{z=1}Y_{M^1}(a,z-1)v_1 \iota_{1,z-1}\left(\frac{z^{\wt a}}{(z-1)^2}\right)\odot v^2=\rho_1\left(a\o \frac{z^{\wt a}}{(z-1)^2}\right)(v_1)\odot v^2\\
		&= -v_1\odot \rho_0\left(a\o \frac{z^{\wt a}}{(z-1)^2}\right)v^2=-v_1\odot \left(\sum_{j\geq 0} j a(\wt a+j-1)v^2\right)=0.
	\end{align*}
	This proves \eqref{eq:OM1odot}. Recall that $o(O(V))\Om(M^2)=0$ (see \cite[Theorem 2.1.2]{Z96}). Then, by \eqref{eq:OM1odot} and \eqref{eq:starbimodule}, we have
	\begin{align*}
		O(V) \ast (v_1\o v^2)=(O(V)\ast v_1-v_1\ast O(V))\odot v^2+ v_1\odot o(O(V))v^2\ssq O(M^1)\odot v^2+0=0.
	\end{align*}
	Hence \eqref{eq:AVactiononodot} is well-defined. Now let $v_1\in M^1$ and $v^2\in \Om(M^2)$. Then, by \eqref{eq:actionstarform}, we can simplify the action \eqref{eq:AVactiononodot} as follows:
	\begin{align*}
		[a]. (v_1 \odot v^2)&=(a\ast v_1-v_1\ast a)\odot v^2+v_1\odot o(a)v^2\\
		&=(a\ast v_1)\odot v^2-v_1\odot o(a)v^2+v_1\odot o(a)v^2\numberthis\label{eq:simplifiedaction}\\
		&=(a\ast v_1)\odot v^2.
	\end{align*}
	In particular, for any $a,b\in V$, by \eqref{eq:starbimodule} and \eqref{eq:OM1odot} we have
	\begin{align*}
		&([a]\ast [b]).(v_1\odot v^2)-[a]. \left([b].(v_1\odot v^2)\right)\\
		&=\left((a\ast b)\ast v_1-a\ast (b\ast v_1)\right)\odot v^2 \in O(M^1)\odot v^2=0.
	\end{align*}
	Since $M^1\odot M^2=\spn\{v_1\odot v^2: v_1\in M^1, v^2\in \Om(M^2)\}$, this shows that $M^1\odot M^2$ is a left $A(V)$-module via \eqref{eq:AVactiononodot}.

	Now assume that $M^1$ is generated by $\Om(M^1)$. Instead of using the known properties of $O(V)$ and $O(M)$ in \eqref{eq:starbimodule}, we prove $O(V)\ast (M^1\odot M^2)=0$ directly.

	Let $v^1\odot v_2$ be a spanning element of $M^1\odot M^2$, with $v^1\in \Om(M^1)$ and $v_2\in M^2$ (see Lemma~\ref{lm:spannodot} (2)). Let $a\circ b\in O(V)$, with $a,b\in V$ homogeneous. It follows from \eqref{eq:altastaction}, the Jacobi identity, and relation \eqref{eq:actionfogeneralspann} that
	\begin{align*}
		&(a\circ b)\ast (v^1\odot v_2)\\
		&=o(a\circ b)(v^1)\odot v_2+ v^1\odot \sum_{j\geq 0} \Res_{z_2=0} \binom{\wt a}{j} Y_{M^2}(a(j-2)b,z_2)v_2 (-1+z_2)^{\wt a-j+\wt b}\\
		&=0+v^1\odot \Res_{z_2=0}\Res_{z_1-z_2=0} Y_{M^2}(Y(a,z_1-z_2)b,z_2)v_2 \frac{(-1+z_2)^{\wt b}}{(z_1-z_2)^2} (-1+z_1)^{\wt a}\\
		&= v^1\odot \sum_{i\geq 0} \binom{-2}{i}\Res_{z_1=0}\Res_{z_2=0} Y_{M^2}(a,z_1)Y_{M^2}(b,z_2)v_2 \frac{(-1+z_1)^{\wt a}}{z_1^{2+i}} (-1+z_2)^{\wt b}(-z_2)^i\\
		&\ \ \ -v^1\odot \sum_{i\geq 0} \binom{-2}{i} \Res_{z_2=0}\Res_{z_1=0} Y_{M^2}(b,z_2)Y_{M^1}(a,z_1)v_2 \frac{(-1+z_2)^{\wt b}}{z_2^{2+i}} (-1)^{2+i}(-1+z_1)^{\wt a}z_1^i\\
		&=-\sum_{i\geq 0}\binom{-2}{i} \rho_1\left(a\o \frac{z^{-2-i}}{(z-1)^{-\wt a}}\right)(v^1)\odot \Res_{z_2=0}Y_{M^2}(b,z_2)v_2(-1+z_2)^{\wt b} (-z_2)^i \\
		&\ \ \ + \sum_{i\geq 0} \binom{-2}{i} \rho_1\left(b\o \frac{z^{-2-i}}{(z-1)^{-\wt b}}\right)(v^1)\odot \Res_{z_1=0} Y_{M^2}(a,z_1)v_2 (-1)^{2+i} (-1+z_1)^{\wt a} z_1^i\\
		&=-\sum_{i\geq0}\sum_{k\geq 0} \binom{-2}{i} \binom{-2-i}{k} a(\wt a+k)v^1\odot \Res_{z_2=0}Y_{M^2}(b,z_2)v_2(-1+z_2)^{\wt b} (-z_2)^i \\
		&\ \ \ +\sum_{i\geq 0} \sum_{k\geq 0} \binom{-2}{i} \binom{-2-i}{k} b(\wt b+k)v^1\odot \Res_{z_1=0} Y_{M^2}(a,z_1)v_2 (-1)^{2+i} (-1+z_1)^{\wt a} z_1^i\\
		&=0,
	\end{align*}
	since $a(\wt a+k)v^1, b(\wt b+k)v^1\in M^1(-k-1)=0$ for $k\geq 0$. This shows $O(V)\ast (M^1\odot M^2)=0$, and \eqref{eq:altastaction} induces an action $A(V)\times \left(M^1\odot M^2\right)\ra M^1\odot M^2$, with
	\begin{equation}\label{eq:altaction2}
		[a].(v_1\odot v_2)=o(a)v_1\odot v_2+ v_1\odot \Res_{z=0} Y_{M^2}(a,z)v_2(-1+z)^{\wt a-1}.
	\end{equation}

	Finally, we show that $M^1\odot M^2$ is a left $A(V)$-module with respect to the action \eqref{eq:altaction2}. Let $a,b\in V$ be homogeneous, $v^1\in \Om(M^1)$, and $v_2\in M^2$. Then
	\begin{align*}
		&([a]\ast [b]).(v^1\odot v_2)=[a\ast b].(v^1\odot v_2)\\
		&=o(a\ast b)v^1\odot v_2+v^1\odot \sum_{j\geq 0}\Res_{z_2=0} \binom{\wt a}{j} Y_{M^2}(a(j-1)b,z_2)_2 (-1+z_2)^{\wt a-j-1+\wt b}\\
		&= o(a)o(b)v^1\odot v_2+v^1\odot \Res_{z_2=0}\Res_{z_1-z_2=0} Y_{M^2}(Y(a,z_1-z_2)b,z_2)v_2 \frac{(-1+z_1)^{\wt a}}{z_1-z_2}(-1+z_2)^{\wt b-1}\\
		&=\underbrace{ o(a)o(b)v^1\odot v_2}_{(A)}\\
		&\ \ \ + \underbrace{v^1\odot \sum_{j\geq 0} \Res_{z_1=0}\Res_{z_2=0} Y_{M^2}(a,z_1)Y_{M^2}(b,z_2)v_2 \frac{(-1+z_1)^{\wt a}}{z_1^{1+j}} (-1+z_2)^{\wt b-1} z_2^j}_{(B)} \\
		&\ \ \ +\underbrace{v^1\odot \sum_{j\geq0} \Res_{z_2=0}\Res_{z_1=0} Y_{M^2}(b,z_2)Y_{M^2}(a,z_1)v_2 \frac{(-1+z_2)^{\wt b-1}}{z_2^{1+j}} (-1+z_1)^{\wt a} z_1^j}_{(C)}\\
		&=(A)+(B)+(C).
	\end{align*}
	Write $\frac{(-1+z_1)^{\wt a}}{z_1}=-\frac{(-1+z_1)^{\wt a-1}}{z_1}+(-1+z_1)^{\wt a-1}$. Then, by \eqref{eq:actionfogeneralspann}, we can express $(B)$ as follows:
	\begin{align*}
		(B)&=\rho_1\left(a\o \frac{z^{-1}}{(z-1)^{-\wt a+1}}\right)(v^1)\odot \Res_{z_2=0} Y_{M^2}(b,z_2)v_2 (-1+z_2)^{\wt b-1} \\
		&\ \ \ +v^1\odot \Res_{z_1=0}\Res_{z_2=0} Y_{M^2}(a,z_1)Y_{M^2}(b,z_2)v_2 (-1+z_1)^{\wt a-1} (-1+z_2)^{\wt b-1} \\
		&\ \ \ -\sum_{j\geq 1} \rho_1\left(a\o \frac{z^{-1-j}}{(z-1)^{-\wt a}}\right)(v^1)\odot \Res_{z_2=0} Y_{M^2}(b,z_2)v_2 (-1+z_2)^{\wt b-1}\\
		&= \sum_{k\geq 0} \binom{-1}{k} a(\wt a-1+k)v^1\odot \Res_{z_2=0} Y_{M^2}(b,z_2)v_2 (-1+z_2)^{\wt b-1}\\
		&\ \ \ +v^1\odot \Res_{z_1=0}\Res_{z_2=0} Y_{M^2}(a,z_1)Y_{M^2}(b,z_2)v_2 (-1+z_1)^{\wt a-1} (-1+z_2)^{\wt b-1} \\
		&\ \ \ -\sum_{j\geq 1}\sum_{k\geq 0} \binom{-1-j}{k} a(\wt a+k)v^1\odot \Res_{z_2=0} Y_{M^2}(b,z_2)v_2 (-1+z_2)^{\wt b-1}\\
		&=o(a)v^1\odot  \Res_{z_2=0} Y_{M^2}(b,z_2)v_2 (-1+z_2)^{\wt b-1}\\
		&\ \ \ +v^1\odot \Res_{z_1=0}\Res_{z_2=0} Y_{M^2}(a,z_1)Y_{M^2}(b,z_2)v_2 (-1+z_1)^{\wt a-1} (-1+z_2)^{\wt b-1},
	\end{align*}
	noting that $a(\wt a+k)v^1=0$ for $k\geq 0$.
	Using \eqref{eq:actionfogeneralspann} again, we can express $(C)$ as follows:
	\begin{align*}
		(C)&=-\sum_{j\geq 0}\rho_1\left(b\o \frac{z^{-1-j}}{(z-1)^{-\wt b+1}}\right)(v^1)\odot \Res_{z_1=0}Y_{M^2}(a,z_1)v_2(-1+z_1)^{\wt a} z_1^j\\
		&=-\sum_{j\geq 0}\sum_{k\geq 0}\binom{-1-j}{k} b(\wt b-1+k)v^1\odot \Res_{z_1=0}Y_{M^2}(a,z_1)v_2(-1+z_1)^{\wt a} z_1^j\\
		&=o(b)v^1\odot \Res_{z_1=0}Y_{M^2}(a,z_1)v_2(-1+z_1)^{\wt a}\cdot  \frac{1}{-1+z_1}\\
		&=o(b)v^1\odot \Res_{z_1=0}Y_{M^2}(a,z_1)v_2(-1+z_1)^{\wt a-1}.
	\end{align*}
	Now it follows from \eqref{eq:altaction2} that
	\begin{align*}
		&([a]\ast [b]).(v^1\odot v_2)=(A)+(B)+(C)\\
		&=o(a)o(b)v^1\odot v_2+ o(a)v^1\odot  \Res_{z_2=0} Y_{M^2}(b,z_2)v_2 (-1+z_2)^{\wt b-1}\\
		&\ \ \ + v^1\odot \Res_{z_1=0}\Res_{z_2=0} Y_{M^2}(a,z_1)Y_{M^2}(b,z_2)v_2 (-1+z_1)^{\wt a-1} (-1+z_2)^{\wt b-1}\\
		&\ \ \ +o(b)v^1\odot \Res_{z_1=0}Y_{M^2}(a,z_1)v_2(-1+z_1)^{\wt a-1}\\
		&= [a].\left([b].(v^1\odot v_2)\right).
	\end{align*}
	This shows that $M^1\odot M^2$ is a left $A(V)$-module with respect to the action \eqref{eq:altaction2}.
\end{proof}

\begin{remark}
	Since the $A(V)$-action \eqref{eq:AVactiononodot} (or \eqref{eq:altaction2}) on $M^1\odot M^2$ is induced by the $L(V)_0$-action \eqref{eq: L(V)0action}, Theorem~\ref{thm:AVaction} also shows that the Lie algebra homomorphism $\overline{\rho_{1,0}}: L(V)_0\ra \gl(M^1\odot M^2)$ factors through the Lie algebra epimorphism $L(V)_0\ra A(V)_{\mathrm{Lie}},\ a_{[\wt a-1]}\mapsto [a]$:
	$$
	\begin{tikzcd}
		L(V)_0\arrow[r,"\rho"]\arrow[d,two heads]& \gl(M^1\odot M^2),\\
		A(V)_{\mathrm{Lie}}\arrow[ur,dashed,"\varphi"']
	\end{tikzcd}
	$$
	where $\rho$ is given by \eqref{eq: L(V)0action} and $\varphi$ is given by \eqref{eq:AVactiononodot}. In particular, under the assumptions of Theorem~\ref{thm:AVaction}, we have
	\begin{equation}\label{eq:homspaceequality}
		\Hom_{L(V)_0}(M^1\odot M^2, \Om(M^3))=\Hom_{A(V)}(M^1\odot M^2, \Om(M^3)),
	\end{equation}
	where $\Om(M^3)$ is the bottom degree of an admissible $V$-module $M^3$.
\end{remark}

\begin{coro}\label{corc:comparison}
	Assume that the $V$-module $M^2$ is generated by $\Om(M^2)$. The following map is an epimorphism of left $A(V)$-modules:
	\begin{equation}\label{eq:defpi}
		\pi: A(M^1)\o_{A(V)} \Om(M^2)\twoheadrightarrow M^1\odot M^2, \quad [v_1]\o v^2\mapsto v_1\odot v^2.
	\end{equation}
	In particular, we have the estimate
	\begin{equation}\label{eq:dimestimate}
		\dim \Hom_{L(V)_0}(M^1\odot M^2, \Om(M^3))\leq \dim \Hom_{A(V)}(A(M^1)\o_{A(V)} \Om(M^2), \Om(M^3)).
	\end{equation}
\end{coro}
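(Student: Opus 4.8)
The plan is to construct $\pi$ from the evident bilinear assignment $A(M^1)\times M^2(0)\ra M^1\odot M^2$, $([v_1],v^2)\mapsto v_1\odot v^2$, check that it is $A(V)$-balanced so that it descends to the tensor product, verify $A(V)$-linearity and surjectivity, and finally extract the dimension bound by applying the left-exact contravariant functor $\Hom_{A(V)}(-,M^3(0))$. Most of the computational content has already been isolated in Theorem~\ref{thm:AVaction} and its proof, so the work here is largely bookkeeping.

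First I would confirm that $(v_1,v^2)\mapsto v_1\odot v^2$ factors through $A(M^1)$ in the first slot: this is exactly the vanishing $O(M^1)\odot v^2=0$ for $v^2\in M^2(0)$ recorded as \eqref{eq:OM1odot} in the proof of Theorem~\ref{thm:AVaction}. Next comes $A(V)$-balancedness: writing the right action as $[v_1]\ast[a]=[v_1\ast a]$ and the left action on $M^2(0)$ as $[a].v^2=o(a)v^2$, one needs $(v_1\ast a)\odot v^2=v_1\odot o(a)v^2$. By relation \eqref{eq:actionstarform} we have $(v_1\ast a)\odot v^2=v_1\odot\sum_{j\ge0}a(\wt a-1+j)v^2$, and since $\wt\bigl(a(\wt a-1+j)\bigr)=-j$ the summand $a(\wt a-1+j)v^2$ lands in $M^2(-j)$, hence vanishes for $j\ge1$; only $j=0$ survives, giving $v_1\odot o(a)v^2$. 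Therefore $\pi$ is well defined.

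Then $\pi$ is $A(V)$-linear: for the left action on $A(M^1)\o_{A(V)}M^2(0)$ we have $[a].([v_1]\o v^2)=[a\ast v_1]\o v^2$, so $\pi([a].([v_1]\o v^2))=(a\ast v_1)\odot v^2$, which equals $[a].(v_1\odot v^2)=[a].\pi([v_1]\o v^2)$ by the simplified action \eqref{eq:simplifiedaction}. Surjectivity is immediate from Lemma~\ref{lm:spannodot}(1): under the hypothesis that $M^2$ is generated by $M^2(0)$, the space $M^1\odot M^2$ is spanned by the elements $v_1\odot v^2$ with $v_1\in M^1$, $v^2\in M^2(0)$, each of which is $\pi([v_1]\o v^2)$. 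This gives that $\pi$ is an epimorphism of left $A(V)$-modules.

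Finally, applying the left-exact contravariant functor $\Hom_{A(V)}(-,M^3(0))$ to $\pi$ yields an injection $\Hom_{A(V)}(M^1\odot M^2,M^3(0))\hookrightarrow\Hom_{A(V)}(A(M^1)\o_{A(V)}M^2(0),M^3(0))$, and hence the inequality of dimensions. To recognize the left-hand side of \eqref{eq:dimestimate} I would invoke \eqref{eq:homspaceequality} from the remark following Theorem~\ref{thm:AVaction} (applicable since $M^2$ is generated by its bottom degree), which identifies $\Hom_{L(V)_0}(M^1\odot M^2,M^3(0))$ with $\Hom_{A(V)}(M^1\odot M^2,M^3(0))$. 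I do not expect a genuine obstacle here: the only point needing care is keeping the two $\ast$-actions straight — the right $A(V)$-action on $A(M^1)$ versus the induced left $A(V)$-action on $M^1\odot M^2$ — when verifying balancedness, and remembering that $\pi$ is generally not injective, as the Virasoro example in Section~\ref{Sec:5.4.2} shows.
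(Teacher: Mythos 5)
Your proof is correct and follows essentially the same route as the paper: well-definedness via \eqref{eq:OM1odot} and the relation $(v_1\ast a)\odot v^2=v_1\odot o(a)v^2$ (the paper cites \eqref{eq:simplifiedaction}, which encodes the same degree argument you make explicit), $A(V)$-linearity and surjectivity via Lemma~\ref{lm:spannodot}, and the dimension bound from \eqref{eq:homspaceequality} together with left-exactness of $\Hom_{A(V)}(-,M^3(0))$.
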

\begin{proof}
	To show that $\pi$ is well-defined, we need to show that $\pi([O(M^1)]\otimes v^2)=0$ and that $\pi([v_1]\ast [a]\o v^2)=\pi(v_1\o o(a)v^2)$. Indeed, it follows from \eqref{eq:OM1odot} that
	\[
		\pi([O(M^1)]\otimes v^2)=O(M^1)\odot v^2=0.
	\]
	Moreover, by \eqref{eq:simplifiedaction}, we have
	\begin{align*}
		&\pi([v_1]\ast [a]\o v^2)=\pi([v_1\ast a]\o v^2)=v_1\ast a \odot v^2=v_1\odot o(a)v^2=\pi (v_1\o o(a)v^2),\\
		& \pi( [a].([v_1]\o v^2))=\pi([a\ast v_1]\o v^2)= (a\ast v_1)\odot v^2=[a].(v_1\odot v^2)=[a].\pi([v_1]\o v^2).
	\end{align*}
	Hence $\pi$ in \eqref{eq:defpi} is a well-defined $A(V)$-module homomorphism. By Lemma~\ref{lm:spannodot}, it is surjective. Finally, the estimate \eqref{eq:dimestimate} follows from \eqref{eq:homspaceequality}.
\end{proof}

\begin{remark}
	In Section \ref{Sec:6}, we will show that the epimorphism $\pi$ in \eqref{eq:defpi} is not necessarily injective, and so the estimate \eqref{eq:dimestimate} is sharp for certain examples of VOAs. On the other hand, if the VOA $V$ is rational and $C_2$-cofinite, then $\pi$ is in fact an isomorphism of left $A(V)$-modules.
\end{remark}


		\section{Extension of the $\infty$-restricted conformal blocks}\label{sec:5}
		In this Section, we show that an $\infty$-restricted conformal block $\varphi$ in $\mathscr{C}_{\mathrm{res}}\left(\Sigma(\Om((M^3)'), M^1, M^2)\right)$ can be extended to a regular three-pointed conformal block $\widetilde{\varphi}$ in $\mathscr{C}\left(\Sigma((M^3)', M^1, M^2)\right)$, where $(M^3)'\cong \bar{M}(\Om((M^3)'))$. This will lead to our Hom-space description $\Hom_{L(V)_0}(M^1\odot M^2, \Om(M^3))$ of the space of intertwining operators $I\fusion{M^1}{M^2}{M^3}$.

\subsection{Construction of the extended conformal blocks}

The following lemma shows that any regular conformal block can be restricted to an $\infty$-restricted conformal block. It also gives an estimate of the fusion rule, generalizing the estimate in \cite[Proposition 2.10]{Li99}.

\begin{lm}\label{lm:estimate}
	Assume the $V$-module $(M^3)'$ is generated by its bottom degree $\Om((M^3)')$, and $\Om((M^3)')^\ast\cong\Om(M^3)$. Then the restriction map
	\begin{equation}\label{eq:cfbrestriction}
		\begin{aligned}
			G:\mathscr{C}\left(\Sigma( (M^3)', M^1, M^2)\right)\ra  \mathscr{C}_{\mathrm{res}}\left(\Sigma(\Om((M^3)'), M^1, M^2)\right), \quad \psi\mapsto \psi|_{\Om((M^3)')\o M^1\o M^2},
		\end{aligned}
	\end{equation}
	is injective. In particular, $N\fusion{M^1}{M^2}{M^3}\leq \dim \Hom_{L(V)_0}(M^1\odot M^2,\Om(M^3))$.
\end{lm}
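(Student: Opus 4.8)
The plan is to prove that $G$ is injective by transporting the vanishing of a conformal block from the bottom degree $M^3(0)^\ast$ up to all of $(M^3)'$, using invariance under a well-chosen one-parameter family of chiral Lie algebra elements; the dimension estimate then follows formally from Propositions~\ref{prop:IOconformalblocks} and \ref{prop:inftycfbHom}. First I would observe that $G$ is well-defined: by Lemma~\ref{lm:inftychiralLiestab} the Lie subalgebra $\mathcal{L}_{\P^1\bs\{0,1\}}(V)_{\leq 0}$ of $\mathcal{L}_{\P^1\bs\{\infty,1,0\}}(V)$ stabilizes $M^3(0)^\ast\o_\C M^1\o_\C M^2$, so the restriction of any block killing $\mathcal{L}_{\P^1\bs\{\infty,1,0\}}(V).\!\left((M^3)'\o_\C M^1\o_\C M^2\right)$ is automatically invariant under $\mathcal{L}_{\P^1\bs\{0,1\}}(V)_{\leq 0}$, hence lies in $\mathscr{C}\!\left(\Sigma_{1}(M^3(0)^\ast,M^1,M^2)\right)$.

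For injectivity, suppose $\psi$ lies in the kernel of $G$, so $\psi$ vanishes on $M^3(0)^\ast\o_\C M^1\o_\C M^2$. Since $(M^3)'$ is generated by its bottom degree, it is spanned by the monomials $a'_1(n_1)\cs a'_r(n_r)v'$ with $r\geq 0$, $a_i\in V$, $n_i\in\Z$, $v'\in M^3(0)^\ast$ (cf.~\eqref{eq:spanverma}), where $a'(k)$ denotes the mode operator on $(M^3)'$ appearing in \eqref{eq:inftyaction}. The key identity comes from specializing the chiral Lie algebra element in \eqref{eq:actionofchiralliealgebraelements} to $a\o z^{n}$, i.e.\ taking $m=0$: then $\binom{0}{j}=\delta_{j,0}$, the $\infty$-component collapses to the single term $-a'(n)v'_3\o v_1\o v_2$, and invariance of $\psi$ yields, for all $v'_3\in(M^3)'$, $v_1\in M^1$, $v_2\in M^2$,
\begin{equation*}
\psi\big(a'(n)v'_3\o v_1\o v_2\big)=\sum_{j\geq 0}\binom{n}{j}\,\psi\big(v'_3\o a(j)v_1\o v_2\big)+\psi\big(v'_3\o v_1\o a(n)v_2\big),
\end{equation*}
a finite sum by the truncation property for $Y_{M^1}$. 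I would then induct on the number $r$ of modes in a monomial: for $r=0$ the vector lies in $M^3(0)^\ast$, so $\psi$ vanishes on it by hypothesis; for $r\geq 1$ write the monomial as $a'_1(n_1)w'$ with $w'$ a product of $r-1$ modes on a bottom vector, apply the displayed identity with $v'_3$ replaced by $w'$ and $a=a_1$, $n=n_1$, and observe that every term on the right has $w'$ in the $(M^3)'$-slot, hence vanishes by the inductive hypothesis. Thus $\psi$ kills every monomial and so $\psi=0$ by linearity, proving $G$ injective.

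Finally, injectivity of $G$ gives $\dim\mathscr{C}\!\left(\Sigma_{1}((M^3)',M^1,M^2)\right)\leq\dim\mathscr{C}\!\left(\Sigma_{1}(M^3(0)^\ast,M^1,M^2)\right)$; by Proposition~\ref{prop:IOconformalblocks} the left-hand dimension is $N\fusion{M^1}{M^2}{M^3}$ and by Proposition~\ref{prop:inftycfbHom} the right-hand one is $\dim\Hom_{L(V)_0}(M^1\odot M^2,M^3(0))$, which is the asserted bound. There is no deep obstacle here; the argument is essentially bookkeeping, and its only real content is the specialization $m=0$, which turns the $\infty$-part of the chiral Lie algebra action from the infinite series $\sum_j\binom{-m}{j}(-1)^j a'(n-m-j)v'_3$ into the single mode $a'(n)v'_3$, so that invariance peels off exactly one creation operator from the $(M^3)'$-slot at a time. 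The only points needing a little care are the well-foundedness of the induction — each step strictly reduces the number of modes in the $(M^3)'$-slot while leaving the $M^1$- and $M^2$-slots arbitrary — and the finiteness of the sum over $j$, which is precisely the truncation property for module vertex operators.
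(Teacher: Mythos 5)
Your proof is correct and takes essentially the same route as the paper: both exploit invariance under the elements $a\o z^{n}$ (the $m=0$ specialization of \eqref{eq:actionofchiralliealgebraelements}), for which the $\infty$-component collapses to the single mode $-a'(n)$, so that one creation operator is peeled off the $(M^3)'$-slot at a time. The only cosmetic difference is that you induct on the length of a monomial $a'_1(n_1)\cs a'_r(n_r)v'$, whereas the paper invokes the single-mode spanning fact $(M^3)'=\spn\{b'(n)v'_3:b\in V,\,n\in\Z,\,v'_3\in M^3(0)^\ast\}$ from Section~\ref{sec:2.1.5} and therefore needs only one application of the identity.
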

\begin{proof}
	By the remark in Section~\ref{sec:2.1.5}, $(M^3)'$ is spanned by $$\{b'(n)v'_3: b\in V, n\in \Z, v'_3\in \Om((M^3)')\},$$
    where
$
		b'(n)=\sum_{j\geq 0}\frac{(-1)^{\wt b}}{j!} (L(1)^jb)(2\wt b-n-j-2).
$
	Now assume $\psi\in \mathscr{C}\left(\Sigma( (M^3)', M^1, M^2)\right)$ satisfies $\psi|_{\Om((M^3)')\o M^1\o M^2}=0$. Then by \eqref{eq:actionofchiralliealgebraelements},
	\begin{align*}
		&\psi(b'(n)v'_3\o v_1\o v_2)\\
		&=-\psi\left((b\o z^n).(v'_3\o v_1\o v_2)\right)
		+\psi\!\left(v'_3\o \Res_{z=1}Y_{M^1}(b,z-1)v_1\o v_2\right)\iota_{1,z-1}(z^n)\\
		&\ \ \ + \psi\!\left(v'_3\o v_1\o \Res_{z=0} Y_{M^2}(b,z)v_2\right) z^n\\
		&=0,
	\end{align*}
	where the last equality follows from the facts that $\psi$ is invariant under the action of $b\o z^n$, and $\psi|_{\Om((M^3)')\o M^1\o M^2}=0$.
	Hence $\psi=0$ on $(M^3)'\o M^1\o M^2$, i.e., $G$ is injective. The estimate of the fusion rule follows from Proposition~\ref{prop:IOconformalblocks} and Proposition~\ref{prop:inftycfbHom}.
\end{proof}

Let $M^3$ be a $V$-module with bottom degree $\Om(M^3)$, and let $\bar{M}(\Om((M^3)'))$ be the generalized Verma module associated to the left $A(V)$-module $\Om((M^3)')$ via the contragredient action \eqref{eq:contraleftA(V)}.
Note that
\[
	\bar{M}(\Om((M^3)'))=\spn\{b'(n)v'_3: b\in V, n\in \Z, v'_3\in \Om((M^3)')\}.
\]

\begin{construction}
	Given an $\infty$-restricted conformal block $\varphi\in \mathscr{C}_{\mathrm{res}}\left(\Sigma(\Om((M^3)'), M^1, M^2)\right)$, define a linear map
$
		\widetilde{\varphi}: \bar{M}(\Om((M^3)'))\otimes_\C M^1\otimes_\C M^2\ra \C
$
	by setting
	\begin{equation}\label{eq:defextenedphi}
		\begin{aligned}
			\braket*{\widetilde{\varphi}}{b'(n)v'_3\o v_1\o v_2}
			&:=\braket*{\varphi}{v'_3\otimes \Res_{z=1}Y_{M^1}(b,z-1)v_1 \otimes v_2}\iota_{1,z-1}(z^n)\\
			&\ \ \ +\braket*{\varphi}{v'_3\otimes v_1\otimes \Res_{z=0} Y_{M^2}(b,z)v_2} z^n,
		\end{aligned}
	\end{equation}
	where $b\in V$, $n\in \Z$, and $v'_3\in \Om((M^3)')$.

	For a general spanning element $b'_1(n_1)b'_2(n_2)\ds b'_r(n_r)v'_3$ of $\bar{M}(\Om((M^3)'))$, where $b_i\in V$ and $n_i\in \Z$, with $-\wt b_i+n_i+1\geq 0$ for all $i$, we define the evaluation of $\wphi$ on $b'_1(n_1)b'_2(n_2)\ds b'_r(n_r)v'_3\o v_1\o v_2$ inductively by the following formula:
	\begin{equation}\label{eq:defextensionphi2}
		\begin{aligned}
			&\braket*{\wphi}{b'_1(n_1)b'_2(n_2)\ds b'_r(n_r)v'_3\o v_1\o v_2}\\
			&=\braket*{\wphi}{b'_2(n_2)\ds b'_r(n_r)v'_3\o \Res_{z=1}Y_{M^1}(b_1,z-1)v_1\o v_2}\iota_{1,z-1}(z^{n_1})\\
			&\ \ \ +\braket*{\wphi}{b'_2(n_2)\ds b'_r(n_r)v'_3\o v_1\o \Res_{z=0}Y_{M^2}(b_1,z)v_2}z^{n_1}.
		\end{aligned}
	\end{equation}
\end{construction}

\begin{remark}
	Observe that if $\varphi$ is an element of $\mathscr{C}\left(\Sigma(\bar{M}(\Om((M^3)')), M^1, M^2)\right)$, then it must satisfy the following equality by \eqref{eq:actionofchiralliealgebraelements}:
	\begin{equation}\label{eq:defextensionedphi0}
		\begin{aligned}
			&\braket*{\varphi}{\Res_{z=\infty} Y_{\bar{M}(\Om((M^3)'))}(\iota(b),z^{-1})v'_3\o v_1\o v_2}\iota_{z,1}\left(\frac{z^n}{(z-1)^m}\right)\\
			&=\braket*{\varphi}{v'_3\otimes \Res_{z=1}Y_{M^1}(b,z-1)v_1 \otimes v_2}\iota_{1,z-1}\left(\frac{z^n}{(z-1)^m}\right)\\
			&\ \ \ +\braket*{\varphi}{v'_3\otimes v_1\otimes \Res_{z=0} Y_{M^2}(b,z)v_2}\iota_{1,z}\left(\frac{z^n}{(z-1)^m}\right),
		\end{aligned}
	\end{equation}
	where $b\in V$, $v'_3\in\bar{M}(\Om((M^3)'))$, and $m,n\in \Z$. We construct $\widetilde{\varphi}$ in \eqref{eq:defextenedphi} and \eqref{eq:defextensionphi2} so that this equality holds.

	The only relations among the spanning elements $b'_1(n_1)b'_2(n_2)\ds b'_r(n_r)v'_3$ of the generalized Verma module $\bar{M}(\Om((M^3)'))$ are given by the Jacobi identity \cite{DLM98}. Note that \eqref{eq:defextensionedphi0} is essentially a variation of the Jacobi identity, which, in turn, also defines $\wphi$ in \eqref{eq:defextenedphi} and \eqref{eq:defextensionphi2}. Hence $\wphi$ is a well-defined element of $\left(\bar{M}(\Om((M^3)'))\otimes_\C M^1\otimes_\C M^2\right)^\ast$.
\end{remark}

		

	We want to show that $\widetilde{\varphi}$ is an element of the space of conformal blocks associated to the datum $\Sigma(\bar{M}(\Om((M^3)')), M^1, M^2)$. In other words, it is also invariant under the action of the chiral Lie algebra $\mathcal{L}_{\P^1\bs\{\infty,1,0\}}(V)$; see Definition~\ref{def:cfb}. For this purpose, we need the following lemma about the spanning elements of $\mathcal{L}_{\P^1\bs\{\infty,1,0\}}(V)$. The proof is similar to that of Proposition~\ref{prop:spanningelt}, and we omit the details.


\begin{lm}\label{lm:spanningchiralLie}
	The chiral Lie algebra $\mathcal{L}_{\P^1\bs\{\infty,1,0\}}(V)$ is spanned by the following elements:
	\begin{equation}\label{}
		a\o \frac{z^{\wt a-1}}{z-1},\quad a\o z^{\wt a-1},\quad a\o z^{\wt a-k},\quad a\o z^{\wt a+l},
	\end{equation}
	where $a\in V$ is homogeneous, $k\geq 2$, and $l\geq 0$.
\end{lm}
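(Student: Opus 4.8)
The plan is to mimic the proof of Proposition~\ref{prop:spanningelt} essentially verbatim, adding the extra columns that were excluded in the $\infty$-restricted case. Recall that $\mathcal{L}_{\P^1\bs\{\infty,1,0\}}(V)$ is spanned over $\C$ by all $a\o \frac{z^n}{(z-1)^m}$ with $a\in V$ homogeneous and $m,n\in \Z$, with no constraint linking $n-m$ to $\wt a$. Exactly as in \eqref{eq:listfornm}, I would lay out the pairs $(n,m)$ in a doubly-infinite grid whose columns are indexed by $i = (n-m) - (\wt a - 1)\in \Z$: the columns with $i\leq 0$ are precisely those already handled inside $\mathcal{L}_{\P^1\bs\{0,1\}}(V)_{\leq 0}$, and the new columns are those with $i\geq 1$. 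The claimed spanning set decomposes accordingly: the elements $a\o\frac{z^{\wt a-1}}{z-1}$ (the $i=0$, $m=1$ slot) and $a\o z^{\wt a-k}$ for $k\geq 2$ (the column $i = 1-k < 0$, $m=0$ slot) together with $a\o z^{\wt a-1}$ (the $i=0$, $m=0$ slot) span the part lying in columns $i\leq 0$ by Proposition~\ref{prop:spanningelt}, while the elements $a\o z^{\wt a+l}$ for $l\geq 0$ occupy the $m=0$ slot in each new column $i = l+1\geq 1$.

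The key steps are: (1) Recall from Proposition~\ref{prop:spanningelt} that all pairs $(n,m)$ with $n-m\leq \wt a-1$ lie in the span of $\{a\o\frac{z^{\wt a}}{z-1},\ a\o z^{\wt a-k}:k\geq 1\}$; observe that $a\o\frac{z^{\wt a}}{z-1}\equiv a\o z^{\wt a-1} \pmod{\mathcal{L}_{\P^1\bs\{0,1\}}(V)_{<0}}$ via the addition relation \eqref{eq:additionproperty} applied to the triple $(\wt a-1,0),(\wt a,1),(\wt a-1,1)$, so $\{a\o\frac{z^{\wt a-1}}{z-1},\ a\o z^{\wt a-k}:k\geq 1\}$ works equally well for these columns --- and $a\o z^{\wt a-1}$ is the $k=1$ case. (2) For the new columns $i\geq 1$, first use the $L(-1)$-derivative relation from \eqref{eq:defofchiralLie}, namely
\begin{equation*}
0\equiv (L(-1)a)\o \frac{z^{\wt a+l}}{z-1} = -a\o \frac{(\wt a+l)z^{\wt a+l-1}}{z-1}+a\o\frac{z^{\wt a+l-1}}{(z-1)^2},
\end{equation*}
exactly as in the proof of Claim~\eqref{eq:claim}, to climb down each new column from the $m=1$ slot: this shows that if $a\o\frac{z^{\wt a+l-1}}{z-1}$ lies in the span then so does every $a\o\frac{z^{\wt a+l-1}}{(z-1)^m}$, $m\geq 1$. (3) Feed the generators $a\o z^{\wt a+l}$ ($m=0$ slot of column $i=l+1$) into the addition relation \eqref{eq:additionproperty} to reach the $m=1$ slots of those columns, and then combine with step (2) and with \eqref{eq:additionproperty} again to sweep upward to the $m<0$ slots, precisely as the final paragraph of the proof of Proposition~\ref{prop:spanningelt} does for the $i\leq 0$ columns. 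Since steps (1)--(3) cover every column $i\in\Z$, every pair $(n,m)$ is reached, and the stated set spans.

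I do not expect any genuine obstacle here: the proof is a bookkeeping extension of an argument already carried out in the paper, and the author explicitly says ``The proof is similar to Proposition~\ref{prop:spanningelt}, we omit the detail.'' The only point requiring a moment's care is making sure the three families of generators --- $a\o\frac{z^{\wt a-1}}{z-1}$, the $a\o z^{\wt a-k}$ with $k\geq 2$ (together with $a\o z^{\wt a-1}$), and the $a\o z^{\wt a+l}$ with $l\geq 0$ --- between them place a generator in the $m=0$ slot of \emph{every} column $i\in\Z$: the columns $i\leq 0$ get $a\o z^{\wt a+i}$ with the exponent $\wt a+i = \wt a - k$ for $k=-i\geq 0$, and the columns $i\geq 1$ get $a\o z^{\wt a+l}$ with $l = i-1\geq 0$; the slot $i=0,m=1$ is supplied by $a\o\frac{z^{\wt a-1}}{z-1}$, which Step~(1) shows is as good as $a\o\frac{z^{\wt a}}{z-1}$. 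Given that, the rest is the same column-by-column induction using only the relations \eqref{eq:defofchiralLie} and \eqref{eq:additionproperty} that are already established, so I would simply cite Proposition~\ref{prop:spanningelt} for the $i\leq 0$ part and run the identical induction on the finitely-presented-per-row relations for the $i\geq 1$ part.
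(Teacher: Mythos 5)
Your proposal is correct and follows exactly the route the paper intends: the author omits the proof of this lemma, referring to the column-by-column induction of Proposition~\ref{prop:spanningelt}, and your steps (1)--(3) carry out precisely that extension to the unrestricted table (step (1) importing the restricted case, the $L(-1)$-relation descending each new column from its $m=1$ slot, and \eqref{eq:additionproperty} supplying the $m=1$ slots and the upward sweep to $m<0$). Two cosmetic slips do not affect the argument: your stated column index $i=(n-m)-(\wt a-1)$ should be $i=n-(\wt a-1)$ to match your subsequent slot identifications, and in the displayed $L(-1)$-relation the last term should read $a\o\frac{z^{\wt a+l}}{(z-1)^2}$, which reduces to the term you wrote after one further application of \eqref{eq:additionproperty}.
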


The following table for the pairs $(n,m)$ illustrates the spanning elements of $\mathcal{L}_{\P^1\bs\{\infty,1,0\}}(V)$:
\begin{equation}\label{}
	\begin{matrix}
		\ds &\vdots &\vdots&\vdots&\vdots&\vdots&\textcolor{red}{\ds}\\
		\ds & (\wt a-3, -4) & (\wt a-2,-3)& (\wt a-1,-2) &(\wt a,-1) & \textcolor{red}{(\wt a+1,0)} &\ds \\
		\ds &(\wt a-3,-3)& (\wt a-2,-2)&(\wt a-1,-1)&\textcolor{red}{(\wt a,0)}& (\wt a+1,1)& \ds \\
		\ds & (\wt a-3,-2) &(\wt a-2,-1)&\textcolor{red}{(\wt a-1,0)}& (\wt a,1)&(\wt a+1,2)& \ds \\
		\ds &(\wt a-3,-1)&\textcolor{red}{(\wt a-2,0)}& \textcolor{red}{(\wt a-1,1)}& (\wt a,2)& (\wt a+1,3)&\ds \\
		\ds &\textcolor{red}{(\wt a-3,0)}& (\wt a-2,1)& (\wt a-1, 2) & (\wt a,3)& (\wt a+1,4)&\ds \\
		\textcolor{red}{\ds} & \vdots &\vdots & \vdots&\vdots & \vdots &\dots
	\end{matrix}
\end{equation}

\subsection{The extension theorem for $\infty$-restricted conformal blocks}

We now prove our main theorem concerning the extension of $\infty$-restricted conformal blocks. A more general twisted version was proved in \cite[Theorems 5.18, 5.19]{GLZ24} using the Riemann--Roch theorem for algebraic curves. Here we give a purely algebraic proof.

\begin{thm}\label{thm:main}
	Let $\varphi\in \mathscr{C}_{\mathrm{res}}\left(\Sigma(\Om((M^3)'), M^1, M^2)\right)$. Then $\wphi$, given by \eqref{eq:defextenedphi} and \eqref{eq:defextensionphi2}, is invariant under the action of the three-pointed chiral Lie algebra $\mathcal{L}_{\P^1\bs\{\infty,1,0\}}(V)$.
	In particular, we have an isomorphism of vector spaces
	\begin{equation}\label{eq:isomofconformalblocks}
		F:\mathscr{C}_{\mathrm{res}}\left(\Sigma(\Om((M^3)'), M^1, M^2)\right)\cong \mathscr{C}\left(\Sigma(\bar{M}(\Om((M^3)')), M^1, M^2)\right),\quad \varphi\mapsto \widetilde{\varphi},
	\end{equation}
	whose inverse is the restriction map $G$ in \eqref{eq:cfbrestriction}.
\end{thm}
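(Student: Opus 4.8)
The plan is to verify directly that $\widetilde\varphi$ annihilates the image of every element of $\mathcal{L}_{\P^1\bs\{\infty,1,0\}}(V)$ acting on the triple tensor product, and then deduce the isomorphism $F$ from this together with Lemma~\ref{lm:estimate}. For the invariance, I would reduce to the spanning elements of $\mathcal{L}_{\P^1\bs\{\infty,1,0\}}(V)$ listed in Lemma~\ref{lm:spanningchiralLie}, namely $a\o\frac{z^{\wt a-1}}{z-1}$, $a\o z^{\wt a-1}$, $a\o z^{\wt a-k}$ with $k\geq 2$, and $a\o z^{\wt a+l}$ with $l\geq 0$. For each such element $X$ and each spanning vector $b'_1(n_1)\cdots b'_r(n_r)v'_3\o v_1\o v_2$ of $\bar M(M^3(0)^\ast)\o_\C M^1\o_\C M^2$, I want to show $\braket*{\widetilde\varphi}{X.(b'_1(n_1)\cdots b'_r(n_r)v'_3\o v_1\o v_2)}=0$. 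The module action of $X$ on the $\infty$-slot is given by $\rho_\infty$ in \eqref{eq:inftyaction}, which is a linear combination of operators $a'(n-m-j)$ applied to $b'_1(n_1)\cdots b'_r(n_r)v'_3$; the key structural point is that, after applying the Jacobi identity (equivalently the residue Jacobi identity \eqref{eq:resJacobi}) to move $a'$ past the $b'_i$, the result is again a linear combination of spanning elements of $\bar M(M^3(0)^\ast)$, so I can use the recursive defining formula \eqref{eq:defextensionphi2} for $\widetilde\varphi$ on both sides.

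The core computation is an induction on $r$, the length of the monomial $b'_1(n_1)\cdots b'_r(n_r)v'_3$. The base case $r=0$ (i.e. plain $v'_3\in M^3(0)^\ast$) is where the $\infty$-restricted invariance of $\varphi$ itself is used: for $X$ in $\mathcal{L}_{\P^1\bs\{0,1\}}(V)_{\leq 0}$ the statement is exactly that $\varphi$ is a restricted conformal block, while for the remaining spanning elements (those $X$ raising degree, like $a\o z^{\wt a+l}$) one uses \eqref{eq:defextenedphi}: by construction $\braket*{\widetilde\varphi}{b'(n)v'_3\o v_1\o v_2}$ is \emph{defined} so that the Jacobi-type identity \eqref{eq:defextensionedphi0} holds, which is precisely the invariance statement for $b\o z^n$. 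For the inductive step, I would write the action of $X$ on $b'_1(n_1)\cdots b'_r(n_r)v'_3\o v_1\o v_2$, commute the operator $\rho_\infty(X)$ past $b'_1(n_1)$ using the Borcherds/Jacobi identity to produce (i) terms where $\rho_\infty(X)$ acts on the shorter monomial $b'_2(n_2)\cdots b'_r(n_r)v'_3$ and (ii) correction terms of the form $(a(k)b_1)'(\cdots)$ in the $\infty$-slot; apply \eqref{eq:defextensionphi2} to strip off $b'_1(n_1)$ on each side; and invoke the inductive hypothesis together with the compatibility of $\rho_1$ on $M^1$ and $\rho_0$ on $M^2$ with the chiral Lie bracket \eqref{eq:chiralLiebracket}. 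The bookkeeping here mirrors the correlation-function arguments of \cite{Liu23,GLZ23} but is carried out purely inside the conformal-block formalism.

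The main obstacle I anticipate is the bookkeeping in this inductive step: reconciling the definition \eqref{eq:defextensionphi2} of $\widetilde\varphi$ — which peels off the \emph{leftmost} factor $b'_1(n_1)$ and distributes it to the $M^1$ and $M^2$ slots via $\rho_1,\rho_0$ for the specific rational function $z^{n_1}$ — with the general chiral Lie algebra element $X$ attached to a rational function $\tfrac{z^n}{(z-1)^m}$, so that the two operations genuinely commute up to the Jacobi-identity correction terms. Concretely one must check that $\rho_\infty(X)$ and "the operation of inserting $b_1$ with coordinate $z^{n_1}$" satisfy the expected commutation relation, which is again an instance of the residue Jacobi identity \eqref{eq:resJacobi}; I expect this to work because \eqref{eq:defextensionedphi0} was engineered for exactly this purpose, but making the indices line up (especially the expansions $\iota_{z,1},\iota_{1,z-1},\iota_{1,z}$ in different regions) is the delicate part. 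Once invariance of $\widetilde\varphi$ is established, $F$ is well-defined; it is injective because $\widetilde\varphi|_{M^3(0)^\ast\o M^1\o M^2}=\varphi$ by \eqref{eq:defextenedphi}, and $G\circ F=\id$; surjectivity of $F$ (equivalently injectivity of $G$, which is Lemma~\ref{lm:estimate}) then forces $F$ and $G$ to be mutually inverse isomorphisms, using that $\bar M(M^3(0)^\ast)$ is generated by its bottom degree.
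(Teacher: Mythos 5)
Your strategy coincides with the paper's: reduce to the four families of spanning elements in Lemma~\ref{lm:spanningchiralLie}, verify invariance of $\wphi$ on spanning vectors of $\bar{M}(M^3(0)^\ast)\o_\C M^1\o_\C M^2$ by commuting the $\infty$-action past the primed modes via the commutator formula \eqref{eq:primebracket} and matching the resulting correction terms against Jacobi-identity rearrangements in the $M^1$- and $M^2$-slots, then conclude via $G\circ F=\Id$ together with the injectivity of $G$. The one place you do more work than necessary is the induction on the length $r$ of the monomial: the paper invokes the fact recalled in Section~\ref{sec:2.1.5} (from \cite{LL04}) that $\bar{M}(M^3(0)^\ast)$ is already spanned by the length-one vectors $b'(n)v'_3$ with $v'_3\in M^3(0)^\ast$, so only the case $r=1$ needs to be checked and the whole argument reduces to four explicit residue computations. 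If you do run the induction, be aware that the commutator correction terms $(b_1(i)a)'(\,\cdot\,)$ applied to the shorter monomial are again monomials of length $r$, not $r-1$, so the induction closes only after you repackage them as $\rho_\infty$ of the new chiral Lie algebra elements $b_1(i)a\o z^{n+n_1-i}(z-1)^{-m}$ acting on the shorter vector and cancel them against the $\rho_1$/$\rho_0$ Jacobi rearrangements --- this is precisely the cancellation $(A)=(C)-(E)$ and $(B)=-(D)+(F)$ carried out in the paper's Case I; your sketch gestures at this correctly, so the approach goes through.
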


	\begin{proof}
	We need to show $\braket*{\wphi}{\mathcal{L}_{\P^1\bs\{\infty,1,0\}}(V). (\bar{M}(\Om((M^3)'))\otimes_\C M^1\otimes_\C M^2)}=0$. 
	By Lemma~\ref{lm:spanningchiralLie}, it suffices to show that 
	\begin{align}
		&\braket*{\wphi}{\left(a\otimes \frac{z^{\wt a-1}}{z-1}\right). (b'(n)v'_3\o v_1\o v_2)}=0, \label{eq:inv-1}\\
		&\braket*{\wphi}{\left(a\otimes z^{\wt a-1}\right).(b'(n)v'_3\o v_1\o v_2)}=0,\label{eq:inv-2}\\
		& \braket*{\wphi}{\left(a\otimes z^{\wt a-k}\right). (b'(n)v'_3\o v_1\o v_2)}=0,\quad k\geq 2,\label{eq:invgeneral1}\\
		& \braket*{\wphi}{\left(a\otimes z^{\wt a+l}\right). (b'(n)v'_3\o v_1\o v_2)}=0,\quad l\geq 0, \label{eq:invgeneral2}
	\end{align}
	for any homogeneous $a\in V$ and any $b'(n)v'_3\o v_1\o v_2\in  \bar{M}(\Om((M^3)'))\otimes_\C M^1\otimes_\C M^2$. To simplify notation, we omit the 
$dz$ symbol in the residue calculations below.

	Case I. Proof of \eqref{eq:inv-1}. 
	
	For any $v'_3\in \Om((M^3)')$, since $\deg (a'(\wt a-j-2))=-\wt a+\wt a-j-2+1=-j-1<0$ for any $j\geq 0$, we have 
	$$\Res_{z=\infty} Y_{\bar{M}(\Om((M^3)'))}(\vartheta(a),z^{-1})v'_3\iota_{z,1}\left(\frac{z^{\wt a-1}}{z-1}\right)=-\sum_{j\geq 0} a'(\wt a-j-2)v'_3=0.$$
	Moreover, let $M$ be a $V$-module, and let $M'$ be the contragredient module of $M$. For any $v'\in M'$, $v\in M$, $a,b\in V$, and $m,n\in \Z$, by the Jacobi identity for $Y_M$, we have 
	\begin{equation}\label{eq:primebracket}
		\begin{aligned}
			\<[b'(n),a'(m)]v',v\>&=\<v', [a(m),b(n)v]\>=-\sum_{i\geq 0}\binom{n}{i}\<v', (b(i)a)(m+n-i)v\>\\
			&=-\sum_{i\geq 0}\<(b(i)a)'(m+n-i)v',v\>.
		\end{aligned}
	\end{equation}
	It follows that for any $v'_3\in \Om((M^3)')$, 
	\begin{align*}
		&\Res_{z=\infty} Y_{\bar{M}(\Om((M^3)'))}(\vartheta(a),z_2^{-1})b'(n)v'_3 \iota_{z,1}\left(\frac{z^{\wt a-1}}{z-1}\right)\\
		&=\Res_{z=\infty}\left( b'(n)Y_{\bar{M}(\Om((M^3)'))}(\vartheta(a),z_2^{-1})v'_3- [b'(n),  Y_{\bar{M}(\Om((M^3)'))}(\vartheta(a),z_2^{-1})]v'_3 \right) \iota_{z,1}\left(\frac{z^{\wt a-1}}{z-1}\right)\\
		&=\Res_{z=\infty} Y_{\bar{M}(\Om((M^3)'))}(\vartheta(b(i)a),z_2^{-1})v'_3\iota_{z,1}\left(\frac{z^{\wt a-1}}{z-1}\right).
	\end{align*}
	Then, by \eqref{eq:defextensionedphi0} and \eqref{eq:actionofchiralliealgebraelements}, together with the equation above, we have 
	\begin{align*}
		&\braket*{\wphi}{\left(a\otimes \frac{z^{\wt a-1}}{z-1}\right). (b'(n)v'_3\o v_1\o v_2)}\\
		&=\braket*{\wphi}{\Res_{z=\infty} Y_{\bar{M}(\Om((M^3)'))}(\vartheta(a),z_2^{-1})b'(n)v'_3 \o v_1\o v_2}\iota_{z_2,1}\left(\frac{z_2^{\wt a-1}}{z_2-1}\right)\\
		&\ \ \ + \braket*{\wphi}{b'(n)v'_3\otimes \Res_{z_2=1}Y_{M^1}(a,z_2-1)v_1\o v_2} \iota_{1,z_2-1}\left(\frac{z_2^{\wt a-1}}{z_2-1}\right)\\
		&\ \ \ + \braket*{\wphi}{b'(n)v'_3\o v_1\o \Res_{z_2=0} Y_{M^2}(a,z_2)v_2}\iota_{1,z_2}\left(\frac{z_2^{\wt a-1}}{z_2-1}\right)\\
		&=\sum_{i\geq 0} \binom{n}{i}\braket*{\wphi}{\Res_{z=\infty} Y_{\bar{M}(\Om((M^3)'))}(\vartheta(b(i)a),z_2^{-1})v'_3\o v_1\o v_2} z_2^{n-i}\iota_{z_2,1}\left(\frac{z_2^{\wt a-1}}{z_2-1}\right)\\
		&\ \ \ +\braket*{\varphi}{v'_3 \o \Res_{z_1=1}\Res_{z_2=1}Y_{M^1}(b,z_1-1)Y_{M^1}(a,z_2-1)v_1\o v_2} \iota_{1,z_1-1}(z_1^n)\iota_{1,z_2-1}\left(\frac{z_2^{\wt a-1}}{z_2-1}\right)\\
		&\ \ \ +\braket*{\varphi}{v'_3\o \Res_{z_2=1}Y_{M^1}(a,z_2-1)v_1\o \Res_{z_1=0} Y_{M^2}(b,z_1)v_2} z_1^n \iota_{1,z_2-1}\left(\frac{z_2^{\wt a-1}}{z_2-1}\right)\\
		&\ \ \ + \braket*{\varphi}{v'_3\o \Res_{z_1=1}Y_{M^1}(b,z_1-1)v_1\o \Res_{z_2=0} Y_{M^2}(a,z_2)v_2} \iota_{1,z_1-1}(z_1^n) \iota_{1,z_2}\left(\frac{z_2^{\wt a-1}}{z_2-1}\right)\\
		&\ \ \ +\braket*{\varphi}{v'_3\o v_1\o \Res_{z_1=0}\Res_{z_2=0}Y_{M^2}(b,z_1)Y_{M^2}(a,z_2)v_2}z_1^n \iota_{1,z_2}\left(\frac{z_2^{\wt a-1}}{z_2-1}\right)\\
		&= -\underbrace{\sum_{i\geq 0}\binom{n}{i}\braket*{\varphi}{v'_3\otimes \Res_{z_2=1}Y_{M^1}(b(i)a,z_2-1)v_1\otimes v_2} \iota_{1,z_2-1}\left(\frac{z_2^{\wt a-1+n-i}}{z_2-1}\right)}_{(A)}\\
		&\ \ \ -\underbrace{\sum_{i\geq 0}\binom{n}{i}\braket*{\varphi}{v'_3\otimes v_1\otimes \Res_{z_2=0}Y_{M^2}(b(i)a,z_2)v_2} \iota_{1,z_2}\left(\frac{z_2^{\wt a-1+n-i}}{z_2-1}\right)}_{(B)}\\
		&\ \ \ +\underbrace{\braket*{\varphi}{v'_3 \o \Res_{z_1=1}\Res_{z_2=1}Y_{M^1}(b,z_1-1)Y_{M^1}(a,z_2-1)v_1\o v_2} \iota_{1,z_1-1}(z_1^n)\iota_{1,z_2-1}\left(\frac{z_2^{\wt a-1}}{z_2-1}\right)}_{(C)}\\
		&\ \ \ -\underbrace{\braket*{\varphi}{v'_3\o v_1\o \Res_{z_1=0} \Res_{z_2=0} Y_{M^2}(a,z_2) Y_{M^2}(b,z_1)v_2} z_1^n \iota_{1,z_2}\left(\frac{z_2^{\wt a-1}}{z_2-1}\right)}_{(D)}\\
		&\ \ \ - \underbrace{\braket*{\varphi}{v'_3\o \Res_{z_2=1}\Res_{z_1=1}Y_{M^1}(a,z_2-1)Y_{M^1}(b,z_1-1)v_1\o  v_2} \iota_{1,z_1-1}(z_1^n) \iota_{1,z_2-1}\left(\frac{z_2^{\wt a-1}}{z_2-1}\right)}_{(E)}\\
		&\ \ \ +\underbrace{\braket*{\varphi}{v'_3\o v_1\o \Res_{z_1=0}\Res_{z_2=0}Y_{M^2}(b,z_1)Y_{M^2}(a,z_2)v_2}z_1^n \iota_{1,z_2}\left(\frac{z_2^{\wt a-1}}{z_2-1}\right)}_{(F)}\\
		&=-(A)-(B)+(C)-(D)-(E)+(F),
	\end{align*}
	where the last equality follows from the fact that $\varphi\in  \mathscr{C}_{\mathrm{res}}\left(\Sigma(\Om((M^3)'), M^1, M^2)\right)$ is invariant under the action of $a\otimes \frac{z^{\wt a-1}}{z-1}\in \mathcal{L}_{\P^1\bs\{0,1\}}(V)_{\leq 0}$. Using the Jacobi identity for $Y_{M^1}$, we have 
	
	\begin{align*}
		&(C)-(E)\\
		&=\braket*{\varphi}{v'_3 \o \Res_{z_1=1}\Res_{z_2=1}Y_{M^1}(b,z_1-1)Y_{M^1}(a,z_2-1)v_1\o v_2} \iota_{1,z_1-1}(z_1^n)\iota_{1,z_2-1}\left(\frac{z_2^{\wt a-1}}{z_2-1}\right)\\
		&\ \ \ - \braket*{\varphi}{v'_3\o \Res_{z_2=1}\Res_{z_1=1}Y_{M^1}(a,z_2-1)Y_{M^1}(b,z_1-1)v_1\o  v_2} \iota_{1,z_1-1}(z_1^n) \iota_{1,z_2-1}\left(\frac{z_2^{\wt a-1}}{z_2-1}\right)\\
		&=\braket*{\varphi}{v'_3\o \Res_{z_2=1}\Res_{z_1-z_2=0} Y_{M^1}(Y(b,z_1-z_2)a,z_2-1)v_1\o v_2} \iota_{z_2,z_1-z_2}(z^n_1) \iota_{1,z_2-1}\left(\frac{z_2^{\wt a-1}}{z_2-1}\right)\\
		&=\sum_{i\geq 0} \binom{n}{i}\braket*{\varphi}{v'_3\o \Res_{z_2=1} Y_{M^1}(b(i)a,z_2-1)v_1\otimes v_2} \iota_{1,z_2-1}\left(\frac{z_2^{\wt a-1+n-i}}{z_2-1}\right)\\
		&=(A).  
	\end{align*}
	Similarly, using the Jacobi identity for $Y_{M^2}$, we have
	\begin{align*}
		&-(D)+(F)\\
		&=-\braket*{\varphi}{v'_3\o v_1\o \Res_{z_1=0} \Res_{z_2=0} Y_{M^2}(a,z_2) Y_{M^2}(b,z_1)v_2} z_1^n \iota_{1,z_2}\left(\frac{z_2^{\wt a-1}}{z_2-1}\right)\\
		& \ \ \ +\braket*{\varphi}{v'_3\o v_1\o \Res_{z_1=0}\Res_{z_2=0}Y_{M^2}(b,z_1)Y_{M^2}(a,z_2)v_2}z_1^n \iota_{1,z_2}\left(\frac{z_2^{\wt a-1}}{z_2-1}\right)\\
		&= \braket*{\varphi}{v'_3\o v_1\o \Res_{z_2=0}\Res_{z_1-z_2=0} Y_{M^2}(Y(b,z_1-z_2)a,z_2)v_2 } \iota_{z_2,z_1-z_2}(z_1^n) \iota_{1,z_2}\left(\frac{z_2^{\wt a-1}}{z_2-1}\right)\\
		&=\sum_{i\geq 0}\binom{n}{i}\braket*{\varphi}{v'_3\otimes v_1\otimes \Res_{z_2=0}Y_{M^2}(b(i)a,z_2)v_2} \iota_{1,z_2}\left(\frac{z_2^{\wt a-1+n-i}}{z_2-1}\right)\\
		&=(B).
	\end{align*}
	This shows \eqref{eq:inv-1} because
	$$\braket*{\wphi}{\left(a\otimes \frac{z^{\wt a-1}}{z-1}\right). (b'(n)v'_3\o v_1\o v_2)}=-(A)-(B)+(C)-(D)-(E)+(F)=0.$$
	
	Case II. Proof of \eqref{eq:inv-2}. It follows from \eqref{eq:defextenedphi} that
	\begin{align*}
		&\braket*{\wphi}{\left(a\otimes z^{\wt a-1}\right).(b'(n)v'_3\o v_1\o v_2)}\\
		&=-\underbrace{\braket*{\wphi}{a'(\wt a-1)b'(n)v'_3\o v_1\o v_2}}_{(G)} +\underbrace{\braket*{\wphi}{b'(n)v'_3\o \Res_{z_2=1}Y_{M^1}(a,z_2-1)v_1\o v_2} \iota_{1,z_2-1}(z_2^{\wt a-1})}_{(H)}\\
		&\ \ + \underbrace{\braket*{\wphi}{b'(n)v'_3\o v_1\o \Res_{z_2=0} Y_{M^2}(a,z_2)v_2} z_2^{\wt a-1}}_{(I)}\\
		&=-(G)+(H)+(I).
	\end{align*}
	Note that $a'(\wt a-1)v'_3=o(a)v'_3$ for $v'_3\in \Om((M^3)')$. Then, by \eqref{eq:primebracket} and \eqref{eq:defextenedphi}, we have
	\begin{align*}
		(G)&=\braket*{\wphi}{b'(n)o(a)v'_3\o v_1\o v_2}+\sum_{i\geq 0}\binom{n}{i}\braket*{\wphi}{(b(i)a)'(\wt a-1+n-i)v'_3\o v_1\o v_2}\\
		&=\underbrace{\braket*{\varphi}{o(a)v'_3\o \Res_{z_1=1}Y_{M^1}(b,z_1-1)v_1\o v_2} \iota_{1,z_1-1}(z_1^n)}_{(G1)}\\
		&\ \ \ +\underbrace{\braket*{\varphi}{o(a)v'_3\o v_1\o \Res_{z_1=0} Y(b,z_1)v_2}z_1^n}_{(G2)} \\
		&\ \ \ +\underbrace{\braket*{\varphi}{v'_3\o \Res_{z_2=1}\Res_{z_1-z_2=0} Y_{M^1}(Y(b,z_1-z_2)a,z_2-1)v_1\o v_2}\iota_{z_2,z_1-z_2}(z_1^n)z_2^{\wt a-1}}_{(G3)}\\
		&\ \ \ +\underbrace{\braket*{\varphi}{v'_3\o v_1\o \Res_{z_2=0}\Res_{z_1-z_2=0} Y_{M^2}(Y(b,z_1-z_2)a,z_2)v_2}\iota_{z_2,z_1-z_2}(z_1^n)z_2^{\wt a-1}}_{(G4)}\\
		&=(G1)+(G2)+(G3)+(G4). 
	\end{align*}
	On the other hand, using the invariance of $\varphi$ under the action of $a\otimes z^{\wt a-1}\in \mathcal{L}_{\P^1\bs\{0,1\}}(V)_{\leq 0}$, together with \eqref{eq:defextenedphi}, we have 
	
	\begin{align*}
		(H)&=\braket*{\varphi}{v'_3\o \Res_{z_1=1}\Res_{z_2=1}Y_{M^1}(b,z_1-1)Y_{M^1}(a,z_2-1)v_1\o v_2} \iota_{1,z_2-1}(z_2^{\wt a-1})\iota_{1,z_1-1}(z_1^{n})\\
		&\ \ \ +\braket*{\varphi}{v'_3\o \Res_{z_2=1}Y_{M^1}(a,z_2-1)v_1\o \Res_{z_1=0}Y_{M^2}(b,z_1)v_2} \iota_{1,z_2-1}(z_2^{\wt a-1}) z_1^n \\
		&= \underbrace{\braket*{\varphi}{v'_3\o \Res_{z_1=1}\Res_{z_2=1}Y_{M^1}(b,z_1-1)Y_{M^1}(a,z_2-1)v_1\o v_2} \iota_{1,z_2-1}(z_2^{\wt a-1})\iota_{1,z_1-1}(z_1^{n})}_{(H1)}\\
		&\ \ \ +\underbrace{\braket*{\varphi}{o(a)v'_3\o v_1\o \Res_{z_1=0} Y_{M^2}(b,z_1)v_2}z_1^n}_{(H2)}\\
		&\ \ \ -\underbrace{\braket*{\varphi}{v'_3\o v_1\o \Res_{z_2=0} \Res_{z_1=0}Y_{M^2}(a,z_2)Y_{M^1}(b,z_1)v_2}z_1^n z_2^{\wt a-1}}_{(H3)}\\
		&=(H1)+(H2)-(H3). \\
		(I)&=\braket*{\varphi}{v'_3\o \Res_{z_1=1}Y_{M^1}(b,z_1-1)v_1\o \Res_{z_2=0} Y_{M^2}(a,z_2)v_2} z_2^{\wt a-1} \iota_{1,z_1-1}(z_1^n)\\
		&\ \ \ + \braket*{\varphi}{v'_3\o v_1\o \Res_{z_1=0}\Res_{z_2=0} Y_{M^2}(b,z_1)Y_{M^1}(a,z_2)v_2} z_2^{\wt a-1} z_1^n\\
		&=\underbrace{\braket*{\varphi}{o(a)v'_3\o \Res_{z_1=1}Y_{M^1}(b,z_1-1)v_1\o v_2} \iota_{1,z_1-1}(z_1^n)}_{(I1)}\\
		&\ \ \ - \underbrace{\braket*{\varphi}{v'_3\o \Res_{z_2=1}\Res_{z_1=1} Y_{M^1}(a,z_2-1)Y_{M^1}(b,z_1-1)v_1\o v_2} \iota_{1,z_2-1}(z_2^{\wt a-1}) \iota_{1,z_1-1}(z_1^{n})}_{(I2)}\\
		&\ \ \ + \underbrace{\braket*{\varphi}{v'_3\o v_1\o \Res_{z_1=0}\Res_{z_2=0} Y_{M^2}(b,z_1)Y_{M^1}(a,z_2)v_2} z_2^{\wt a-1} z_1^n}_{(I3)}\\
		&=(I1)-(I2)+(I3).
	\end{align*}
	Note that $-(G1)+(I1)=0$ and $-(G2)+(H2)=0$. By the Jacobi identity for $Y_{M^1}$ and $Y_{M^2}$, we have $(H1)-(I2)=(G3)$ and $-(H3)+(I3)=(G4)$. 
	\begin{align*}
		&\braket*{\wphi}{\left(a\otimes z^{\wt a-1}\right).(b'(n)v'_3\o v_1\o v_2)}=-(G)+(H)+(I)\\
		&=-(G1)-(G2)-(G3)-(G4)+(H1)+(H2)-(H3)+(I1)-(I2)+(I3)=0.
	\end{align*}
	
	Case III. Proof of \eqref{eq:invgeneral1}. 
	
	Let $k\geq 2$. Note that $a'(\wt a-k)v'_3=0$ for  $v'_3\in \Om((M^3)')$. Similar to the argument above, using \eqref{eq:primebracket}, we have
	
	\begin{align*}
		& \braket*{\wphi}{\left(a\otimes z^{\wt a-k}\right). (b'(n)v'_3\o v_1\o v_2)}\\
		&=-\braket*{\wphi}{\sum_{i\geq 0}\binom{n}{i} (b(i)a)'(\wt a-k+n-i)v'_3\o v_1\o v_2}\\
		&\ \ \ +\braket*{\wphi}{b'(n)v'_3\o \Res_{z_2=1}Y_{M^1}(a,z_2-1)v_1\o v_2} \iota_{1,z_2-1}(z_2^{\wt a-k})\\
		&\ \ \ +\braket*{\wphi}{b'(n)v'_3\o v_1\o \Res_{z_2=0} Y_{M^2}(a,z_2)v_2} z_2^{\wt a-k}\\
		&= -\sum_{i\geq 0}\binom{n}{i}\braket*{\varphi}{v'_3\o \Res_{z_2=1} Y_{M^1}(b(i)a,z_2-1) v_1\o v_2} \iota_{1,z_2-1}(z_2^{\wt a-k+n-i})\\
		&\ \ \ -\sum_{i\geq 0} \binom{n}{i} \braket*{\varphi}{v'_3\o v_1\o \Res_{z_2=0} Y_{M^2}(b(i)a,z_2)v_2} z_2^{\wt a-k+n-i}\\
		&\ \ \ +\braket*{\varphi}{v'_3\o \Res_{z_1=1}\Res_{z_2=1}Y_{M^1}(b,z_1-1)Y_{M^1}(a,z_2-1)v_1\o v_2} \iota_{1,z_1-1}(z_1^n)  \iota_{1,z_2-1}(z_2^{\wt a-k})\\
		&\ \ \ + \braket*{\varphi}{v'_3\o \Res_{z_2=1}Y_{M^1}(a,z_2-1)v_1\o \Res_{z_1=0}Y_{M^2}(b,z_1)v_2} z_1^n\iota_{1,z_2-1}(z_2^{\wt a-k})\\
		&\ \ \ +\braket*{\varphi}{v'_3\o \Res_{z_1=1} Y_{M^1}(b,z_1-1)v_1\o \Res_{z_2=0}Y_{M^2}(a,z_2)v_2} \iota_{1,z_1-1}(z_1^n) z_2^{\wt a-k}\\
		&\ \ \ +\braket*{\varphi}{v'_3\o v_1\o \Res_{z_1=0}\Res_{z_2=0} Y_{M^2}(b,z_1)Y_{M^2}(a,z_2)v_2} z_1^n z_2^{\wt a-k}\\
		&=-\braket*{\varphi}{v'_3\o \Res_{z_2=1}\Res_{z_1-z_2=0}Y_{M^1}(Y(b,z_1-z_2)a,z_2-1) v_1\o v_2} \iota_{z_2,z_1-z_2}(z_1^n)\iota_{1,z_2-1}(z_2^{\wt a-k})\\
		&\ \ \ - \braket*{\varphi}{v'_3\o v_1\o \Res_{z_2=0}\Res_{z_1-z_2=0}Y_{M^1}(Y(b,z_1-z_2)a,z_2)v_2} \iota_{z_2,z_1-z_2}(z_1^n)z_2^{\wt a-k}\\
		&\ \ \ +\braket*{\varphi}{v'_3\o \Res_{z_1=1}\Res_{z_2=1}Y_{M^1}(b,z_1-1)Y_{M^1}(a,z_2-1)v_1\o v_2} \iota_{1,z_1-1}(z_1^n)  \iota_{1,z_2-1}(z_2^{\wt a-k})\\
		&\ \ \ -\braket*{\varphi}{v'_3\o v_1\o \Res_{z_2=0}\Res_{z_1=0} Y_{M^2}(a,z_2)Y_{M^2}(b,z_1)v_2} z_1^n z_2^{\wt a-k}\\
		&\ \ \ -\braket*{\varphi}{v'_3\o \Res_{z_2=1}\Res_{z_1=1}Y_{M^1}(a,z_2-1)Y_{M^1}(b,z_2-1)v_1\o v_2}  \iota_{1,z_1-1}(z_1^n)  \iota_{1,z_2-1}(z_2^{\wt a-k})\\
		&\ \ \ +\braket*{\varphi}{v'_3\o v_1\o \Res_{z_1=0}\Res_{z_2=0} Y_{M^2}(b,z_1)Y_{M^2}(a,z_2)v_2} z_1^n z_2^{\wt a-k}\\
		&=0. 
	\end{align*}
	
	Case IV. Proof of \eqref{eq:invgeneral2}. Since $\deg (a'(\wt a+l))=l+1\geq 1$, we have $ a'(\wt a+l)b'(n)v'_3\in \bar{M}(\Om((M^3)'))$. Then it follows from \eqref{eq:defextensionphi2} that 
	\begin{align*}
		&\braket*{\wphi}{\left(a\otimes z^{\wt a+l}\right). (b'(n)v'_3\o v_1\o v_2)}\\
		&=-\braket*{\wphi}{a'(\wt a+l)b'(n)v'_3\o v_1\o v_2}+\braket*{\wphi}{b'(n)v'_3\o \Res_{z_2=1} Y_{M^1}(a,z_2-1)v_1\o v_2} \iota_{1,z_2-1}(z_2^{\wt a+l})\\
		&\ \ \ +\braket*{\wphi}{b'(n)v'_3\o v_1\o \Res_{z_2=0}Y_{M^2}(a,z_2)v_2} z_2^{\wt a+l}\\
		&=0.
	\end{align*}
	Thus, $\braket*{\wphi}{\mathcal{L}_{\P^1\bs\{\infty,1,0\}}(V). (\bar{M}(\Om((M^3)'))\otimes_\C M^1\otimes_\C M^2)}=0$, and \[\wphi\in \mathscr{C}\left(\Sigma(\bar{M}(\Om((M^3)')), M^1, M^2)\right).\] 
	
	It remains to show that the induced map $F$ in \eqref{eq:isomofconformalblocks} is an isomorphism of vector spaces. Choose $b'(n)=\vac'(-1)$ in \eqref{eq:defextenedphi}; we have 
	\begin{align*}
		\braket*{\wphi}{v'_3\o v_1\o v_2}&=\braket*{\wphi}{\vac'(-1)v'_3\o v_1\o v_2}\\
		&=\sum_{j\geq 0} \binom{-1}{j}\braket*{\varphi}{v'_3\o \vac(j)v_1\o v_2}+\braket*{\varphi}{v'_3\o v_1\o \vac(-1)v_2}\\
		&=\braket*{\varphi}{v'_3\o v_1\o v_2}.
	\end{align*}
	Hence $\wphi|_{\Om((M^3)')\o M^1\o M^2}=\varphi$, and so $G\circ F=\Id$. Conversely, let $\psi\in \mathscr{C}\left(\Sigma(\bar{M}(\Om((M^3)')), M^1, M^2)\right)$. Using the invariance of $\psi$ under $b\otimes z^n\in \mathcal{L}_{\P^1\bs\{\infty,1,0\}}(V)$, with $-\wt b+n+1\geq 0$, we have 
	\begin{align*}
		&\psi(b'(n)v'_3\o v_1\o v_2)\\
		&=-\psi\left((b\o z^n).(v'_3\o v_1\o v_2)\right)+\psi(v'_3\o \Res_{z=1}Y_{M^1}(a,z-1)v_1\o v_2)\iota_{1,z-1}(z^n)\\
		&\ \ \ + \psi(v'_3\o v_1\o \Res_{z=0} Y_{M^2}(b,z)v_2)z^n\\
		&=0+ G(\psi)(v'_3\o \Res_{z=1}Y_{M^1}(a,z-1)v_1\o v_2)\iota_{1,z-1}(z^n)+G(\psi)(v'_3\o v_1\o \Res_{z=0} Y_{M^2}(b,z)v_2)z^n\\
		&= (F\circ G)(\psi) (b'(n)v'_3\o v_1\o v_2).
	\end{align*}
	Since $\bar{M}(\Om((M^3)'))$ is spanned by $b'(n)v'_3$ with $-\wt b+n+1\geq 0$, it follows that $F\circ G=\Id$. Thus, $F$ in \eqref{eq:isomofconformalblocks} is an isomorphism of vector spaces. 
\end{proof}

		\section{Tensor product of modules over $A(V)$}\label{sec:6}
In this section, we prove that if the VOA $V$ is strongly rational, then the contracted tensor product $\odot$ introduced in Definition~\ref{df:M1odotM2} induces a fusion tensor structure on the module category of the Zhu algebra $\Mod(A(V))$, by proving a variant of the fusion rules theorem.

\subsection{Fusion rules determined by $M^1\odot M^2$}
The following lemma follows from \cite[Theorem 8.1]{DLM98}; see Section~\ref{sec:2.1.5}.
\begin{lm}\label{lm6.1}
	Let $V$ be a rational VOA, and let $M$ be an irreducible (ordinary) $V$-module. Then the contragredient module $M'$ is isomorphic to the generalized Verma module $\bar{M}(\Om(M'))$.
\end{lm}
\begin{proof}
	$\Om(M)=M(0)$ is an irreducible $A(V)$-module, so $\Om(M)^\ast$ is an irreducible left $A(V)$-module via the anti-involution $\theta$. In other words,
	$$
	A(V)\times \Om(M)^\ast\ra \Om(M)^\ast,\quad ([a],v')\mapsto [a].v',
	$$
	where
	\[
	\braket{[a]. v'}{v}
	=\braket{v'.[\theta(a)]}{v}
	=\braket{v'}{[\theta(a)].v}
	=\braket{v'}{o(\theta(a))v},
	\]
	see \eqref{eq:contraleftA(V)}.
	
	On the other hand, the bottom degree $\Om(M')=M(0)^\ast=\Om(M)^\ast$ of the contragredient module $M'$ is a left $A(V)$-module, with
$
	\braket{[a]\ast v'}{v}
	=\braket{o(a)v'}{v}
	=\braket{v'}{o(\theta(a))v}.
$
	Hence $\Om(M')$ is isomorphic to $\Om(M)^\ast$ as a left $A(V)$-module. Since $
	(\bar{M}(-)\dashv \Om(-)):\Mod(A(V))\rightleftarrows \mathsf{Adm}(V)$
	is an adjoint equivalence when $V$ is rational, we have
$
	M'\cong \bar{M}(\Om(M)^\ast)
$
	as $V$-modules.
\end{proof}

Putting together Theorem~\ref{thm:AVaction}, the identification \eqref{eq:homspaceequality}, and Theorem~\ref{thm:main}, we obtain a hom-space description of intertwining operators in terms of the contracted tensor product $M^1\odot M^2$ from Definition~\ref{df:M1odotM2}.

\begin{thm}\label{thm:fusion}
	Let $M^1,M^2,M^3$ be ordinary $V$-modules. Suppose that the contragredient module $(M^3)'$ is isomorphic to the generalized Verma module $\bar{M}(\Om((M^3)'))$ associated with the left $A(V)$-module $\Om((M^3)')$, and that $\Om((M^3)')^\ast\cong \Om(M^3)$. Assume further that either $M^1$ or $M^2$ is generated by its bottom degree $\Om(M^1)$ or $\Om(M^2)$, respectively. Then
	\begin{equation}\label{eq:fusion}
		I\fusion{M^1}{M^2}{M^3}\cong \Hom_{A(V)}(M^1\odot M^2,\Om(M^3)).
	\end{equation}
	In particular, if $V$ is rational, then \eqref{eq:fusion} holds for any irreducible $V$-modules $M^1,M^2$, and $M^3$.
\end{thm}
\begin{proof}
	Equation \eqref{eq:fusion} follows from the commutative diagram
	\[
	\begin{tikzcd}
		I\fusion{M^1}{M^2}{(\bar{M}(\Om((M^3)')))'}\arrow[r,"\text{Prop.}~\ref{prop:IOconformalblocks}","\cong"']\arrow[d,dashed]
		& \mathscr{C}\left(\Sigma(\bar{M}(\Om((M^3)')), M^1, M^2)\right)\arrow[d,"\text{Thm.}~\ref{thm:main}","\cong"']\\
		\Hom_{L(V)_0}(M^1\odot M^2,\Om(M^3))
		& \mathscr{C}_{\mathrm{res}}\left(\Sigma(\Om((M^3)'), M^1, M^2)\right)\arrow[l,"\text{Prop.}~\ref{prop:inftycfbHom}"',"\cong"]
	\end{tikzcd}
	\]
	Now \eqref{eq:fusion} follows from \eqref{eq:homspaceequality}, since $M^1$ or $M^2$ is generated by its bottom degree. Finally, if $V$ is rational and $M^1,M^2,M^3$ are irreducible, then $(M^3)'\cong \bar{M}(\Om((M^3)'))$ by Lemma~\ref{lm6.1}, and both $M^1$ and $M^2$ are generated by their bottom degrees.
\end{proof}

		\begin{coro}\label{coro:IOtensor}
	Let $V$ be a rational VOA, and let $M^1,M^2$ be irreducible $V$-modules. Then
	\begin{equation}\label{eq:A(V)bimodules}
		A(M^1)\o_{A(V)}\Om(M^2)\cong M^1\odot M^2
	\end{equation}
	as left $A(V)$-modules.
\end{coro}

\begin{proof}
	Let $\mathscr{W}$ be the set of irreducible $V$-modules. Then $\mathscr{W}$ is finite \cite{Z96,DLM98}. 
	Let $M^3\in \mathscr{W}$ with $M^3(0)=\Om(M^3)$. Since $V$ is rational, the Frenkel--Zhu fusion rules theorem holds \cite{FZ92,Li99,Liu23}:
	\begin{equation}\label{eq:fusionrulestheorem}
		I\fusion{M^1}{M^2}{M^3}\cong \Hom_{A(V)}(A(M^1)\o_{A(V)}\Om(M^2), \Om(M^3)). 
	\end{equation}
	It follows from Theorem~\ref{thm:fusion} and \eqref{eq:fusionrulestheorem} that
	$$
	\Hom_{A(V)}(M^1\odot M^2,\Om(M^3))
	\cong I\fusion{M^1}{M^2}{M^3}
	\cong \Hom_{A(V)}(A(M^1)\o_{A(V)} \Om(M^2),\Om(M^3)).
	$$
	Since $V$ is rational, $A(V)$ is a semisimple associative algebra \cite{Z96,DLM98}, and $\{\Om(M^3): M^3\in \mathscr{W} \}$ is a complete list of irreducible $A(V)$-modules. By \eqref{eq:defpi}, there is an epimorphism of left $A(V)$-modules
	$$
	A(M^1)\o_{A(V)}\Om(M^2)\twoheadrightarrow M^1\odot M^2.
	$$
	Since the irreducible $A(V)$-module $\Om(M^3)$ has the same multiplicity in both $A(M^1)\o_{A(V)}\Om(M^2)$ and $M^1\odot M^2$, they are isomorphic as $A(V)$-modules.
\end{proof}

\begin{remark}
	Note that Corollary~\ref{coro:IOtensor} is a consequence of Theorem~\ref{thm:fusion} and the fusion rules theorem~\eqref{eq:fusionrulestheorem}. In other words, to show \eqref{eq:A(V)bimodules}, we need Theorem~\ref{thm:fusion} and \eqref{eq:fusionrulestheorem} to hold in the first place.
	Thus, it does not give a new proof of either of these theorems.
\end{remark}

\subsection{Fusion tensor product on $\Mod(A(V))$ for strongly rational VOAs}
Let $M^1$ and $M^2$ be ordinary $V$-modules. Huang--Lepowsky constructed a $P(z)$-tensor product $M^1\bt_{P(z)} M^2$, where $z$ is a point on $\P^1$ different from $0$ and $\infty$, and $P(z)$ represents the element $(\P^1,(\infty,z,0), (t_\infty,t_z,t_0))$ in the moduli space of three-pointed genus-zero smooth curves with local coordinates, denoted by  $K$ in  \cite[Remark 4.3]{HL95}. 
The space of coinvariants we are considering is associated to the point $(\P^1,(\infty,1,0), (1/z,z-1,z))$ in $K$ \eqref{eq:datum}, which is $P(1)$ in Huang--Lepowsky's notation. 

In the construction of the $P(z)$-tensor product, Huang--Lepowsky introduced the notion of a $P(z)$-intertwining operator:
\begin{align*}
   F:M^1\o_\C M^2&\ra \overline{M^3},\\
  x_0^{-1}\delta\left(\frac{x_1-z}{x_0}\right)Y_{M^3}(a,x_1)F(v_1\o v_2)
  &=  z^{-1}\delta\left(\frac{x_1-x_0}{z}\right) F(Y_{M^1}(a,x_0)v_1\o v_2)\\
  &\ \ \ +  x_0^{-1}\delta\left(\frac{z-x_1}{-x_0}\right) F(v_1\o Y_{M^2}(a,x_1)v_2).
\end{align*}
Denote the space of such intertwining operators by $I_{P(z)}\fusion{M^1}{M^2}{M^3}$. Then the universal property of the $P(z)$-tensor product can also be written as follows \cite[Definition 4.1]{HL95}:
$$
I_{P(z)}\fusion{M^1}{M^2}{M^3}\cong \Hom_{V}(M^1\bt_{P(z)}M^2,M^3).
$$
The space of (formal) intertwining operators $I\fusion{M^1}{M^2}{M^3}$ in Definition~\ref{def:IO} can be identified with the space of $P(z)$-intertwiners by letting $z$ be the complex number $z\in \P^1\bs\{\infty,0\}$ and fixing a branch of the logarithm
$l_p(z)=\log|z|+2p\pi i$ so that $z^h=e^{h l_{p}(z)}$, where $h=h_1+h_2-h_3$.
It follows from Proposition~\ref{prop:IOconformalblocks} that the space of conformal blocks
$\mathscr{C}\left(\Sigma((M^3)', M^1, M^2)\right)$ associated to $(\P^1,\infty,1,0)$ is isomorphic to
$I_{P(1)}\fusion{M^1}{M^2}{M^3}$.

Now, if $(\bar{M}(-)\dashv \Om(-)):\Mod(A(V))\rightleftarrows \mathsf{Adm}(V)$ is an adjoint equivalence, then it is clear that
$\Hom_{V}(M^1\bt_{P(1)}M^2,M^3)\cong \Hom_{A(V)}(\Om(M^1\bt_{P(1)}M^2),\Om(M^3))$ for any irreducible $V$-modules
$M^1,M^2,M^3$, and we have the following commutative diagram:
\begin{equation}\label{tablefusion}
    \begin{tikzcd}
 \mathscr{C}\left(\Sigma((M^3)', M^1, M^2)\right)\arrow[d,"\text{Thm.}~\ref{thm:main}","\cong"']\arrow[r,"\text{Prop.}~\ref{prop:IOconformalblocks}","\cong"']&
 I_{P(z)}\fusion{M^1}{M^2}{M^3}\cong \Hom_{V}(M^1\bt_{P(1)}M^2,M^3)\arrow[d,"\cong"] \\
  \mathscr{C}_{\mathrm{res}}\left(\Sigma(\Om((M^3)')^\ast, M^1, M^2)\right)\arrow[r,"\cong"]\arrow[d,"\text{Prop.}~\ref{prop:inftycfbHom}"',"\cong"]&
  \Hom_{A(V)}(\Om(M^1\bt_{P(1)}M^2),\Om(M^3))\\
  \Hom_{A(V)}(M^1\odot M^2,\Om(M^3)).\arrow[ur,dashed]
\end{tikzcd}
\end{equation}
Therefore, if $A(V)$ is semisimple, then $\Om(M^3)$ has the same multiplicity in both
$M^1\odot M^2$ and $\Om(M^1\bt_{P(1)}M^2)$.
By Lemma~\ref{lmrationality}, we obtain the following proposition.
\begin{prop}
Let $V$ be a rational VOA, and let $M^1,M^2$ be irreducible $V$-modules. Then we have
$$
\Om(M^1\bt_{P(1)}M^2)\cong M^1\odot M^2
$$
as left $A(V)$-modules, and
$$
\bar{M}(M^1\odot M^2)\cong M^1\bt_{P(1)} M^2
$$
as $V$-modules.
\end{prop}

The multiplicities of the simple $V$-modules in the $P(z)$-tensor product are given by the fusion rules. The
following fact was proved by Huang--Lepowsky \cite{HL95,H05}:

\begin{lm}\cite[Proposition 4.13]{HL95}\label{lm6.4}
	Let $V$ be a rational VOA such that the fusion rules among irreducible modules are all finite, and let $M^1,M^2$ be irreducible $V$-modules. Then the $P(z)$-tensor product exists and
\begin{equation}\label{fusiontensor}
    M^1\bt_{P(z)} M^2\cong \bigoplus_{W\in \mathscr{W}}N\fusion{M^1}{M^2}{W}\, W
\end{equation}
	as a $V$-module for any $z\in \P^1\bs\{\infty,0\}$.
\end{lm}

\begin{df}\cite{V88,MS89,Z96,DLM00}
A VOA $V$ is called {\em strongly rational} if $V$ is of CFT-type, rational, $C_2$-cofinite, simple, and self-dual. In this case, $V$ has finitely many irreducible modules, and the fusion rules among irreducible modules are all finite.

The tensor product~\eqref{fusiontensor} of irreducible modules over strongly rational VOAs is also referred to as the {\em fusion tensor product}.
\end{df}

 The associativity of the fusion tensor product is equivalent to the associativity of fusion rules:
\begin{equation}
    \sum_{W\in \mathscr{W}}N\fusion{M^1}{M^2}{W}\cdot N\fusion{W}{M^3}{M^4}
    =\sum_{W\in \mathscr{W}} N\fusion{M^2}{M^3}{W}\cdot N\fusion{M^1}{W}{M^4}.
\end{equation}
For strongly rational VOAs, associativity can be proved using the vector bundle of VOA-conformal blocks $\mathscr{C}(V;M^\bullet)$ on the moduli space $\overline{\mathcal{M}}_{0,4}$ \cite{KL93,DGT21,GL25}. See also \cite{HL95,H05} for a proof using compositions of intertwining operators.

In particular, under the assumptions of Lemma~\ref{lm6.4}, the bottom degree of $M^1\bt_{P(z)}M^2$ satisfies
\begin{equation}\label{Omtensor}
\Om(M^1\bt_{P(z)}M^2)=\bigoplus_{W\in \mathscr{W}}N\fusion{M^1}{M^2}{W}\, \Om(W)    
\end{equation}
as a left $A(V)$-module. Moreover, since $(\bar{M}(-)\dashv \Om(-)):\Mod(A(V))\rightleftarrows \mathsf{Adm}(V)$ is an adjoint equivalence of categories, the fusion tensor structure on $\mathsf{Adm}(V)$ can be pulled back to a fusion tensor structure on $\Mod(A(V))$.

\begin{thm}\label{coro:bottomdegree}
 Let $V$ be a strongly rational VOA. The fusion tensor product of two irreducible $A(V)$-modules $S,T$ is given by
	\begin{equation}\label{corotensor}
		S\bt T\cong\bar{M}(S)\odot \bar{M}(T).
	\end{equation}
\end{thm}
\begin{proof}
	Since $A(V)$ is semisimple and $\{\Om(W):W\in \mathscr{W}\}$ is the complete list of irreducible left $A(V)$-modules, it follows from \eqref{eq:fusion} and Schur's lemma that
	$$
	M^1\odot M^2
	=\bigoplus_{W\in \mathscr{W}}
	\dim_\C \Hom_{A(V)}(M^1\odot M^2,\Om(W))\cdot \Om(W)
	=\bigoplus_{W\in \mathscr{W}} N\fusion{M^1}{M^2}{W}\, \Om(W).
	$$
	Now $\Om(M^1\bt_{P(z)}M^2)\cong M^1\odot M^2$ follows from Lemma~\ref{lm6.4}. It is clear from the construction that the functor $\bar{M}(-)$ preserves direct sums of $V$-modules. Hence,
	$$
	\bar{M}(M^1\odot M^2)
	\cong \bigoplus_{W\in \mathscr{W}} N\fusion{M^1}{M^2}{W}\bar{M}(\Om(W))
	\cong M^1\bt_{P(z)} M^2.
	$$
	Under the adjoint equivalence $(\bar{M}(-)\dashv \Om(-)):\Mod(A(V))\rightleftarrows \mathsf{Adm}(V)$, the pull-back of the $P(z)$-tensor product from $\mathsf{Adm}(V)$ to $\Mod(A(V))$ is given by
	$$
	S \bt T=\Om(\bar{M}(S)\bt_{P(z)}\bar{M}(T))\cong \bar{M}(S)\odot \bar{M}(T),
	$$
    in view of the fact that $\Om(M^1\bt_{P(z)}M^2)\cong M^1\odot M^2$. 
\end{proof}

By Theorem~\ref{thm:AVaction}, Theorem~\ref{thm:fusion}, and Theorem~\ref{coro:bottomdegree}, we obtain the following universal property for the fusion tensor product on $\Mod(A(V))$.

\begin{prop}
	Let $V$ be a strongly rational VOA.
	Let $S^1,S^2$, and $S^3$ be irreducible $A(V)$-modules, whose corresponding irreducible admissible $V$-modules under the functor $\bar{M}(-): \Mod(A(V))\ra \mathsf{Adm}(V)$ are denoted by $M^1,M^2$, and $M^3$, respectively. Then
	\begin{equation}
		\Hom_{A(V)}(S^1\bt S^2,S^3)
		\cong I\fusion{M^1}{M^2}{M^3}
		\cong \Hom_{V}(M^1\bt_{P(z)}M^2, M^3),
	\end{equation}
	where $S^1\bt S^2$ is given by \eqref{corotensor}. 
\end{prop}
 The following corollary follows immediately from the corresponding properties of fusion rules in \cite[Section 5]{FHL93}.

\begin{coro}\label{coro6.9}
	Let $V$ be a strongly rational VOA. Denote the contragredient left $A(V)$-module $S^\ast$ by $S^\vee$. Then we have the following isomorphisms of left $A(V)$-modules:
\begin{enumerate}
	\item $S\bt T\cong T\bt S$.
	\item Denote $\Om(V)$ by $\vac$. Then $\vac\bt S\cong S\cong S\bt \vac$, that is, $\vac$ is the tensor identity.
	\item $\vac$ is an $A(V)$-submodule of $S\bt S^\vee$.
\end{enumerate}
\end{coro}

\begin{proof}
	By Theorem~\ref{coro:bottomdegree} and \eqref{Omtensor}, we have
	$$
	S\bt T\cong \bigoplus_{W\in \mathscr{W}} N\fusion{\bar{M}(S)}{\bar{M}(T)}{W}\cdot \Om(W).
	$$
	Now, (1) follows from the fact that
	$N\fusion{M^1}{M^2}{M^3}=N\fusion{M^2}{M^1}{M^3}$;
	(2) follows from the fact that
	$N\fusion{V}{M}{W}=\delta_{M,W}=N\fusion{M}{V}{W}$;
	and (3) follows from the fact that $V$ is self-dual and
	$N\fusion{M}{M'}{V}=N\fusion{M}{V'}{M''}=N\fusion{V}{M}{M}=1$,
	where $M^1,M^2,M^3,M,W\in \mathscr{W}$.
\end{proof}

\begin{remark}
The explicit form of the isomorphism
\[
B:S\bt T=\bar{M}(S)\odot \bar{M}(T)\xrightarrow{\simeq}\bar{M}(T)\odot \bar{M}(S)=T\bt S
\]
is the braiding isomorphism in the category $\Mod(A(V))$. This can be derived from the isomorphism
$I\fusion{M^1}{M^2}{M^3}\xrightarrow{\simeq} I\fusion{M^2}{M^1}{M^3}$
using the skew-symmetry formula \cite[eq.~(5.4.33)]{FHL93}, together with Proposition~\ref{prop:IOconformalblocks}. We will provide full details in a subsequent work.
\end{remark}


\section{Examples of the tensor products of $A(V)$-modules}\label{sec:7}
We determine the contracted tensor product $M^1\odot M^2$ for irreducible modules in several examples, including the universal affine VOA $V^k(\g)$, the level-one type-$A_1$ affine VOA $L_1(\mathfrak{sl}_2)$, the universal Virasoro VOA $\bar{V}(c,0)$, and the Ising model Virasoro VOA $L(\tfrac{1}{2},0)$.
		
		\subsection{ $M^1\odot M^2$ for the universal affine and universal Virasoro VOAs}\label{Sec:6}
		We first examine the contracted tensor product $M^1\odot M^2$ for some irrational VOAs. It turns out that the left $A(V)$-module $A(M^1)\o_{A(V)}\Om(M^2)$ may or may not be isomorphic to $ M^1\odot M^2$, unlike the rational case in Corollary~\ref{coro:IOtensor}.

		
	\subsubsection{Examples in the Heisenberg and universal affine VOAs} 
		
Let $V=V^k(\g)$ be the Heisenberg or universal affine VOA of level $k\in \C$, where $\g$ is an abelian or simple Lie algebra, respectively. Recall that $A(V)\cong U(\g)$ \cite{FZ92}. Let $U$ be an irreducible $A(V)$-module. The {\em generalized Verma module associated to $U$} \cite{DLM98,LL04} is given by the induced module
\begin{equation}
	V^k(\g, U)=U(\hat{\g})\o _{U(\hat{\g}_{\geq 0})}U\cong U(\hat{\g}_{-})\o _{\C} U,
\end{equation}
where $K.u=ku$ for some fixed $k\in \C$, $a(0).u=a.u$, and $a(n).u=0$ for any $a\in \g$, $n>0$, and $u\in U$. It was proved in \cite{FZ92,LL04} that $V^k(\g,U)$ is a $V$-module, and it is the generalized Verma module $\bar{M}(U)$ of the VOA $V^k(\g)$; see Section~\ref{sec:2.1.5}.

Let $U=L(\la)$, which is the finite-dimensional irreducible $\g$-module associated to a dominant weight $\la\in P_+$ if $\g$ is simple, and is $\C e^\la$ if $\g$ is abelian. Denote the generalized Verma module $V^k(\g,L(\la))$ by $V^k(\g,\la)$. Note that $V^k(\g,\la)$ is an irreducible $V^k(\g)$-module if $\g$ is abelian. If $\g$ is simple, it admits a unique maximal submodule $W^k(\g,\la)$ and an irreducible quotient module $L_k(\g,\la)=V^k(\g,\la)/W^k(\g,\la)$. The bottom degree $\Om(L_k(\g,\la))$ is the $\g$-module $L(\la)$. 
		
\begin{prop}
	Let $V=V^k(\g)$ be the Heisenberg or universal affine VOA of level $k\in \C$. Assume $M^1=L_k(\g,\la)=V^k(\g,\la)$ and $M^2=L_k(\g,\mu)=V^k(\g,\mu)$ are irreducible generalized Verma modules such that $\Om(M^1)=L(\la)$ and $\Om(M^2)=L(\mu)$ are finite-dimensional. Then we have
	\begin{equation}\label{6.7}
		M^1\odot M^2\cong  A(M^1)\o_{A(V)} \Om(M^2)\cong L(\la)\o_\C L(\mu)
	\end{equation}
	as left $A(V)\cong U(\g)$-modules.
\end{prop}

\begin{proof}
	By \cite[Theorem 3.2.1]{FZ92} and our assumptions on $M^1$ and $M^2$,
	$$
	A(M^1)\o_{A(V)} \Om(M^2)
	\cong (\Om(M^1)\o_\C A(V))\o _{A(V)} \Om(M^2)
	\cong \Om(M^1)\o _\C \Om(M^2).
	$$
	First, we consider the case when $\g=\h$ is an abelian Lie algebra and $V^k(\h)=M(k)$ is the Heisenberg VOA. In this case, any generalized Verma module $V^k(\h,\la)$ is irreducible.

	Since $L(\la)=\C e^{\la}$, we have $A(M^1)\o_{A(V)} \Om(M^2)\cong \C e^\la\o \C e^\mu$, and it surjects onto $M^1\odot M^2$ by \eqref{eq:defpi}. Moreover, $M^1\odot M^2\neq 0$, since $\Hom_{L(V)_0}(M^1\odot M^2, \C e^{\la+\mu})\neq 0$ in view of \eqref{eq:fusion}. Hence
	$$
	A(M^1)\o_{A(V)} \Om(M^2)\cong \C e^\la\o \C e^\mu\cong M^1\odot M^2.
	$$
			
	Next, we consider the case when $\g$ is a semisimple Lie algebra and $V=V^k(\g)$ is the universal affine VOA. Since $A(M^1)\o_{A(V)} \Om(M^2)$ is finite-dimensional and surjects onto $M^1\odot M^2$, by Weyl's complete reducibility theorem, both $A(M^1)\o_{A(V)} \Om(M^2)$ and $M^1\odot M^2$ are direct sums of finite-dimensional irreducible $\g$-modules.
			
	Let $U$ be a finite-dimensional irreducible $A(V^k(\g))\cong U(\g)$-module. Then, by \eqref{eq:homspaceequality} and Theorem~\ref{thm:fusion}, we have
	$$
	\Hom_{U(\g)}(M^1\odot M^2,U)
	\cong \Hom_{L(V)_0}(M^1\odot M^2,U)
	\cong I\fusion{M^1}{M^2}{(\bar{M}(U^\ast))'}.
	$$
	On the other hand, $M^2=V^k(\g,\mu)$ is also a generalized Verma module as a module over the VOA $V_{\hat{\g}}(k,0)$. Then, by \cite[Theorem 4.20]{Liu23} and \cite[Proposition 6.3]{GLZ23}, we have
	$$
	\Hom_{U(\g)}(A(M^1)\o_{A(V)}\Om(M^2), U)
	\cong I\fusion{M^1}{M^2}{(\bar{M}(U^\ast))'}.
	$$
	Thus, the multiplicities of the irreducible $\g$-module $U$ in $M^1\odot M^2$ and $A(M^1)\o_{A(V)}\Om(M^2)$ are the same. Hence,
	$
	A(M^1)\o_{A(V)} \Om(M^2)\cong M^1\odot M^2.
	$
\end{proof}

\begin{remark}
	If $M^1=L_k(\g,\la)$ and $M^2=L_k(\g,\mu)$ are irreducible but not generalized Verma modules, then $\Om(M^1)$ and $\Om(M^2)$ are not necessarily the irreducible $\g$-modules $L(\la)$ and $L(\mu)$ because of the existence of singular vectors in Verma modules. Hence, we do not have the identification \eqref{6.7}. Examples will be given in the next subsection.
\end{remark}




		\subsubsection{Examples in the Virasoro VOAs and Li's example}\label{Sec:5.4.2}
Let $V=M_c$ be the (universal) Virasoro VOA \cite{FZ92} of central charge $c$. Li gave an example in \cite[Section 2]{Li99} showing that the Frenkel--Zhu fusion rules theorem~\ref{eq:fusionrulestheorem} does not hold if $M^2$ and $M^3$ are not generalized Verma modules.
		
Let $M(c, h)$ be the Verma module over the Virasoro algebra of highest weight $h$ and central charge $c$. Recall that $M_c=M(c,0)/\<L(-1)v_{c,0}\>$, where $v_{c,0}$ is the highest-weight vector. Let $c$ be chosen in such a way that the VOA $M_c$ is simple. Then $\Om(M_c)=M_c(0)$. Any Virasoro Verma module $M(c,h)$ is a module over the VOA $M_c$. Li noticed that if $h_1\neq h_2$, then
$$
N\fusion{M(c,h_1)}{M_c}{M(c,h_2)}=0,
\quad \mathrm{but} \quad
\dim \Hom_{A(M_c)}(A(M(c,h_1))\o_{A(M_c)} \Om(M_c), \Om(M(c,h_2))=1.
$$
		
This is due to the fact that $M^2=M_c$, as a module over itself, is not a generalized Verma module. However, the formula in Theorem~\ref{thm:fusion} holds for this example since $M^3=M(c,h_2)$ is a generalized Verma module. We can also see this from the following proposition. 
		
\begin{prop}\label{prop;vir}
	The contracted tensor product $M(c,h_1)\odot M_c$ is spanned by the vector $v_{c,h_1}\odot \overline{v_{c,0}}$, with the $A(M_c)$ (or $L(M_c)_0$)-module action
	\begin{equation}\label{eq7.3}
		[\om].(v_{c,h_1}\odot  \overline{v_{c,0}})= \om_{[\wt \om -1]}.(v_{c,h_1}\odot  \overline{v_{c,0}})
		= h_1 \cdot (v_{c,h_1}\odot  \overline{v_{c,0}}).
	\end{equation}
	In particular, we have
	$\Hom_{A(M_c)}(M(c,h_1)\odot M_c, \Om(M(c,h_2))=0
	= I\fusion{M(c,h_1)}{M_c}{M(c,h_2)}$.
\end{prop}

\begin{proof}
	By Proposition~\ref{lm:spannodot},
	$$
	M(c,h_1)\odot M_c
	=\spn\{L(-n_1)\ds L(-n_r)v_{c,h_1}\odot  \overline{v_{c,0}}
	: n_1\geq \ds\geq n_r\geq 1 \}.
	$$
	Given $v_1\in M(c,h_1)$, by \eqref{eq:OM1odot} and \eqref{eq: actionspanning1} we have
	\begin{align*}
		&\left(L(-n-3)+2L(-n-2)+L(-n-1)\right)v_1\odot  \overline{v_{c,0}}=0,
		\quad n\geq 0,\\
		&L(-2)v_{1}\odot \overline{v_{c,0}}
		+L(-1)v_1\odot \overline{v_{c,0}}
		= v_1\odot L(0)\overline{v_{c,0}}=0.
	\end{align*}
	Hence $M(c,h_1)\odot M_c$ is spanned by elements of the form
	$L(-1)^m v_{c,h_1}\odot \overline{v_{c,0}}$, with $m\geq 0$. Moreover, applying \eqref{eq: actionspanning2} to $a=\om$ and $k=2$, we have
	$$
	L(-1)v_1\odot \overline{v_{c,0}}
	=-v_1\odot L(-1)\overline{v_{c,0}}
	=-v_1\odot \overline{L(-1)v_{c,0}}=0.
	$$
	This shows that
	$M(c,h_1)\odot M_c=\spn\{v_{c,h_1}\odot \overline{v_{c,0}}\}$.
	Finally, by \eqref{eq: L(V)0action} we have
	$$
	\om_{[\wt \om-1]}.(v_{c,h_1}\odot  \overline{v_{c,0}})
	= L(-1)v_{c,h_1}\odot  \overline{v_{c,0}}
	+L(0)v_{c,h_1}\odot  \overline{v_{c,0}}
	+v_{c,h_1}\odot  L(0)\overline{v_{c,0}}
	= h_1 \cdot (v_{c,h_1}\odot  \overline{v_{c,0}}).
	$$
	Since $\om_{[\wt \om-1]}.v_{c,h_2}=h_2 \cdot v_{c,h_2}$, we have
	$\Hom_{A(M_c)}(M(c,h_1)\odot M_c, \Om(M(c,h_2))=0$.
\end{proof}
		
\begin{remark}
	It was proved by Li that
	$A(M(c,h_1))\o_{A(M_c)}M_c(0)\cong \C[t_1]$; see \cite[Section 2]{Li99}. Proposition~\ref{prop;vir} shows that $A(M^1)\o_{A(V)}\Om(M^2)$ is {\em not} isomorphic to $M^1\odot M^2$ in general if $V$ is not a rational VOA. Moreover, in view of Corollary~\ref{corc:comparison}, we also have the sharp estimate
	$$
	\dim \Hom_{L(M_c)_0}(M(c,h_1)\odot M_c, \C v_{c,h_2})
	<
	\dim \Hom_{A(M_c)}(A(M(c,h_1))\o_{A(M_c)} M_c(0), \C v_{c,h_2}),
	$$
	Thus, the formula~\eqref{eq:fusion} holds under more general assumptions than the Frenkel--Zhu fusion rules theorem~\eqref{eq:fusionrulestheorem}, in the sense that we only require $(M^3)'$ to be a generalized Verma module. 
\end{remark}

\subsection{Fusion tensor product for the level-one type-$A_1$ affine VOA $L_1(\mathfrak{sl}_2)$}	

\subsubsection{Basics of the affine VOA $L_1(\sl_2)$}
	Consider the level-one type $A_1$ affine VOA $V=L_1(\mathfrak{sl}_2)$ \cite{FZ92}. This VOA is strongly rational \cite{DLM97}, with two irreducible modules
\begin{align*}
    V=L_1(\mathfrak{sl}_2)\quad \mathrm{and}\quad 
    W=L_1(\sl_2,\al/2). 
\end{align*}
According to the fusion rules in the WZNW model \cite{V88,Bea96}, the fusion tensor products of irreducible modules in the category $\adm(L_1(\sl_2))$ are given as follows:
\begin{equation}\label{affinefusion}
	V\bt V=V,\quad V\bt W=W,\quad W\bt W=V.
\end{equation}
		
The Zhu algebra of $L_1(\sl_2)$ can be identified as
\[
A(L_1(\mathfrak{sl}_2))\cong U(\sl_2)/\<e^2\>.
\]
This is a $5$-dimensional semisimple algebra, where $\<e^2\>$ is the two-sided ideal of $U(\sl_2)$ generated by $e^2$ \cite{FZ92}. 
One can easily show that $A(L_1(\mathfrak{sl}_2))$ is spanned by $\{1,e,f,h,h^2\}$, subject to the relations
\begin{equation}\label{3.26}
	eh+e=0;\quad h^2-h-2fe=0;\quad fh-f=0;\quad e^2=f^2=0,
\end{equation}
see \cite[Section 6]{Liu25}. The bottom degrees of $V$ and $W$ give all irreducible modules over $A(L_1(\sl_2))$ up to isomorphism:
\begin{align*}
	&S=\Om(V)=\C \vac,\quad \mathrm{with}\quad e.\vac=f.\vac=h.\vac=0,\\
	&T=\Om(W)=\C e^{+}\op \C e^{-},\quad \mathrm{with}\quad e.e^{+}=0,\ f.e^{+}=e^{-},\ h.e^{\pm }=\pm e^{\pm }.
\end{align*}
The relations in $S$ and $T$ match the relations \eqref{3.26} in $A(L_1(\sl_2))$. Moreover, $V$ and $W$ have the following description
\begin{align*}
    V&=L_1(\mathfrak{sl}_2)=\left(U(\widehat{\sl_2})\o_{U(\widehat{\sl_2})_{\geq 0}}S\right)/\<e(-1)^{2}\o \vac\>,\\
    W&=L_1(\sl_2,\al/2)=\left(U(\widehat{\sl_2})\o_{U(\widehat{\sl_2})_{\geq 0}}T\right)/\<e(-1)\o e^+\>.
\end{align*}

Since $(\bar{M}(-)\dashv \Om(-)):\Mod(A(L_1(\sl_2)))\rightleftarrows \mathsf{Adm}(L_1(\sl_2))$ is an equivalence of categories,
\begin{align*}
	\bar{M}(S)&\cong V=\spn\{a^1(-n_1)\ds a^r(-n_r)\vac: a^i\in \sl_2,\ n_1\geq \ds\geq n_r\geq 0,\ e(-1)^2\vac=0 \},\\
	\bar{M}(T)&\cong W=\spn\{b^1(-m_1)\ds b^k(-m_k)e^\pm: b^j\in \sl_2,\ m_1\geq \ds\geq m_k\geq 0,\ e(-1)e^+=0\}.
\end{align*}

\subsubsection{Fusion tensor product on $\Mod(A(L_1(\sl_2)))$}
We want to verify that our $\odot$-tensor product, together with \eqref{corotensor}, gives rise to the correct fusion tensor product \eqref{affinefusion} in the category $\Mod(A(L_1(\sl_2)))$. In other words,
\begin{equation}\label{A1fusiontensor}
	S\boxtimes S=S,\quad S\bt T=T,\quad T\bt T=S.
\end{equation}
We only prove $T\bt T=W\odot W\cong S$. The other cases are similar, and we omit the details.

		\begin{lm}
	As a vector space, $W\odot W$ is spanned by $\al \odot \b$, where $\al,\b\in T=\C e^+\op \C e^-$. 
\end{lm}

\begin{proof}
This lemma is an easy consequence of Lemma~\ref{lm:spannodot}. For the sake of completeness, we write out the full details of the proof.

	Let $a\in \sl_2=V_1$. Using \eqref{eq:actionfogeneralspann}, with $s=1$ and $t=n\geq 1$, we have
	\begin{equation}\label{addrel1}
		a(-n)u\odot v
		=-u\odot \sum_{j\geq 0} \binom{-n}{j}(-1)^{n+j} a(j)v,
		\quad u,v\in W. 
	\end{equation}
	In particular, we have
	\begin{align*}
		&a^1(-n_1)\ds a^r(-n_r) \al\odot b^1(-m_1)\ds b^k(-m_k)\b\\
		&=-a^2(-n_2)\ds a^r(-n_r) \al\odot 
		\sum_{j\geq 0} \binom{-n_1}{j}(-1)^{n_1+j} a^1(j) b^1(-m_1)\ds  b^k(-m_k)\b\\
		&\vdots\\
		&=\al\odot \sum \la\cdot  a^r(j_r)\ds a^1(j_1)b^1(-m_1)\ds b^k(-m_k)\b.
	\end{align*}
	On the other hand, using \eqref{eq: actionspanning2} with $k=1+n$, we have
	\begin{equation}\label{addrel2}
		\sum_{j\geq 0}\binom{-n}{j} a(j)u\odot v=-u\odot a(-n)v,
		\quad u,v\in W.
	\end{equation}
	For any $\al\in T$, we have $a(0)\al=a.\al\in T$ since $T$ is an $\sl_2$-module, and $a(j).\al=0$ if $j>0$. Then by \eqref{addrel2}, for any $\al,\b\in T$, $n_1\geq \ds \geq n_r\geq 1$, and $c^1,\ds, c^r\in \sl_2$, we have
	\begin{align*}
		&\al\odot c^1(-n_1)\ds c^r(-n_r)\b\\
		&=-\sum_{j\geq 0} \binom{-n_1}{j} c^1(j)\al\odot  c^2(-n_2)\ds c^r(-n_r)\b\\
		&=-c^1.\al\odot  c^2(-n_2)\ds c^r(-n_r)\b\\
		&\vdots\\
		&=(-1)^r (c^r\ds c^1).\al\odot \b\in T\odot T,
	\end{align*}
	with $(c^r\ds c^1).\al\in T$. 
	Hence $W\odot W$ is spanned by
	$\{\al\odot \b: \al,\b\in T=\C e^+\op \C e^-\}$.
\end{proof}

\begin{prop}\label{prop:affinelattice}
	The following relations hold in $W\odot W$:
	$$
	e^+\odot e^+=e^-\odot e^-=0,\quad
	e^-\odot e^++e^+\odot e^-=0.
	$$
	In particular,
	$$
	T\bt T=W\odot W=\C (e^-\odot e^+)\cong S
	$$
	as a left $A(L_1(\sl_2))$-module.
\end{prop}
\begin{proof}
	Note that $e(-1).e^+=0$ in $W$. By \eqref{addrel1} with $a=e$ and $n=1$, we have
	$$
	0=e(-1).e^+\odot e^-
	=-e^+\odot \sum_{j\geq 0}\binom{-1}{j} (-1)^{1+j} e(j).e^-
	=e^+\odot e^+.
	$$
	Now apply $f\in A(L_1(\sl_2))$ to $e^+\odot e^+$. Note that $f.e^+=e^-$ in $T$. Then by \eqref{eq:AVactiononodot}, we have
	$$
	0=f.(e^+\odot e^+)
	=f.e^+\odot e^+ + e^+\odot f.e^+
	=e^-\odot e^+ + e^+\odot e^-.
	$$
	Applying $f$ once more to the equation
	$e^-\odot e^+ + e^+\odot e^-=0$, we obtain $e^-\odot e^-=0$.
			
	Therefore,
	$$
	T\bt T=\spn\{e^{\pm}\odot e^{\pm}\}
	=\spn\{e^+\odot e^-=-e^-\odot e^+\}.
	$$
	It is easy to check that
	$e.(e^+\odot e^-)=f.(e^+\odot e^-)=h.(e^+\odot e^-)=0$.
	Furthermore, since $W\cong W'$ as an $L_1(\sl_2)$-module, we have
	$T\cong T^\vee$ as a left $A(L_1(\sl_2))$-module. Then by Corollary~\ref{coro6.9}, $\vac\subset T\bt T^\vee$.
	Hence $T\bt T\neq 0$, and it is isomorphic to $S$ as an $A(L_1(\sl_2))$-module.
\end{proof}

\subsection{Fusion tensor product for the critical Ising model Virasoro VOA $L(\frac{1}{2},0)$}	

\subsubsection{Basics of the Virasoro VOA $L(\frac{1}{2},0)$}
According to \cite{W93,DMZ94,DLM00}, the VOA $V=L\left(\tfrac{1}{2},0\right)$ is a strongly rational VOA, with three irreducible modules
\begin{equation}\label{eq:irrvir}
	L\left(\tfrac{1}{2},0\right),\quad
	L\left(\tfrac{1}{2},\tfrac{1}{2}\right),\quad
	L\left(\tfrac{1}{2},\tfrac{1}{16}\right).
\end{equation}
The corresponding bottom degrees are all one-dimensional.

The fusion tensor products among these irreducible modules are standard in the critical Ising model conformal field theory \cite{V88,MS89}:
\begin{equation}\label{eq:vir}
	\begin{aligned}
		&L\left(\tfrac{1}{2},0\right)\boxtimes W=W,\quad \forall\, W\in \mathscr{W},\\
		&L\left(\tfrac{1}{2},\tfrac{1}{2}\right)\boxtimes L\left(\tfrac{1}{2},\tfrac{1}{2}\right)=L\left(\tfrac{1}{2},0\right),\\
		&L\left(\tfrac{1}{2},\tfrac{1}{2}\right)\boxtimes L\left(\tfrac{1}{2},\tfrac{1}{16}\right)=L\left(\tfrac{1}{2},\tfrac{1}{16}\right),\\
		&L\left(\tfrac{1}{2},\tfrac{1}{16}\right)\boxtimes L\left(\tfrac{1}{2},\tfrac{1}{16}\right)
		= L\left(\tfrac{1}{2},0\right)+L\left(\tfrac{1}{2},\tfrac{1}{2}\right).
	\end{aligned}
\end{equation}

According to \cite{FF88}, the irreducible Virasoro algebra modules admit the following quotient descriptions. Denote the highest-weight vector $v_{c,0}$ by $\vac$:
\begin{align}
	L\left(\tfrac{1}{2},0\right)
	&=M\left(\tfrac{1}{2},0\right)/\<L(-1)\vac,\ v_s\>,\\
	L\left(\tfrac{1}{2},\tfrac{1}{2}\right)
	&=M\left(\tfrac{1}{2},\tfrac{1}{2}\right)/\<\bigl(L(-2)-\tfrac{3}{4}L(-1)^2\bigr)u\>,
	\quad u=v_{1/2,1/2}, \label{rel1/2} \\
	L\left(\tfrac{1}{2},\tfrac{1}{16}\right)
	&=M\left(\tfrac{1}{2},\tfrac{1}{16}\right)/\<\bigl(L(-2)-\tfrac{4}{3}L(-1)^2\bigr)v\>,
	\quad v=v_{1/2,1/16}, \label{rel1/16}
\end{align}
where
$v_s=(64L(-2)^3+93L(-3)^2-264L(-4)L(-2)-108L(-6))\vac$.

Moreover, the Zhu algebra $A(L\left(\tfrac{1}{2},0\right))$ admits the following description \cite{W93,DMZ94}:
\begin{equation}\label{viaA}
	A(L\left(\tfrac{1}{2},0\right))
	\cong \C[x]/\<x(x-\tfrac{1}{2})(x-\tfrac{1}{16})\>,
	\quad [\om=L(-2)\vac]\mapsto \bar{x}.
\end{equation}
The bottom-degree irreducible $A(L\left(\tfrac{1}{2},0\right))$-modules corresponding to these irreducible
$L\left(\tfrac{1}{2},0\right)$-modules are
\begin{align*}
	T(0)&=\Om(L\left(\tfrac{1}{2},0\right))=\C\vac,
	&&\bar{x}.\vac=0,\\
	T(\tfrac{1}{2})&=\Om(L\left(\tfrac{1}{2},\tfrac{1}{2}\right))=\C u,
	&&\bar{x}.u=\tfrac{1}{2}\cdot u,\\
	T(\tfrac{1}{16})&=\Om(L\left(\tfrac{1}{2},\tfrac{1}{16}\right))=\C v,
	&&\bar{x}.v=\tfrac{1}{16}\cdot v.
\end{align*}

\subsubsection{Fusion tensor product on $\Mod(A(L\left(\frac{1}{2},0\right))$}
Similar to the previous subsection, we verify that our $\odot$-tensor product, together with \eqref{corotensor}, gives rise to the correct fusion tensor product \eqref{affinefusion} in the category $\Mod(A(L\left(\frac{1}{2},0\right))$. In other words, we show that
\begin{align*}
	&T(\tfrac{1}{2})\bt T(\tfrac{1}{2})=T(0),\\
	&T(\tfrac{1}{2})\bt T(\tfrac{1}{16})=T(\tfrac{1}{16}),\numberthis\label{AIsingtensor}\\
	&T(\tfrac{1}{16})\bt T(\tfrac{1}{16})=T(0)+T(\tfrac{1}{2}).
\end{align*}

\begin{lm}
	Let $a,b\in\{\tfrac{1}{2},\tfrac{1}{16}\}$. Then
	\begin{equation}\label{virspan}
		T(a)\bt T(b)
		=L\left(\tfrac{1}{2},a\right)\odot L\left(\tfrac{1}{2},b\right)
		=\spn\{u\odot v,\ L(-1)u\odot v\},
	\end{equation}
	where $u$ and $v$ are the highest-weight vectors of
	$L\left(\tfrac{1}{2},a\right)$ and $L\left(\tfrac{1}{2},b\right)$, respectively.

	The action of $A(L\left(\frac{1}{2},0\right))=\C[\bar{x}]$ on $T(a)\bt T(b)$ is given by
	\begin{equation}\label{viraction}
		\bar{x}.(u_1\odot v_1)
		=L(-1)u_1\odot v_1+L(0)u_1\odot v_1+u_1\odot L(0)v_1,
	\end{equation}
	where $u_1\in L\left(\tfrac{1}{2},a\right)$ and
	$v_1\in L\left(\tfrac{1}{2},b\right)$; see also \eqref{eq7.3}.
\end{lm}

\begin{proof}
	By \eqref{eq:M1odotM2spann1}, we have
	\begin{equation}\label{spanvir}
		T(a)\bt T(b)=\spn\{u_1\odot v: u_1\in L\left(\tfrac{1}{2},b\right)\}.
	\end{equation}
	We first list several relations arising from \eqref{eq: actionspanning1} and \eqref{eq: actionspanning2}:
	\begin{align}
		&\left(L(-n-3)+2L(-n-2)+L(-n-1)\right)u_1\odot v=0,
		\quad n\geq 0,\ u_1\in L\left(\tfrac{1}{2},a\right), \label{eq:virrel1}\\
		&L(-2)u_1\odot v+L(-1)u_1\odot v
		=u_1\odot L(0)v=b\cdot u_1\odot v,
		\quad u_1\in L\left(\tfrac{1}{2},a\right), \label{eq:virrel2}\\
		&L(-1)u_1\odot v_1+u_1\odot L(-1)v_1=0,
		\quad u_1\in L\left(\tfrac{1}{2},a\right),\ v_1\in L\left(\tfrac{1}{2},b\right). \label{eq:virrel3}
	\end{align}
	The first relation arises from the action of
	$\om\o \frac{z^{\wt \om}}{(z-1)^2}$,
	the second from the action of
	$\om\o \frac{z^{\wt \om-1}}{z-1}$,
	and the third from the action of
	$\om\o (z-1)^{\wt \om -2}$;
	see \eqref{eq:spannideal}.

	It follows from \eqref{spanvir} and \eqref{eq:virrel1} that
	$$
	T(a)\bt T(b)=\spn\{L(-1)^k u\odot v: k\geq 0\}.
	$$
	Moreover, by \eqref{rel1/2} and \eqref{rel1/16}, we have
	$L(-2)u=\la\cdot L(-1)^2u$, where $\la=3/4$ or $4/3$.
	Then \eqref{eq:virrel2} implies
	$$
	T(a)\bt T(b)=\spn\{L(-1)u\odot v,\ u\odot v\}.
	$$
	Finally, for the generator $\bar{x}\in A(L\left(\frac{1}{2},0\right))$,
	by \eqref{viaA} and \eqref{eq:AVactiononodot}, we compute
	\begin{align*}
		\bar{x}.(u_1\odot v_1)
		&=[\om].(u_1\odot v_1)
		=\sum_{j\geq 0}\binom{\wt \om-1}{j} \om_{[j]}u_1\odot v_1
		+u_1\odot \om_{[\wt \om-1]}v_1\\
		&=\om_{[0]}u_1\odot v_1+\om_{[1]}u_1\odot v_1+u_1\odot \om_{[1]}v_1\\
		&=L(-1)u_1\odot v_1+L(0)u_1\odot v_1+u_1\odot L(0)v_1.
	\end{align*}
	This proves \eqref{viraction}.
\end{proof}

	\begin{prop}\label{prop:Ising}
	Write $u=v_{\tfrac{1}{2},\tfrac{1}{2}}$ and $v=v_{\tfrac{1}{2},\tfrac{1}{16}}$ for the highest-weight vectors of
	$L\left(\tfrac{1}{2},\tfrac{1}{2}\right)$ and $L\left(\tfrac{1}{2},\tfrac{1}{16}\right)$, respectively. Then
	\begin{align}
		&L(-1)u\odot u+ u\odot u=0
		\quad \mathrm{in}\quad T(\tfrac{1}{2})\bt T(\tfrac{1}{2}), \label{dimonerel}\\
		&L(-1)u\odot v+ \tfrac{1}{2}u\odot v=0
		\quad \mathrm{in}\quad T(\tfrac{1}{2})\bt T(\tfrac{1}{16}). \label{dimonerel2}
	\end{align}
	It follows that
	$$
	T(\tfrac{1}{2})\bt T(\tfrac{1}{2})\cong T(0),\quad
	T(\tfrac{1}{2})\bt T(\tfrac{1}{16})\cong T(\tfrac{1}{16}),
	$$
	as left $A(L\left(\frac{1}{2},0\right))$-modules.
	On the other hand, $L(-1)v\odot v$ and $v\odot v$ are linearly independent in
	$T(\tfrac{1}{16})\bt T(\tfrac{1}{16})$, and
	\begin{equation}\label{Atensor1616}
		T(\tfrac{1}{16})\bt T(\tfrac{1}{16})\cong T(0)\op T(\tfrac{1}{2})
	\end{equation}
	as left $A(L\left(\frac{1}{2},0\right))$-modules.
\end{prop}

\begin{proof}
	We first show that \eqref{dimonerel} and \eqref{dimonerel2} arise from the relation
	$\bar{x}\cdot (\bar{x}-\tfrac{1}{2})\cdot (\bar{x}-\tfrac{1}{16})=0$
	in the Zhu algebra $A(L\left(\frac{1}{2},0\right))$.
	Indeed, let $u=v_{\tfrac{1}{2},\tfrac{1}{2}}$. Then by \eqref{viraction},
	\begin{align*}
		\bar{x}.(u\odot u)&=L(-1)u\odot u+u\odot u,\\
		(\bar{x}-\tfrac{1}{2}).(u\odot u)&=L(-1)u\odot u+\tfrac{1}{2} u\odot u,\\
		(\bar{x}-\tfrac{1}{16}). (u\odot u)&=L(-1)u\odot u+\tfrac{15}{16} u\odot u.
	\end{align*}
	On the other hand, using the fact that
	$L(-2)u=\tfrac{3}{4} L(-1)^2u$ in $L\left(\tfrac{1}{2},\tfrac{1}{2}\right)$
	and \eqref{eq:virrel2}, we obtain
	\begin{align*}
		\bar{x}.(L(-1)u\odot u)
		&=L(-1)^2u\odot u+L(0)L(-1)u\odot u+L(-1)u\odot L(0)u\\
		&=\tfrac{4}{3} L(-2)u\odot u+2L(-1)u\odot u\\
		&=\tfrac{4}{3}\bigl(-L(-1)u\odot u+u\odot L(0)u\bigr)+2L(-1)u\odot u\\
		&=\tfrac{2}{3}\bigl(L(-1)u\odot u+u\odot u\bigr).
	\end{align*}
	Therefore,
	\begin{align*}
		(\bar{x}-\tfrac{1}{2}).\bar{x}.(u\odot u)
		&=\tfrac{2}{3}\bigl(L(-1)u\odot u+u\odot u\bigr)
		+\tfrac{1}{2}\bigl(L(-1)u\odot u+u\odot u\bigr)\\
		&=\tfrac{7}{6}\bigl(L(-1)u\odot u+u\odot u\bigr).
	\end{align*}
	Furthermore,
	\begin{align*}
		0&=(\bar{x}-\tfrac{1}{16}). (\bar{x}-\tfrac{1}{2}).\bar{x}.(u\odot u)\\
		&=\tfrac{7}{6}\Bigl(
		\tfrac{2}{3}\bigl(L(-1)u\odot u+u\odot u\bigr)
		+\bigl(L(-1)u\odot u+u\odot u\bigr)
		-\tfrac{1}{16}\bigl(L(-1)u\odot u+u\odot u\bigr)
		\Bigr)\\
		&=\tfrac{539}{288}\bigl(L(-1)u\odot u+u\odot u\bigr).
	\end{align*}
	In particular, $L(-1)u\odot u+u\odot u=0$ in
	$T(\tfrac{1}{2})\bt T(\tfrac{1}{2})$, proving \eqref{dimonerel}.

Now we have $T(\tfrac{1}{2})\bt T(\tfrac{1}{2})=\C\,u\odot u$, and by \eqref{viraction},
\[
\bar{x}.(u\odot u)=L(-1)u\odot u+u\odot u=0.
\]
Moreover, since $T(\tfrac{1}{2})$ is self-dual, it follows from Corollary~\ref{coro6.9} (3) that
$T(\tfrac{1}{2})\bt T(\tfrac{1}{2})\neq 0$.
	Hence $T(\tfrac{1}{2})\bt T(\tfrac{1}{2})\cong T(0)$.

	A similar calculation shows that the following relation holds in
	$T(\tfrac{1}{2})\bt T(\tfrac{1}{16})$:
	$$
	0=(\bar{x}-\tfrac{1}{16}). (\bar{x}-\tfrac{1}{2}).\bar{x}. (u\odot v)
	=\tfrac{385}{2304}\bigl(L(-1)u\odot v+\tfrac{1}{2}u\odot v\bigr).
	$$
	This proves \eqref{dimonerel2}, and
	$T(\tfrac{1}{2})\bt T(\tfrac{1}{16})=\C\,u\odot v$.
	Moreover, by \eqref{viraction},
	\begin{align*}
		\bar{x}.(u\odot v)
		&=L(-1)u\odot v+L(0)u\odot v+u\odot L(0)v\\
		&=L(-1)u\odot v+\tfrac{1}{2}u\odot v+\tfrac{1}{16} u\odot v\\
		&=\tfrac{1}{16} u\odot v.
	\end{align*}
	This shows that
	$T(\tfrac{1}{2})\bt T(\tfrac{1}{16})\cong T(\tfrac{1}{16})$
	as an $A(L\left(\frac{1}{2},0\right))$-module.

	Finally, for $v=v_{\tfrac{1}{2},\tfrac{1}{16}}$, we have
	\begin{equation}\label{1/161/16}
		\begin{aligned}
			\bar{x}.(v\odot v)&=L(-1)v\odot v+\tfrac{1}{8} v\odot v,\\
			\bar{x}.(L(-1)v\odot v)&=\tfrac{3}{8} L(-1)v\odot v+\tfrac{3}{64} v\odot v.
		\end{aligned}
	\end{equation}
	In particular, it is easy to check that
	$$
	(\bar{x}-\tfrac{1}{2})\cdot \bar{x}.(v\odot v)=0.
	$$
	Hence the relation in $A(L\left(\frac{1}{2},0\right))$ does not give rise to new relations in
	$T(\tfrac{1}{16})\bt T(\tfrac{1}{16})$.
	Therefore, $L(-1)v\odot v$ and $v\odot v$ are linearly independent in
	$T(\tfrac{1}{16})\bt T(\tfrac{1}{16})$.
	Let
	$$
	\al=L(-1)v\odot v-\tfrac{3}{8} v\odot v,\quad
	\b=L(-1)v\odot v+\tfrac{1}{8} v\odot v.
	$$
	Then it follows from \eqref{1/161/16} that
	$$
	\bar{x}.\al=0,\quad \bar{x}.\b=\tfrac{1}{2}\b.
	$$
	Hence
	$$
	T(\tfrac{1}{16})\bt T(\tfrac{1}{16})
	=\C \al\op \C \b
	\cong T(0)\op T(\tfrac{1}{2})
	$$
	as an $A(L\left(\frac{1}{2},0\right))$-module, proving \eqref{Atensor1616}.
\end{proof}


		\section{Acknowledgements}
		I wish to thank Xu Gao, Angela Gibney, and Danny Krashen for their valuable discussions and useful comments on this paper. 
		
		\section{Statements and Declarations - Data Availability/Conflict of Interest}
		
		On behalf of all authors, the corresponding author states that there is no
		conflict of interest and data sharing is not applicable to this article as no datasets were generated or analysed during the current study.

	\end{document}